\theoremstyle{plain}
\newtheorem{theorem}{Theorem}[section]
\newtheorem{proposition}{Proposition}[section]
\newtheorem{corollary}{Corollary}[section]
\theoremstyle{definition}
\newtheorem{definition}{Definition}[section]
\newtheorem{remark}{Remark}[section]
\numberwithin{equation}{section}
\renewcommand\section{\@startsection{section}{1}{\z@}%
                                   {-5.5ex \@plus -1ex \@minus -.2ex}%
                                   {1.3ex \@plus.2ex}%
                                   {\normalfont\normalsize\bfseries}}
\renewcommand{\@seccntformat}[1]{\csname the#1\endcsname.\ }
\renewcommand\subsection{\@startsection{subsection}{2}{\z@}%
                                   {-2.5ex \@plus -1ex \@minus -.2ex}%
                                   {1.3ex \@plus.2ex}%
                                   {\normalfont\normalsize\it}}
\DeclareMathOperator{\epi}{epi}
\DeclareMathOperator{\dom}{dom}
\DeclareMathOperator{\inte}{int}
\DeclareMathOperator{\co}{co}
\DeclareMathOperator{\cone}{cone}
\DeclareMathOperator{\supp}{supp}
\DeclareMathOperator{\gph}{gph}
\DeclareMathOperator{\oco}{\overline{\co}}
\def\RR{\mathbb{R}}
\def\A{\mathcal{A}}
\def\F{\mathcal{F}}
\def\L{\mathcal{L}}
\def\U{\mathcal{U}}
\def\R{\mathbb{R}}
\def\la{\langle}
\def\ra{\rangle}
\def\ms{\medskip }
\def\barV0{\widetilde{\mathcal V}_0 } 
\begin{document}

\setlength{\abovedisplayskip}{3pt}
\setlength{\belowdisplayskip}{3pt}

\date{} 

\title{\large\bf  Duality for Robust Linear Infinite Programming Problems Revisited\footnote{This work is partly supported by the project 101.01-2018.310, NAFOSTED, Vietnam }}

\author{{ \normalsize N.~Dinh}\thanks{
International University, Vietnam National University - HCMC, Linh Trung
ward, Thu Duc district, Ho Chi Minh city, Vietnam ({\it ndinh@hcmiu.edu.vn}). Part of  the work  of this author was realized when he visited Center for General Education, China Medical University,  Taiwan. He expresses his sincere thanks  to the hospitality he  received.     } 
\and {\normalsize D.H.~Long}\thanks{VNUHCM - University of Science, District 5, Ho Chi Minh city, Vietnam, and  Tien Giang University, Tien Giang town, Vietnam  ({\it danghailong@tgu.edu.vn})}\and  {\normalsize J.-C. Yao}\thanks{Center for General Education, China Medical University, Taichung 40402, Taiwan ({\it yaojc@mail.cmu.edu.tw})}}

\maketitle

\centerline{\it Dedicated to Professor Marco Antonio L\' opez's 70$^{th}$ birthday}

\begin{abstract} In this paper, we consider  the robust linear infinite programming  problem $({\rm RLIP}_c) $ defined by   
\begin{eqnarray*}
({\rm RLIP}_c)\quad &&\inf\;  \langle c,x\rangle \\
\textrm{subject to } &&x\in X,\; \langle x^\ast,x \rangle \le r ,\;\forall (x^\ast,r)\in\mathcal{U}_t,\; \forall t\in T, 
\end{eqnarray*}
where $X$ is a locally convex Hausdorff topological vector space,  $T$ is an arbitrary (possible infinite) index set, $c\in X^*$,   and $\mathcal{U}_t\subset X^*\times \mathbb{R}$, $t \in T$ are    uncertainty sets.     

We  propose an  approach to duality for the robust linear problems with convex constraints  $({\rm RP}_c)$  and establish corresponding   robust strong duality and also, stable robust strong duality, i.e., robust strong duality holds "uniformly" with all $c \in X^\ast$. 
 With the different choices/ways of setting/arranging data   from  $({\rm RLIP}_c) $, one gets  back to the model  $({\rm RP}_c)$ and the (stable) robust strong duality for   $({\rm RP}_c)$ applies. By such a way, nine versions of   dual problems for $ ({\rm RLIP}_c)$  are proposed. Necessary and sufficient conditions for stable robust strong duality of these pairs of primal-dual problems are given, for which some  cover several known  results  in the literature while the others,  due to the best knowledge of the authors, are new. Moreover, as by-products, we  obtained  from the robust strong duality for variants pairs of primal-dual problems, several robust Farkas-type results for  linear infinite systems with uncertainty. 
 Lastly, as extensions/applications, we extend/apply  the results obtained to robust linear problems with sub-affine constraints, and to linear infinite problems (i.e., $({\rm RLIP}_c) $ with the absence of uncertainty). 
It is worth noticing even for these  cases, we are able to derive  new results on (robust/stable robust) duality for the mentioned classes of problems and new robust   Farkas-type results for  sub-linear systems, and also  for linear infinite systems  in the absence of  uncertainty.  
\end{abstract}

{\bf Key words}: Linear infinite programming problems, robust linear infinite problems, stable robust strong duality for robust linear infinite problems, Farkas-type results for infinite linear systems with uncertainty, Farkas-type results for sub-affine systems with uncertainty.

{\bf Mathematics Subject Classification:}  39B62, 49J52, 46N10, 90C31, 90C25.

\section{Introduction }

In this paper, we are concerned with the {\it linear infinite programming with uncertainty parameters}  of the form
 \begin{eqnarray*}
({\rm LIP}_c)\quad &&\inf\;  \langle c,x\rangle \\
\textrm{subject to} &&x\in X,\;\langle a_t,x \rangle \le b_t,\;\forall t\in T,
\end{eqnarray*}
where $X$ is a locally convex Hausdorff topological vector space,  $T$ is an arbitrary (possible infinite) index set, $c\in X^*$, $a_t \in  X^*$ and $b_t \in \mathbb{R}$   for each $t \in T$,  and the couple $(a_t , b_t )$ belongs to an uncertainty set $\mathcal{U}_t\subset X^*\times \mathbb{R}$. For such a  {\it linear infinite programming} ${\rm (LIP}_c)$ with {\it input-parameter uncertainty},  its robust  counterpart   is the robust linear infinite programing problem $({\rm RLIP}_c) $ defined as follows:   
\begin{eqnarray*}
({\rm RLIP}_c)\quad &&\inf\;  \langle c,x\rangle \\
\textrm{subject to  } &&x\in X,\; \langle x^\ast,x \rangle \le r ,\;\forall (x^\ast,r)\in\mathcal{U}_t,\; \forall t\in T.
\end{eqnarray*}

The   robust linear infinite  problems of the model  $({\rm RLIP}_c) $ together with their duality  were    considered in several works in the literature  such as, \cite{ChJeya17}, \cite{DGLM17-Optim}, \cite{DMVV17-Robust-SIAM}, \cite{GJLL},      \cite{JL11}.  There are   variants of    duality results for robust  convex problems (see  \cite{Bot}, \cite {CLLLY},  \cite{DGV-AMO}, \cite {DGV19a},     \cite{DGLV17-Robust}, \cite{FLY15},   \cite{DMVV17-Robust-SIAM},  \cite{JL2},       \cite {LJL11}  and references therein), and also for robust  vector optimization/multi-objective  problems (see, e.g., \cite{CKY19}, \cite{DGLM17-Optim}, \cite {DL18ACTA}, \cite{GJLb}). In the mentioned papers,  results for robust strong   duality are established for classes of problems from linear to convex, non-convex,  and vector problems, under various constraint qualification conditions  (or qualification conditions).

In this paper  we propose a way,  which can be considered as a unification approach to duality for the robust linear problems $({\rm RLIP}_c) $. Concretely, we propose some  model for a bit more general problem, namely, the robust linear problem with convex conical constraints  $({\rm RP}_c)$    and establish corresponding robust strong duality and also, stable robust strong duality, i.e., robust strong duality holds "uniformly" with all $c \in X^\ast$. 
Then, with the different choices/ways  of setting,   we transfer  $({\rm RLIP}_c) $ to the models  $({\rm RP}_c)$,  and the (stable) robust strong duality  results for   $({\rm RP}_c)$ apply.  
By such a way, several forms of   dual problems for $ ({\rm RLIP}_c)$  are proposed. Necessary and sufficient conditions for stable robust strong duality of these pairs of primal-dual problems are given, for  which some cover  results known in the literature while the others,  due to the best knowledge of the authors, are new. 
We point out also  that,  even  in the case with the  absence of uncertainty, i.e., in the case where $ \mathcal{U}_t $ is singleton for each $t \in T$, the results obtained still lead to new results on duality for robust linear infinite/semi-infinite  problems (see Section 6). 

The paper is organized as follows: In Section 2, some preliminaries and basic tools are introduced. Concretely, we introduce or quote some robust Farkas lemmas for conical constraint systems under uncertainty, some results on duality of robust linear problems with convex conical constraints. The model of robust linear infinite problem and its  seven   models of  robust dual problems are given in Section 3. 
The main results: Robust stable strong duality results for  $ ({\rm RLIP}_c)$   are given in Section 4 together with two more models of robust dual problems  of $ ({\rm RLIP}_c)$. Here, the  stable strong duality for the seven pairs of primal-dual problems are established and the ones for  two new  pairs are mentioned. Some of these results cover or extend some in \cite{DGLV17-Robust}, \cite{GJLL}.  In Section 5, from  the duality results in Section 4, we derive  variants  of stable robust  Farkas lemmas for linear infinite systems with uncertainty which cover the ones in   \cite{DGLM17-Optim}, \cite{DMVV17-Robust-SIAM}  while the others are new. In  Section 6, as an extension/application of the approach, we   get robust strong duality results for linear problems with sub-affine constraints. We consider a particular case  with   the absence of uncertainty, (i.e., in the case where $ \mathcal{U}_t $ is singleton for each $t \in T$) the results obtained still lead to some new results on duality for robust linear infinite/semi-infinite  problems, and, in turn, these results also give rise to several new versions of Farkas lemmas for  sub-affine systems under uncertainty and also, some new versions  of Farkas-type results   for linear infinite/semi-infinite systems. 

\section{Preliminaries and  Basic Tools}

Let $X$ and $Z$ be locally  convex  Hausdorff  topological vector spaces  with topological dual spaces $X^{\ast }$ and $Z^{\ast }$, respectively. The only topology considered  on
dual spaces is the weak*-topology. Let  $S$ be a  non-empty  closed and convex cone in  $Z$. The positive dual cone $S^+$ of  $S$ is  $S^+:= \{ z^* \in Z^\ast:  \la z^*, s\ra \geq 0, \forall s \in S\}$.   Let further,  $\Gamma (X)$ be the set of all proper, convex and lower semi-continuous (briefly, lsc) functions on $X$.   
Denote by $\mathcal{L}(X,Z)$ the space of all continuous linear mappings  from $X$ to $Z$ and 
 $ \overline{\mathbb{R}}:=\mathbb{R}\cup\{\pm\infty\}$,  $\mathbb{R}_{\infty}:=\mathbb{R}\cup\{+\infty\}$.

 \subsection{Notations and prelimaries}   
    
 We now give some notations which will be used in the sequent. 
    For $f\colon X\to \overline{\mathbb{R}}:=\mathbb{R}\cup\{\pm\infty\}$, the domain and the epigraph of $f$ are defined respectively by
\begin{align*}
\dom f&:=\{x\in X:\ f(x)\neq +\infty \},\\
\epi f&:=\{(x,r)\in X\times \mathbb{R}: 
f(x)\leq r\},
\end{align*}%
while its conjugate function   $f^\ast\colon X\rightarrow \overline{\mathbb{R}}$\  is  
\begin{equation*}
f^{\ast }\left( x^{\ast }\right) :=\sup_{x\in X}\left[ \left\langle x^{\ast
},x\right\rangle -f\left( x\right) \right] ,\quad \forall x^{\ast }\in X^{\ast }.
\end{equation*}%

Let $ \leq_{S}$ be  the
ordering on $Z$\ induced by the cone $S$, i.e., 
\begin{equation}  \label{order}
z_{1}\leqq _{S}z_{2}\ \text{ if and only if }\ z_{2}-z_{1}\in S.
\end{equation}%
We enlarge $Z$ by attaching a greatest element $+\infty _{Z}$ and a
smallest element $-\infty _{Z}$  which do not
belong to $Z$ by the  convention, $-\infty _{Z} \leqq _{S} z  \leqq_{S}   +\infty_{Z}$ for all $z \in Z$. 
Denote  $Z^{\bullet }:=Z\cup \{-\infty _{Z},\ +\infty
_{Z}\}$. Let $G\colon X\to Z^\bullet$.  We define
\begin{align*}
\dom G&:=\{x\in X:\ G(x)\neq +\infty_Z \},\\
\epi_S G&:=\{(x,z)\in X\times Z: 
z \in G(x) + S \}.
\end{align*}%
If $-\infty_Z\notin G(X)$ and $\dom G\ne\emptyset$, then we say that $G$ is a {\it proper mapping}. We say that $G$ is $S$-\emph{convex} (resp., $S$-\emph{epi closed}) if $%
\epi_{S}G $ is a convex subset (resp., a closed subset) of
$X\times Z$. The mapping $G $ is called \emph{positively $S$-upper
semicontinuous}\footnote{In \cite{Bot10} this notion is named as  {\it Star $S$-usc}}  (\emph{positively $S$-usc}, briefly) if $\lambda G$ is  upper semicontinuous (in short, usc) for all $\lambda\in S^{+}$ (see  \cite{Bol98}, \cite{Bol01}).

Let $T$ be an index   (possibly infinite)  set and let $\mathbb{R}^{T}$ be  the product space   endowed with the
product topology and its dual space, $\mathbb{R}^{(T)}$, the so-called \emph{space of generalized finite sequences} $%
\lambda =(\lambda _{t})_{t\in T}$ such that $\lambda _{t}\in \mathbb{R},$
for each $t\in T,$ and with only finitely many $\lambda _{t}$ different from
zero. The supporting set of $\lambda \in \mathbb{R}^{(T)}$ is $\supp
\lambda :=\{t\in T \, :\  \lambda _{t}\neq 0\}.$ 
 For a pair   $\left( \lambda ,v\right) \in \mathbb{R}^{(T)}\times \mathbb{R
}^{T}$,  
  the dual
product  is defined by 
\begin{equation*}
\left\langle \lambda ,v\right\rangle :=\left\{ 
\begin{array}{ll}
\sum\limits_{t\in \supp\lambda }\lambda _{t}v_{t} & \text{if }\lambda
\neq 0_{T}, \\ 
\ \ 0 & \text{otherwise}. %
\end{array}%
\right. \text{ }
\end{equation*}%
 The positive cones in  $\mathbb{
R}^{T}$ and in $\mathbb{%
R}^{(T)}$ are denoted by $\mathbb{R}_+^T$ and  $\mathbb{R}_{+}^{(T)}$, respectively. 

\ms

  {\it $S^+$-Upper Semi-Continuity  and Uniform $S^+$-Concavity}.  
Let  $U \ne \emptyset$ be a  subset of some topological  space. We recall the notions of $S^+$-upper semi-continuity, $S^+$-convexity,   and uniform $S^+$-convexity  introduced recently in \cite{DL18ACTA}.    

\begin{definition}\label{def_1}
\cite{DL18ACTA}
Let $H\colon U\to Z\cup\{+\infty_Z\}$. We say that:

$\bullet$ $H$ is {\it $S^+$-convex} if for all  $(u_i,\lambda_i)\in U\times S^+ \; (i=1,2)$ there is $(\bar u,\bar \lambda)\in U\times S^+$ such that $(\lambda_1 H)(u_1)+(\lambda_2 H)(u_2) \ge (\bar \lambda H)(\bar u)$,

$\bullet$ $H$ is {\it $S^+$-upper semi-continuos} (briefly, {\it $S^+$-usc})   if for any net $(\lambda_\alpha, u_\alpha,r_\alpha)_{\alpha\in D}\subset S^+\times U\times \RR$ and any  $(\bar \lambda,\bar u,\bar r)\in S^+\times U\times \RR$, satisfying 
$$ \begin{cases}
(\lambda_\alpha H)( {u_\alpha}) \ge r_\alpha,\; \forall \alpha \in D\\
\lambda_\alpha \overset{*}{\rightharpoonup} \bar \lambda,\; u_\alpha\to \bar u,\; r_\alpha\to\bar r
\end{cases}
\Longrightarrow\; (\bar \lambda H)(\bar u)\ge \bar r, 
$$
where the  symbol ``$\overset{*}{\rightharpoonup}$" means the convergence with respect to  weak$^*$-topology.  

 $\bullet$ $H$ is {\it $S^+$-concave} ({\it $S^+$-lsc}, resp.) if $-H$ is {\it $S^+$-convex} ({\it $S^+$-usc}, respectively).
\end{definition}

\begin{definition}\label{def_2}
\cite{DL18ACTA}  
For  the collection $(H_j)_{j \in I}$ with $H_j\colon  U\to Z\cup\{+\infty_Z\}$,  we say that $(H_j)_{j\in I}$
is {\it uniformly $S^+$-convex} if  for all  $(u_i,\lambda_i)\in U\times S^+, \; i=1,2$,  there is $(\bar u,\bar \lambda)\in U\times S^+$ such that $(\lambda_1 H_j)(u_1)+ (\lambda_2 H_j)(u_2) \ge (\bar \lambda H_j)(\bar u)$ for all $j\in I$. 

The collection  $(H_j)_{j\in I}$ is said to be {\it uniformly $S^+$-concave} if  $(-H_j)_{j\in I}$ is   uniformly $S^+$-convex.
\end{definition}
\begin{remark} 
\label{rem_2.1eeee} It is worth observing that when $H\colon   U \to Z\cup \{+\infty_Z\}$ is   {$S^+$-usc} then $G$ is  positively $S$-usc \cite{Bot10}. Moreover, 
in  the case where $Z=\mathbb{R}$ and $S=\mathbb{R}_+$, (and hence, $S^+=\mathbb{R}_+$),    the following assertions hold\footnote{For a  function, we prefer the the lowercase letter $h$  to  $H$.}: 

{\rm (i)}  If $H\colon U\to \mathbb{R}_{\infty}$ is a convex function then $H$ is $\mathbb{R}_+$-convex, 

{\rm (ii)} 
If $H_j\colon  U \to \mathbb{R}_\infty$ is convex for all $j\in I$ then   $(H_j)_{j\in I}$ is {uniformly $\mathbb{R}_+$-convex}. 

{\rm (iii)}   $H\colon   U \to  \mathbb{R}_\infty$ is  {$\RR_+$-usc} if and only if it is usc.

\noindent For details, see \cite{DL18ACTA}. 
\end{remark}


\subsection{Conical Constrained Systems with Uncertainty}  

Let  $\U$ be an   uncertainty parameter set, $(G_u)_{u\in \U}$ with $G_u\colon X\to Z\cup\{+\infty_Z\}$,  be a proper $S$-convex and $S$-epi closed mapping for each $u \in \U$. 
 We are concerned with the robust cone constraint system:
 \begin{equation}
\label{eq_3.1bbb}
G_u(x)\in-S,\quad \forall u\in \U.
\end{equation}
Denote 
\begin{equation} \label{Fu}
\mathcal{F}_u:=\{x\in X: G_u(x)\in-S\},\   u\in \U, 
\end{equation} 
   and   
 $ \mathcal{F}$ the \textit{solution set} of \eqref{eq_3.1bbb}, i.e., 
\begin{equation} \label{F}
\mathcal{F}:=\{x\in X: G_u(x)\in -S,\; \forall u\in \U\}.
\end{equation} 
It is clear that $\mathcal{F} = \bigcap \limits_{u \in \U} \mathcal{F}_u$. 
 Assume that $\mathcal{F}\ne\emptyset$. 
 
Corresponding to  the  system \eqref{eq_3.1bbb}, let us consider the set (also called: {\it robust moment cone} corresponding to the system \eqref{eq_3.1bbb})  
\begin{equation}\label{M0}
\mathcal{M}_0:=\bigcup_{(u,\lambda)\in \U\times S^+} \epi (\lambda G_u)^\ast.
\end{equation} 
It is easy to check that (generalizing the one in \cite[Proposition 2.2]{JL2}) $\mathcal{M}_0$ is a cone in $X^\ast\times \mathbb{R}$.  Moreover,  $\mathcal{M}_0$ (and also $\mathcal{M}_1$ in \eqref{M_1}) leads to the cone $M$ in \cite{GJLL}.

We now introduce a version of robust Farkas-type result involving the system \eqref{eq_3.1bbb}  and some of its consequences, which will be used as a key tool for the  results of this section.

\begin{proposition}
[Farkas-type result involving  robust system \eqref{eq_3.1bbb}]
\label{lem_FL}
For all $(x^\ast, r)\in X^\ast\times\mathbb{R}$, the next statements are equivalent:

{\rm (i)} $G_u(x)\in -S,\ \forall u\in \U \; \Longrightarrow\; \la x^\ast, x\ra \ge r$,

{\rm (ii)} $(x^\ast, r)\in -\oco \mathcal{M}_0$.
\end{proposition}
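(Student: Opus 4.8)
The strategy is to characterize the set of valid "linear consequences" $(x^\ast,r)$ of the robust system $G_u(x)\in-S,\ \forall u\in\U$ via a standard epigraph/conjugate duality argument, and then to identify the relevant set with $-\oco\mathcal{M}_0$. First I would translate statement (i) into a statement about a single function. Setting $F:=\sum_{u\in\U}\delta_{\mathcal{F}_u}$, which is just $\delta_{\mathcal{F}}$ (the indicator of the solution set $\mathcal{F}=\bigcap_{u\in\U}\mathcal{F}_u$), condition (i) says precisely that the affine function $x\mapsto\la x^\ast,x\ra-r$ is nonnegative on $\mathcal{F}$, i.e. $(x^\ast,r)\in-\epi\delta_{\mathcal{F}}^\ast$ after the usual sign bookkeeping: $\la x^\ast,x\ra\ge r$ for all $x\in\mathcal{F}$ is equivalent to $\delta_{\mathcal{F}}^\ast(-x^\ast)\le -r$, i.e. $(-x^\ast,-r)\in\epi\delta_{\mathcal{F}}^\ast$, that is $(x^\ast,r)\in-\epi\delta_{\mathcal{F}}^\ast$.

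The core of the proof is then the identity
\[
\epi\delta_{\mathcal{F}}^\ast=\oco\Big(\bigcup_{(u,\lambda)\in\U\times S^+}\epi(\lambda G_u)^\ast\Big)=\oco\mathcal{M}_0.
\]
For each fixed $u$, since $G_u$ is proper, $S$-convex and $S$-epi closed, the scalarization $\lambda G_u$ (for $\lambda\in S^+$) is a proper convex lsc function whose sublevel information gives $\mathcal{F}_u=\{x:(\lambda G_u)(x)\le0\ \forall\lambda\in S^+\}$; a direct computation (or the known result that the conjugate of $\delta_{\mathcal{F}_u}$ has closed convex hull of epigraph equal to $\cl\bigcup_{\lambda\in S^+}\epi(\lambda G_u)^\ast$) gives $\epi\delta_{\mathcal{F}_u}^\ast=\oco\bigcup_{\lambda\in S^+}\epi(\lambda G_u)^\ast$. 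Taking the intersection over $u$ and using $\delta_{\mathcal{F}}=\sup_{u}\delta_{\mathcal{F}_u}$, the calculus rule $\epi(\sup_u f_u)^\ast=\oco\bigcup_u\epi f_u^\ast$ (valid for arbitrary families of proper lsc convex functions with nonempty domain intersection, which holds here because $\mathcal{F}\ne\emptyset$) yields $\epi\delta_{\mathcal{F}}^\ast=\oco\bigcup_u\epi\delta_{\mathcal{F}_u}^\ast=\oco\bigcup_{(u,\lambda)}\epi(\lambda G_u)^\ast=\oco\mathcal{M}_0$, where the middle equality absorbs one closed-convex-hull into another. Combining with the first paragraph gives (i)$\iff(x^\ast,r)\in-\oco\mathcal{M}_0\iff$(ii).

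The main obstacle I expect is the rigorous justification of the two epigraph/conjugate calculus identities in full generality: (a) $\epi(\lambda G_u)^\ast$ for $\lambda$ ranging over $S^+$ correctly recovers $\epi\delta_{\mathcal{F}_u}^\ast$ up to closed convex hull — this is where $S$-convexity and $S$-epi closedness of $G_u$ are essential, and one must be careful that no continuity/Slater-type assumption is needed precisely because we close up; and (b) the formula $\epi(\sup_u f_u)^\ast=\oco\bigcup_u\epi f_u^\ast$ for an arbitrary (possibly infinite) index family, which is a known but slightly delicate fact requiring the weak$^\ast$-closure. Everything else is routine sign-chasing. Alternatively, one can shortcut via a known robust Farkas lemma: the implication (i) is equivalent, for each $u$, to $(x^\ast,r)$ lying in $-\oco\epi(\delta_{\mathcal{F}_u})^\ast$ only under a qualification, so the clean "no-qualification" version forces the passage to $\oco$, which is exactly what the statement asserts; I would present the conjugate-calculus route as the cleanest.
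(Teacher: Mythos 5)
Your proposal is correct and follows essentially the same route as the paper: both translate (i) into $(x^\ast,r)\in-\epi\delta_{\mathcal F}^\ast$, then prove $\epi\delta_{\mathcal F}^\ast=\oco\mathcal{M}_0$ by combining the per-$u$ identity $\epi\delta_{\mathcal F_u}^\ast=\cl\bigcup_{\lambda\in S^+}\epi(\lambda G_u)^\ast$ (the paper cites \cite{DNV10}; your extra convex hull is harmless since it is absorbed in the final $\oco$) with the supremum rule $\epi(\sup_u\delta_{\mathcal F_u})^\ast=\oco\bigcup_u\epi\delta_{\mathcal F_u}^\ast$ (the paper cites \cite[Lemma 2.2]{GNg08}), using $\mathcal F\neq\emptyset$ for properness exactly as you indicate.
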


\begin{proof}
It is easy to see that (i) is equivalent to $ -r\ge -\la x^\ast, x\ra$ for all $x\in \mathcal{F}$ ,
which also means $(x^\ast, r)\in -\epi \delta_\mathcal{F}^\ast.$
So, to prove the equivalence  (i)$\Longleftrightarrow$(ii), it suffices to show  that  $\epi \delta_\mathcal{F}^\ast=\oco\mathcal{M}_0$.

Now,  for each  $u\in \U$, $\mathcal{F}_u$ is closed and convex subsets of $X$, and hence, $\delta_{\mathcal{F}_u} \in \Gamma(X)$ and  so $\delta_{\mathcal{F}}=\sup_{u\in \U}\delta_{\mathcal{F}_u} \in \Gamma (X)$.  By  \cite[Lemma 2.2]{GNg08}, one gets
$\epi \delta_{\mathcal{F}}^\ast=\oco \bigcup_{u\in \U}\epi \delta_{\mathcal{F}_u}^\ast.$
On the other hand, for each  $u\in \U$,  one  has  $\epi \delta_{\mathcal{F}_u}^\ast=\overline{\bigcup_{\lambda\in S^+} \epi(\lambda G_u)^\ast}$ (see \cite{DNV10}), and so, 
  $\epi \delta_{\mathcal{F}}^\ast=\oco\mathcal{M}_0$ and we are done.
\end{proof}

As a direct consequence of Proposition \ref{lem_FL}, we get
\begin{corollary}
\label{lem_FL2}
Let $(A_u)_{u\in \U}\subset \L(X,Z)$ and $(\omega_u)_{u\in \U}\subset Z$.  If  $(x^\ast, s)\in X^\ast\times\mathbb{R}$ then the  next statements are equivalent:

{\rm (i)} $ A_u(x) -\omega_u\in -S,\ \forall u\in \U \;   \Longrightarrow\; \la x^\ast, x\ra \ge s$,

{\rm (ii)} $(x^\ast, s)\in -\oco \Big[\big\{(\lambda A_t, \la \lambda, \omega_t\ra) : (u,\lambda)\in \U\times S^+\big\}+\mathbb{R}_+(0_{X^\ast},1)\Big]$.
\end{corollary}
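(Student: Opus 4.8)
The plan is to obtain Corollary~\ref{lem_FL2} as a specialization of Proposition~\ref{lem_FL} by choosing the family of mappings $(G_u)_{u\in\U}$ appropriately. First I would set $G_u := A_u(\cdot) - \omega_u$ for each $u \in \U$, i.e.\ each $G_u\colon X \to Z$ is an affine continuous mapping. One checks the hypotheses needed to invoke Proposition~\ref{lem_FL}: since $A_u$ is linear and continuous and $\omega_u$ is a constant, $G_u$ is affine and continuous, hence $\epi_S G_u$ is closed and convex, so $G_u$ is $S$-convex and $S$-epi closed; it is also proper (indeed $\dom G_u = X$ and $-\infty_Z \notin G_u(X)$). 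Thus condition (i) of the Corollary is literally condition (i) of Proposition~\ref{lem_FL} for this choice, and Proposition~\ref{lem_FL} gives that (i) is equivalent to $(x^\ast,s) \in -\oco\mathcal{M}_0$, where $\mathcal{M}_0 = \bigcup_{(u,\lambda)\in\U\times S^+}\epi(\lambda G_u)^\ast$.

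The remaining work is the computation of $\epi(\lambda G_u)^\ast$ for $\lambda\in S^+$. For fixed $u$ and $\lambda$, the function $x \mapsto (\lambda G_u)(x) = \langle\lambda, A_u(x)\rangle - \langle\lambda,\omega_u\rangle$ is an affine functional on $X$: it equals $\langle \lambda A_u, x\rangle - \langle\lambda,\omega_u\rangle$, where $\lambda A_u \in X^\ast$ denotes the composition $x\mapsto\langle\lambda, A_u x\rangle$. The conjugate of an affine function $x\mapsto\langle a^\ast,x\rangle - \beta$ is $+\infty$ everywhere except at $a^\ast$, where it takes the value $\beta$; hence $\epi(\lambda G_u)^\ast = \{\lambda A_u\}\times[\,\langle\lambda,\omega_u\rangle,+\infty) = (\lambda A_u, \langle\lambda,\omega_u\rangle) + \mathbb{R}_+(0_{X^\ast},1)$. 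Taking the union over $(u,\lambda)\in\U\times S^+$ yields
\begin{equation*}
\mathcal{M}_0 = \big\{(\lambda A_u,\langle\lambda,\omega_u\rangle) : (u,\lambda)\in\U\times S^+\big\} + \mathbb{R}_+(0_{X^\ast},1),
\end{equation*}
which is exactly the set appearing inside $-\oco[\cdots]$ in statement (ii) of the Corollary (noting that adding $\mathbb{R}_+(0_{X^\ast},1)$ a second time changes nothing). Substituting this into the equivalence from Proposition~\ref{lem_FL} finishes the proof.

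I do not expect a serious obstacle here; the only point requiring a little care is the conjugate computation, and in particular making sure the notation $\lambda A_u$ (meaning $\lambda\circ A_u \in X^\ast$) is consistent with how $\lambda G_u$ is interpreted as a scalar function when $\lambda\in S^+$ — this is the same convention already used in defining $\mathcal{M}_0$. One should also remark briefly that $\U$ plays no structural role beyond indexing, so no topology or convexity on $\U$ is needed, consistent with the hypothesis-free statement of the Corollary.
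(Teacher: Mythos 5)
Your proposal is correct and is exactly the intended argument: the paper states Corollary \ref{lem_FL2} as a direct consequence of Proposition \ref{lem_FL} with the affine specialization $G_u=A_u(\cdot)-\omega_u$, and the conjugate computation $\epi(\lambda G_u)^\ast=(\lambda A_u,\langle\lambda,\omega_u\rangle)+\mathbb{R}_+(0_{X^\ast},1)$ you carry out is the same one the paper records later (just before introducing $\mathcal{M}_1$). Nothing is missing; your identification of $\mathcal{M}_0$ with the set in (ii) (modulo the paper's typographical $t$ versus $u$ indices) settles the claim.
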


\begin{remark} 
 Corollary \ref{lem_FL2} covers \cite[Theorem 3.1]{JL11},  \cite[Theorem 4.2(iii)]{DGLM17-Optim},  \cite[Theorem 5.5]{DMVV17-Robust-SIAM}, and in some sense,  it extend the robust semi-infinite Frakas' lemmas in \cite{GJLL}.   In the case when  $Z=\mathbb{R}$ and $\U=T$, $X = \mathbb{R}^n$,  
Corollary \ref{lem_FL2}  extends \cite[Corollary 3.1.2]{GL98}. 
\end{remark}

Let  $\emptyset \ne B \subset X^*$ and $\beta \in \R$.  The function  $\sigma_B (\cdot) - \beta$, where  $\sigma_{B}(x):=\sup\{\la x^\ast, x\ra: x^\ast\in B\}$,  is known as a  sub-affine function   \cite{DGV19a}. We next give a version of robust Farkas lemma for a system involving  sub-affine  functions.

\begin{corollary}
\label{lem_FL3}
Let $( \A_t) _{t\in T}$ be a family of nonempty, $w^{\ast }$-
closed convex subsets of $X^{\ast }$ and  $(b_t)_{t\in T}\subset \mathbb{R}$. Then, for each $(x^\ast, r)\in X^\ast\times\mathbb{R}$, the next statements are equivalent:

{\rm (i)} $\sigma_{{\A}_t}( x)\le b_t,\ \forall t\in T\; \Longrightarrow\; \la x^\ast, x\ra \ge r$,

{\rm (ii)} $(x^\ast, r)\in -\oco \cone \Big[\bigcup_{t\in T}(\A_t\times\{ b_t\}) \cup \{(0_{X^*},1)\}\Big]$.
\end{corollary}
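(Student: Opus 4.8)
The plan is to derive this as a special case of Corollary \ref{lem_FL2}, taking $Z=\mathbb{R}$, $S=\mathbb{R}_+$ (so $S^+=\mathbb{R}_+$), and choosing the uncertainty set and data in the right way. First I would observe that a sub-affine inequality $\sigma_{\A_t}(x)\le b_t$ is just a compact way of writing the uncertain linear system $\langle x^\ast,x\rangle\le b_t$ for all $x^\ast\in\A_t$; that is, $\sigma_{\A_t}(x)\le b_t$ for all $t\in T$ holds if and only if $\langle x^\ast,x\rangle - b_t\le 0$ for all $(x^\ast,t)$ with $x^\ast\in\A_t$. So I set $\U:=\{(x^\ast,t): t\in T,\ x^\ast\in\A_t\}$, and for $u=(x^\ast,t)\in\U$ define $A_u\in\mathcal{L}(X,\mathbb{R})$ by $A_u:=x^\ast$ (viewing $x^\ast$ itself as the continuous linear map $x\mapsto\langle x^\ast,x\rangle$) and $\omega_u:=b_t\in\mathbb{R}$. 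Then $A_u(x)-\omega_u\le 0$ for all $u\in\U$ is exactly the hypothesis system in (i), so condition (i) here is literally condition (i) of Corollary \ref{lem_FL2}.

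Next I would translate the right-hand side. Since $S^+=\mathbb{R}_+$, a pair $(u,\lambda)=((x^\ast,t),\lambda)\in\U\times\mathbb{R}_+$ contributes the point $(\lambda A_u,\langle\lambda,\omega_u\rangle)=(\lambda x^\ast,\lambda b_t)$. As $(x^\ast,t)$ ranges over $\U$ and $\lambda$ over $\mathbb{R}_{+}$, the set $\{(\lambda x^\ast,\lambda b_t):\lambda\ge 0,\ t\in T,\ x^\ast\in\A_t\}$ is precisely $\cone\big[\bigcup_{t\in T}(\A_t\times\{b_t\})\big]$ — here I use that $\cone$ means the cone generated including the origin (the $\lambda=0$ case gives $(0_{X^*},0)$). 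Adding $\mathbb{R}_+(0_{X^*},1)$ and then taking $\oco$ of the union is the same as taking $\oco\cone$ of $\bigcup_{t}(\A_t\times\{b_t\})\cup\{(0_{X^*},1)\}$: indeed $\mathbb{R}_+(0_{X^*},1)=\cone\{(0_{X^*},1)\}$, and the closed convex hull of the sum of two cones equals the closed convex hull of the union of the two cones (their Minkowski sum and the convex hull of their union have the same convex hull, since convexity absorbs sums of scaled generators). Thus condition (ii) of Corollary \ref{lem_FL2} becomes exactly condition (ii) here.

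The only points requiring a little care — and the main (minor) obstacle — are the hypotheses of Corollary \ref{lem_FL2}: it requires $(G_u)_{u\in\U}$ proper, $S$-convex and $S$-epi closed, where here $G_u=A_u-\omega_u$ is affine and continuous, hence $\mathbb{R}_+$-convex and $\mathbb{R}_+$-epi closed automatically, so this is immediate; one should also note the family $\mathcal{F}$ is assumed nonempty in the standing setup of Section 2, which corresponds to consistency of the sub-affine system (if the system is inconsistent both statements are vacuously/trivially handled, or one includes this in the standing assumptions). I would also remark that the $w^\ast$-closedness and convexity of each $\A_t$ is not actually needed for the equivalence as stated — replacing $\A_t$ by $\oco\A_t$ changes neither $\sigma_{\A_t}$ nor the closed convex conical hull on the right — but it is harmless to keep it, and it matches the statement of the related results in \cite{DGV19a}. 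Assembling these identifications gives the claimed equivalence directly from Corollary \ref{lem_FL2}.
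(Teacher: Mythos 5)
Your proof is correct, but it takes a different route from the paper. The paper applies Proposition \ref{lem_FL} directly, keeping $\U=T$ and taking the sub-affine functions themselves as the constraint mappings, $G_t:=\sigma_{\A_t}-b_t$; the key computation there is the conjugate identity $\epi(\lambda G_t)^\ast=\lambda(\A_t\times\{b_t\})+\mathbb{R}_+(0_{X^\ast},1)$ (which is where the $w^\ast$-closedness and convexity of $\A_t$ enter), after which $\oco\mathcal{M}_0$ is read off. You instead disaggregate each inequality $\sigma_{\A_t}(x)\le b_t$ into the family of affine inequalities $\la x^\ast,x\ra\le b_t$, $x^\ast\in\A_t$, enlarge the uncertainty set to $\U=\{(x^\ast,t): t\in T,\ x^\ast\in\A_t\}$, and invoke the affine-data Corollary \ref{lem_FL2}; the conjugate computation is then replaced by the elementary cone identity $\oco\bigl[\cone\bigl(\bigcup_t(\A_t\times\{b_t\})\bigr)+\mathbb{R}_+(0_{X^\ast},1)\bigr]=\oco\cone\bigl[\bigcup_t(\A_t\times\{b_t\})\cup\{(0_{X^\ast},1)\}\bigr]$, which holds because both cones contain the origin, so their union is contained in their sum and their sum is contained in the convex hull of their union. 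What your route buys: the constraint mappings are finite-valued continuous affine functions, so properness, $S$-convexity and $S$-epi closedness are immediate (no need to worry about $\sigma_{\A_t}$ taking $+\infty$ or about the $\lambda=0$ convention in $\lambda G_t$), and, as you correctly observe, the closedness and convexity of $\A_t$ become inessential; what the paper's route buys is a one-line identification of $\mathcal{M}_0$ without introducing the enlarged index set. Your caveat about feasibility is apt but not a point of difference: Proposition \ref{lem_FL} and Corollary \ref{lem_FL2} both sit under the standing assumption $\mathcal{F}\ne\emptyset$ of Section 2, which the paper's own proof uses tacitly as well.
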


\begin{proof}
Take $Z=\mathbb{R}$, $S=\mathbb{R}_+$ (and hence, $Z^\ast=\mathbb{R}$ and $S^+=\mathbb{R}^+$), $\U=T$, and $G_t:=\sigma_{\A_t}-b_t$ for each  $t\in T$. Then, for any $(t,\lambda)\in T\times\mathbb{R}_+$, one has
\begin{align*}
\epi (\lambda G_t)^\ast&=\lambda\epi ( G_t)^\ast=\lambda\epi (\sigma_{\A_t}-b_t)^\ast=\lambda (\A_t\times\{b_t\}) +\mathbb{R}_+(0_{X^\ast},1), \\
\mathcal{M}_0
&=\bigcup_{t\in T}\co\cone\Big[(\A_t\times \{b_t\})\cup \{(0_{X^\ast},1)\}\Big],
\end{align*} 
and so, $ \oco \mathcal{M}_0=\oco\cone\Big[\bigcup_{t\in T}(\A_t\times \{b_t\})\cup \{(0_{X^\ast},1)\}\Big].$ 
The conclusion now follows from Proposition \ref{lem_FL}.
\end{proof}

\subsection{Duality of Robust Linear Problems with Convex Conical constraints} 

Let  $c\in X^\ast$. We consider the pair of primal-dual  robust problems:
\begin{align*}
	({\rm RP}_c)\qquad& {\inf  }\;  \la c,x\ra\\
\textrm{subject to}\ \  & x\in X,\; G_u(x)\in -S, \ \forall u\in \U,\\
	({\rm RD}_c)\qquad& \sup_{(u,\lambda)\in \U\times S^+}  \inf_{x\in X} (\la c,x\ra+\lambda G_u(x)).
\end{align*}


Let  $\F_u$ and $\F$ be as in \eqref{Fu} and \eqref{F}.  Let further 
   $\bar x\in \F$ and $(\bar u,\bar \lambda)\in \U\times S^+$.  As $\bar x\in \F$, $G_u(\bar x) \in-S$ for all $u\in \U$, and in particular, $G_{\bar u}(\bar x) \in-S$. Moreover,   as  $\bar \lambda\in S^+$, one has  $\bar \lambda G_{\bar u} (\bar x)\leq 0$. Therefore, 
  $
    \la c, \bar x \ra  +      (\bar \lambda G_{\bar u}) (x)    \le \la c,\ \bar x\ra , 
   $
    and so, 
 \begin{eqnarray*}
 \inf\limits_{x \in X} \Big[ \la c,x\ra  +(\bar \lambda G_{\bar u}) (x)\Big] &\le& \la c,\bar  x\ra   + (\bar \lambda G_{\bar u}) (x) 
  \leq      \la c,\bar  x\ra , 
 \end{eqnarray*} 
 leading to 
 \begin{eqnarray*}
 \inf\limits_{x \in X} \Big[ \la c,x\ra  +(\bar \lambda G_{\bar u}) (x)\Big] &\le& \inf\limits_{\bar x \in A}    \la c,\bar  x\ra. 
 \end{eqnarray*}  
 Consequently, 
 \begin{eqnarray}\label{eq_2.2ff}
\sup\limits_{(\bar u, \bar \lambda) \in \U \times S^+}   \inf\limits_{x \in X} \Big[ \la c,x\ra  +(\bar \lambda G_{\bar u}) (x)\Big] \leq 
\inf\limits_{\bar x \in A}    \la c,\bar  x\ra,   
 \end{eqnarray}   
 which means that the {\it weak duality} holds for the pair $({\rm RP}_c) -  ({\rm RD}_c)$.  
 
 \begin{definition} \label{robust-dual}
We say  that
\begin{itemize}  
\item  {\it the robust strong  duality holds for the pair $({\rm RP}_c)-({\rm RD}_c)$} if\\ $\inf ({\rm RP}_c)=\max ({\rm RD}_c)$,
\item   {\it the stable  robust strong  duality holds for the pair $({\rm RP}_c)-({\rm RD}_c)$} if\\ $\inf ({\rm RP}_c)=\max ({\rm RD}_c)$ for all $c\in X^*$.
\end{itemize}
\end{definition}  
The next theorem,   Theorem \ref{thm_StrD},  can be derived from \cite[Theorem 6.3]{DMVV17-Robust-SIAM}. However, for the sake of convenience  we will give here a short and direct   proof. 

\begin{theorem}[Characterization of stable robust strong duality for $({\rm RP}_c)$]
\label{thm_StrD}  Assume that $r_0:= \inf  ({\rm RP}_c) > - \infty$. Then 
 the following statements are equivalent:

$\rm(a)$ $\mathcal{M}_0 $ is a closed and convex subset  of $X^*\times \mathbb{R}$, 
 
$\rm(b)$  The  stable robust strong duality holds for the pair {$({\rm RP}_c)-({\rm RD}_c)$}, i.e.,  
$$\inf ({\rm RP}_c)=\max ({\rm RD}_c),\quad\forall c\in X^\ast.$$
\end{theorem}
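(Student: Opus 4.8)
The plan is to exploit the Farkas-type characterization already established in Proposition~\ref{lem_FL}, together with the weak duality inequality \eqref{eq_2.2ff}, and to reformulate strong duality as an epigraph containment that becomes an equality precisely when $\mathcal{M}_0$ is closed and convex.

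First I would unwind the definitions. For a fixed $c\in X^\ast$, observe that $\inf({\rm RP}_c)=r_0$ means exactly that $\la c,x\ra\ge r_0$ for all $x\in\mathcal{F}$ and that $r_0$ is the largest such constant; equivalently $(c,r_0)\in -\epi\delta_{\mathcal F}^\ast$ and $(c,r)\notin -\epi\delta_{\mathcal F}^\ast$ for $r>r_0$. On the dual side, a straightforward computation gives $\inf_{x\in X}(\la c,x\ra+\lambda G_u(x))=-(\lambda G_u)^\ast(-c)$, so the dual value is $\sup_{(u,\lambda)\in\U\times S^+}\big(-(\lambda G_u)^\ast(-c)\big)$, and the value $-(\lambda G_u)^\ast(-c)\ge r$ is equivalent to $(-c,-r)\in\epi(\lambda G_u)^\ast$, i.e. $(c,r)\in-\epi(\lambda G_u)^\ast$. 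Hence $\max({\rm RD}_c)\ge r$ with the max attained iff $(c,r)\in-\mathcal{M}_0$. Thus: stable robust strong duality holds iff for every $c\in X^\ast$ and every $r\le\inf({\rm RP}_c)$ one has $(c,r)\in-\mathcal{M}_0$, which, combined with the observation above that $(c,r)\in-\epi\delta_{\mathcal F}^\ast$ characterizes $r\le\inf({\rm RP}_c)$, says precisely that $-\epi\delta_{\mathcal F}^\ast\subseteq-\mathcal{M}_0$, i.e. $\epi\delta_{\mathcal F}^\ast\subseteq\mathcal{M}_0$.

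Next I would bring in Proposition~\ref{lem_FL}, whose proof already records the identity $\epi\delta_{\mathcal F}^\ast=\oco\mathcal{M}_0$. Since $\mathcal{M}_0$ is always a cone (noted after \eqref{M0}), $\oco\mathcal{M}_0=\cl\co\mathcal{M}_0$, and we always have $\mathcal{M}_0\subseteq\epi\delta_{\mathcal F}^\ast$ by weak duality \eqref{eq_2.2ff}. So the two inclusions $\epi\delta_{\mathcal F}^\ast\subseteq\mathcal{M}_0$ and $\mathcal{M}_0=\cl\co\mathcal{M}_0$ are equivalent, and the latter is exactly the statement that $\mathcal{M}_0$ is closed and convex. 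This yields (a)$\Leftrightarrow$(b). For the direction (a)$\Rightarrow$(b) one should also check that the supremum in $({\rm RD}_c)$ is \emph{attained} (so that ``$\max$'' is legitimate): when $\mathcal{M}_0$ is closed and convex, $(c,r_0)\in\epi\delta_{\mathcal F}^\ast=\mathcal{M}_0=\bigcup_{(u,\lambda)}\epi(\lambda G_u)^\ast$, so some $(\bar u,\bar\lambda)$ realizes the value $r_0$.

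The main obstacle I anticipate is purely a bookkeeping one: being careful that ``stable'' (all $c\in X^\ast$) is genuinely needed to pass from the pointwise dual-value statement to the set inclusion $\epi\delta_{\mathcal F}^\ast\subseteq\mathcal{M}_0$ — for a single $c$ one only controls one vertical ray in $X^\ast\times\RR$ — and that the hypothesis $r_0>-\infty$ is used to guarantee $(c,r_0)$ actually lies in $\epi\delta_{\mathcal F}^\ast$ rather than the inclusion being vacuous. The functional-analytic content (the Moreau--Rockafellar-type epigraph formula and the closure formula $\epi\delta_{\mathcal F_u}^\ast=\cl\bigcup_{\lambda\in S^+}\epi(\lambda G_u)^\ast$) is already quoted inside the proof of Proposition~\ref{lem_FL}, so no new hard estimate is required; the argument is essentially a translation between optimal values, epigraphs, and the moment cone.
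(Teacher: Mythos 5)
Your proposal is correct and follows essentially the same route as the paper's proof: both rest on the identity $\epi \delta_{\mathcal{F}}^\ast=\oco\mathcal{M}_0$ from Proposition \ref{lem_FL}, the observation that $(c,r)\in-\mathcal{M}_0$ exactly when some dual feasible $(u,\lambda)$ has value at least $r$, and weak duality to force attainment. The only difference is presentational — you package the two implications as the single set equivalence $\epi\delta_{\mathcal F}^\ast\subseteq\mathcal{M}_0 \Leftrightarrow \mathcal{M}_0=\oco\mathcal{M}_0$ (using $\mathcal{M}_0\subseteq\epi\delta_{\mathcal F}^\ast$ from weak duality), whereas the paper argues the two directions pointwise in $(c,r)$ — which is the same content in different bookkeeping.
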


\begin{proof}  Take arbitrarily $c\in X^\ast$.    Observe firstly that 
\begin{align}
\sup ({\rm RD}_c)&=\sup_{(u,\lambda)\in \U\times S^+} \inf\limits_{x \in X}  \Big\{ \la c, x\ra   +  (\lambda G_u)(x)\Big\}  \notag\\
 &=\sup_{(u,\lambda)\in \U\times S^+} \Big[ -  \sup \limits_{x \in X}  \Big\{ \la -c, x\ra   -  (\lambda G_u)(x)\Big\}  
 =\sup_{(u,\lambda)\in \U\times S^+}    [-(\lambda G_u)^\ast(-c)] \notag\\
&=\sup\Big\{r: (c,r)\in -\!\!\!  \bigcup\limits_{ (u,\lambda)\in\U\times S^+}   \!\!\! \gph(\lambda G_u)^\ast  \Big\}
\notag\\
&=\sup\Big\{r: (c,r)\in -\bigcup\limits_{ (u,\lambda)\in\U\times S^+}\gph(\lambda G_u)^\ast  -\mathbb{R}_+(0_{X^\ast},1)\Big\}
\notag\\
&= \sup\Big\{r: (c,r)\in -\bigcup\limits_{ (u,\lambda)\in\U\times S^+}\Big[ \gph(\lambda G_u)^\ast + \mathbb{R}_+(0_{X^\ast},1)\Big]\Big\}
\notag\\
&=  \sup \Big\{r: (c,r)\in  - \!\!\! \bigcup\limits_{ (u,\lambda)\in\U\times S^+} \!\!\!\!    \epi(\lambda G_u)^\ast  \Big\} 
=\sup  \Big\{r: (c,r)\in -\mathcal{M}_0\Big\}.
\label{eq_3.3bbb}
\end{align}
Observe also that $ r_0  < + \infty$ as $({\rm RP}_c)$ is feasible (i.e., its feasible set $\F$ is non-empty) and so, we can assume that $r_0 \in \RR$.    

$\bullet$  [(a)$\Longrightarrow$(b)]  Assume that (a) holds. 
 As $r_0= \inf  ({\rm RP}_c)$, one has 
\begin{equation}
\label{eq_3.4bbb} 
G_u\in -S,\forall u\in \U\; \Longrightarrow \la c,x\ra \ge r_0.
\end{equation}
As (a) holds, it  follows  from Proposition \ref{lem_FL} that  
$$(c,r_0)\in -\oco\mathcal{M}_0=  - \mathcal{M}_0 =\  - \bigcup_{(u,\lambda)\in \U\times S^+} \epi (\lambda G_u)^\ast, $$  
and so,  by \eqref{eq_3.3bbb} and the weak duality \eqref{eq_2.2ff}, we get 
$$ r_0 \leq \sup \{ r: (c, r) \in -\mathcal{M}_0\} = \sup ({\rm RD}_c) \leq r_0 = \inf (\rm{RP}_c),   $$ 
yielding $r_0 = \sup \{ r: (c, r) \in -\mathcal{M}_0\} = \sup ({\rm RD}_c)  = \inf (\rm{RP}_c)$.   
As 
 $r_0 \in  \{ r: (c, r) \in -\mathcal{M}_0\} $ there exist $(\bar u, \bar \lambda) \in \U \times S^+$ satisfying (by \eqref{eq_3.3bbb})  
  $$r_0 = -(\bar \lambda G_{\bar u})^\ast(-c) = \max ({\rm RD}_c)  = \inf (\rm{RP}_c), $$   
which means that (b) holds.

$\bullet$  [(b)$\Longrightarrow $(a)]  Assume that (b) holds.  To prove (a),  it suffices to show that $\oco\mathcal{M}_0\subset \mathcal{M}_0$. Take $(c,r)\in -\oco \mathcal{M}$. It  follows from  Proposition \ref{lem_FL} that \eqref{eq_3.4bbb} holds with $r_0=r$, which, taking  (b) and \eqref{eq_3.3bbb}   into account,   entails    
$$r\le    r_0= \inf  ({\rm RP}_c)=\max ({\rm RD}_c) = \max_{(u,\lambda)\in \U\times S^+}    [-(\lambda G_u)^\ast(-c)].$$  
This means that there exists $(\bar u, \bar \lambda) \in  \U \times S^+$ such that $ (-c, -r_0 ) \in \epi (\bar \lambda G_{\bar u})^\ast$. Now, as    $r\le r_0 $, one has     $(-c,- r )\in  \epi (\bar \lambda G_{\bar u})^\ast $, and hence, $(c, r) \in           -\mathcal{M}_0$. We have proved that  $\oco\mathcal{M}_0\subset \mathcal{M}_0$ and the proof is complete. 
 \end{proof}

We now  provide some  sufficient  conditions for the convexity and closedness of  the robust moment cone $\mathcal{M}_0$. Assume from now to end this section that $\U$ is a subset of some topological vector space. The next result is a consequence of  
\cite[Propositions 5.1, 5.2]{DL18ACTA}.

\begin{proposition}\label{prop_conclo}
\label{pro_suffcond}   Assume that that $\U$ is a subset of some topological vector space and $\rm{ int }\, S \neq \emptyset$. Then   

{\rm (i)} If the collection $(u\mapsto G_u(x))_{x\in X}$ is uniformly $S^+$-concave, then $\mathcal{M}_0$ is convex. 

{\rm (ii)} If \  $\U$ is a compact set,  $Z$  is  a normed space, $u\mapsto G_u(x)$ is $S^+$-usc for all $x\in X$,  and the following Slater-type condition holds:  
$$(C_0)  \ \ \ \ \ \ \ \qquad  \qquad\forall u\in \U,\; \exists x_u\in X: G_u(x_u)\in-\inte S,  \ \ \ \ \ \ \ \qquad  \ \ \ \ \ \ \ \qquad $$
then $\mathcal{M}_{0}$ is  closed.
\end{proposition}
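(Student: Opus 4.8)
\textbf{Proof plan for Proposition \ref{prop_conclo}.}

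The plan is to reduce both assertions to the cited results \cite[Propositions 5.1, 5.2]{DL18ACTA} by identifying the relevant objects. For part (i), I would first rewrite the robust moment cone in a form amenable to the abstract machinery. Recall $\mathcal{M}_0 = \bigcup_{(u,\lambda)\in \U\times S^+}\epi(\lambda G_u)^\ast$; one sees that $(x^\ast,r)\in\mathcal{M}_0$ if and only if there is $(u,\lambda)\in\U\times S^+$ with $\la x^\ast,x\ra - (\lambda G_u)(x)\le r$ for all $x\in X$, i.e. $r\ge \sup_{x\in X}\big[\la x^\ast,x\ra - (\lambda G_u)(x)\big]$. The key point is that the uniform $S^+$-concavity of the collection $(u\mapsto G_u(x))_{x\in X}$ is exactly the hypothesis needed so that, given two points $(x_1^\ast,r_1),(x_2^\ast,r_2)\in\mathcal{M}_0$ witnessed by $(u_1,\lambda_1)$ and $(u_2,\lambda_2)$, the uniform concavity supplies a single pair $(\bar u,\bar\lambda)\in\U\times S^+$ with $(\lambda_1 G_{u_i})(u_i)\le$ \dots --- more precisely, with $(\bar\lambda G_{\bar u})(x)\le (\lambda_1 G_{u_1})(x) + (\lambda_2 G_{u_2})(x)$ simultaneously for all $x\in X$ (reading the definition with $U=\U$ and $H_x(\cdot) = G_{(\cdot)}(x)$ for each $x\in X$). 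Combining this with the elementary fact that a sum of two conjugates dominates the conjugate of the sum, one gets that the midpoint (or any convex combination) of $(x_1^\ast,r_1)$ and $(x_2^\ast,r_2)$ lies in $\epi(\bar\lambda G_{\bar u})^\ast\subset\mathcal{M}_0$; since $\mathcal{M}_0$ is already a cone, convexity follows. I would either carry this out directly or simply invoke \cite[Proposition 5.1]{DL18ACTA} after checking the dictionary.

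For part (ii), the strategy is again to quote \cite[Proposition 5.2]{DL18ACTA}, so the work is in verifying its hypotheses in the present notation. The ingredients are: $\U$ compact, $Z$ normed (so that $S^+$ has a suitable weak$^\ast$-compact base and one can extract convergent nets/subnets of the multipliers $\lambda_\alpha$ after normalization), the map $u\mapsto G_u(x)$ being $S^+$-usc for each fixed $x$ (which, by Remark \ref{rem_2.1eeee}, in particular gives positive $S$-usc behaviour and lets one pass limits through the pairing $\la\lambda,\cdot\ra$), and the Slater-type condition $(C_0)$ guaranteeing $G_u(x_u)\in -\inte S$ for each $u$, which is the standard device that prevents the multiplier sequence from ``escaping to infinity'' when one tries to close $\mathcal{M}_0$. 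Concretely, to show $\mathcal{M}_0$ closed I would take a net $(x_\alpha^\ast,r_\alpha)\in\mathcal{M}_0$ converging to $(x^\ast,r)$, write each element via a pair $(u_\alpha,\lambda_\alpha)$, use compactness of $\U$ to assume $u_\alpha\to\bar u$, use $(C_0)$ at $\bar u$ together with the $S^+$-usc property to bound $\|\lambda_\alpha\|$, extract a weak$^\ast$-convergent subnet $\lambda_\alpha\overset{*}{\rightharpoonup}\bar\lambda\in S^+$, and then pass to the limit in the defining inequality $(\lambda_\alpha G_{u_\alpha})^\ast(x_\alpha^\ast)\le r_\alpha$ using the $S^+$-usc definition (Definition \ref{def_1}) to conclude $(\bar\lambda G_{\bar u})^\ast(x^\ast)\le r$, i.e. $(x^\ast,r)\in\mathcal{M}_0$.

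The main obstacle I anticipate is the boundedness-of-multipliers step in (ii): one must use the Slater condition quantitatively --- evaluating the inequality defining membership in $\epi(\lambda_\alpha G_{u_\alpha})^\ast$ at the Slater point $x_{\bar u}$ (or at nearby points $x_{u_\alpha}$, which requires some care since $(C_0)$ gives a Slater point per parameter, not a uniform one, and this is where compactness of $\U$ and the $S^+$-usc hypothesis must cooperate), and deducing from $G_{\bar u}(x_{\bar u})\in -\inte S$ that $\la\lambda_\alpha, -G_{u_\alpha}(x_{\bar u})\ra$ controls $\|\lambda_\alpha\|$ uniformly. Once the multipliers are shown to lie in a weak$^\ast$-compact set, the remaining limiting argument is routine given the $S^+$-usc definitions already set up. Since the cleanest route is to cite \cite[Propositions 5.1 and 5.2]{DL18ACTA} verbatim after matching notation ($U\leftrightarrow\U$, the collection $(u\mapsto G_u(x))_{x\in X}$, $S^+$-concavity $\leftrightarrow$ $S^+$-convexity of $-G$), I would keep the written proof short: state the correspondence, note that $\mathcal{M}_0$ here is precisely the moment set studied there, and refer to those propositions for the two conclusions.
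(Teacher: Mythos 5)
Your proposal takes essentially the same route as the paper, which proves this proposition simply by invoking \cite[Propositions 5.1, 5.2]{DL18ACTA} after the obvious identification of notation. One small caution in your auxiliary sketch for (i): uniform $S^+$-concavity of $(u\mapsto G_u(x))_{x\in X}$ yields a pair $(\bar u,\bar\lambda)$ with $(\lambda_1 G_{u_1})(x)+(\lambda_2 G_{u_2})(x)\le(\bar\lambda G_{\bar u})(x)$ for all $x$ (you wrote the reverse inequality, which is the convexity direction), and it is precisely this direction that makes $(\bar\lambda G_{\bar u})^\ast(x_1^\ast+x_2^\ast)\le r_1+r_2$ and hence the cone $\mathcal{M}_0$ closed under addition.
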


\begin{remark}\label{rem_2.1}
If\  \   $\U$ is a singleton then  it is easy to see that the assumption of Proposition  \ref{pro_suffcond}(i) automatically holds, and consequently, $\mathcal{M}_0$ is convex. Moreover, if the Slater  condition $(C_0)$ holds then   $\mathcal{M}_0$ is closed.
\end{remark}

\begin{remark} \label{rem24-nw} It is worth noticing that the Proposition \ref{prop_conclo}  and the next Corollary \ref{cor_3.1bis} 
constitute generalizations of Proposition 2 and Corollary 1 in \cite{GJLL}, respectively.  Propositions  \ref{convexity-N}-\ref{closedness-N} on the sufficient conditions for  the convexity and closedness of moment cones are of the same line of generalization which show the  role played by the Slater constraint qualification condition. 
\end{remark} 

\begin{corollary} [Sufficient condition for stable robust strong duality of $({\rm RP}_c)$]\label{cor_3.1bis}
Assume that the following conditions holds:

{\rm (i)} \ $\U$ is a  compact set, $Z$ is a normed space, 

{\rm (ii)} $(u\mapsto G_u(x))_{x\in X}$ is uniformly $S^+$-concave,

{\rm (iii)} $u\mapsto G_u(x)$ is $S^+$-usc for all $x\in X$,

{\rm (iv)} The Slater-condition $(C_0)$ holds.

\noindent
Then, the  stable robust strong duality holds for the pair {$({\rm RP}_c)-({\rm RD}_c)$}.
\end{corollary}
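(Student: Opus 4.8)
The plan is to deduce Corollary~\ref{cor_3.1bis} directly by combining Proposition~\ref{prop_conclo} with Theorem~\ref{thm_StrD}. The logical skeleton is short: conditions (ii) and the standing hypothesis $\inte S \ne \emptyset$ feed Proposition~\ref{prop_conclo}(i) to give convexity of $\mathcal{M}_0$; conditions (i), (iii), (iv) together with $\inte S \ne \emptyset$ feed Proposition~\ref{prop_conclo}(ii) to give closedness of $\mathcal{M}_0$; hence statement (a) of Theorem~\ref{thm_StrD} holds, and Theorem~\ref{thm_StrD} then yields statement (b), which is exactly the asserted stable robust strong duality. So the proof is essentially a two-line citation chain once the hypotheses are matched up.

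First I would check that all hypotheses of the two cited results are in force. Proposition~\ref{prop_conclo} requires $\U$ to be a subset of some topological vector space; this is part of the standing assumption of the subsection ("Assume from now to end this section that $\U$ is a subset of some topological vector space"), and it also requires $\inte S \ne \emptyset$, which is again assumed there. For part~(ii) of that proposition I need $\U$ compact, $Z$ normed, and $u \mapsto G_u(x)$ to be $S^+$-usc for every $x \in X$ — these are precisely Corollary~\ref{cor_3.1bis}(i) and~(iii) — plus the Slater-type condition $(C_0)$, which is Corollary~\ref{cor_3.1bis}(iv). For part~(i) I need $(u \mapsto G_u(x))_{x\in X}$ uniformly $S^+$-concave, which is Corollary~\ref{cor_3.1bis}(ii). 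Thus $\mathcal{M}_0$ is both convex and closed, i.e.\ Theorem~\ref{thm_StrD}(a) holds.

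Then I would invoke Theorem~\ref{thm_StrD}. One slight subtlety: that theorem carries the hypothesis $r_0 := \inf({\rm RP}_c) > -\infty$. To apply the equivalence (a)$\Leftrightarrow$(b) cleanly for \emph{every} $c$, I would note that if $\inf({\rm RP}_c) = -\infty$ for some $c$, then by weak duality \eqref{eq_2.2ff} also $\sup({\rm RD}_c) = -\infty$, so the equality $\inf({\rm RP}_c) = \max({\rm RD}_c)$ has to be read in the $\overline{\mathbb{R}}$-sense and holds trivially (with the convention that the supremum over the feasible dual set is $-\infty$); and for those $c$ with $r_0 \in \mathbb{R}$, Theorem~\ref{thm_StrD} applies verbatim and gives $\inf({\rm RP}_c) = \max({\rm RD}_c)$. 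Either way the stable robust strong duality of Definition~\ref{robust-dual} is obtained for all $c \in X^\ast$.

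I do not anticipate a genuine obstacle here — the corollary is a straightforward packaging of Proposition~\ref{prop_conclo} and Theorem~\ref{thm_StrD}. The only place demanding a word of care is the matching of the "uniformly $S^+$-concave" hypothesis in (ii) with the "collection $(u \mapsto G_u(x))_{x\in X}$" phrasing in Proposition~\ref{prop_conclo}(i), and the handling of the $r_0 = -\infty$ case so that the statement "for all $c \in X^\ast$" is literally correct; both are routine. Accordingly the written proof can be kept to a few sentences that simply cite Proposition~\ref{prop_conclo}(i)--(ii) for convexity and closedness of $\mathcal{M}_0$ and then Theorem~\ref{thm_StrD} for the conclusion.
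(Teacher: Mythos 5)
Your proposal is correct and matches the paper's own proof, which is exactly the one-line citation of Proposition \ref{pro_suffcond} (for convexity and closedness of $\mathcal{M}_0$) followed by Theorem \ref{thm_StrD}; your extra remark on the $r_0=-\infty$ case is a careful touch the paper leaves implicit, but the route is the same.
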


\begin{proof}
The conclusion follows  from Theorem \ref{thm_StrD} and Proposition  \ref{pro_suffcond}.
\end{proof}


 We  now consider a special  case of  $(\rm{RP}_c)$   where,  for each  $u\in \U$, $G_u$ is  an  affine mapping, say $\bar G_u$,   defined as 
$$\bar G_u(.):=A_u(.)-\omega_u,\quad \forall u\in \U, $$
where  $(A_u)_{u\in\U}\subset \L(X,Z)$ and $(\omega_u)_{u\in \U}\subset Z$. 
Let   $c\in X^\ast$. The problem $(\rm{RP}_c)$  becomes\footnote{The model of Problem  $({\rm RLP}_c) $ was considered  in \cite{DGLM17-Optim} where some characterizations of its solutions were proposed.} 
\begin{align}\label{problem_RLP}
({\rm RLP}_c)\qquad& {\inf  }\;  \la c,x\ra\\
\textrm{subject to  }  &A_u(x)\in \omega_u-S, \ \forall u\in \U, x \in  X.\notag
\end{align}

For each $(u,\lambda)\in \U\times S^+$, by a simple calculation, one gets
\begin{equation*}
\epi (\lambda \bar G_u)^\ast= (\lambda A_u, \la\lambda, b_u\ra)+\mathbb{R}_+(0_{X^\ast},1). 
\end{equation*}
Here we understand that $\lambda A_u $ is an element of $X^\ast$ with $(\lambda A_u)(x) = \la  \lambda , A_u(x) \ra $, for all $ x \in X$. 
So, the  $\mathcal{M}_0$ defined in \eqref{M0}  collapses to
\begin{equation}
\label{M_1}
\mathcal{M}_1:=  \{(\lambda A_u, \la \lambda, \omega_u\ra),\; (u,\lambda)\in\U\times S^+\} +\mathbb{R}_+ (0_{X^\ast}, 1), 
\end{equation}
and 
one has,  
\begin{equation*}
\inf_{x\in X} \Big\{\la c,x\ra +\lambda G_u(x)\Big\}=\begin{cases}
-\la \lambda,\omega_u\ra &\textrm{if } c=-\lambda A_u\\
-\infty &\textrm{otherwise.}
\end{cases}
\end{equation*}
The dual problem of $(\rm{RLP}_c)$,   specialized from $({\rm RD}_c)$,  turns to be 
\begin{align*}
	({\rm RLD}_c)\qquad& {\sup  }\;  -\la \lambda,\omega_u\ra \\
\textrm{subject to} \ \ &(u,\lambda)\in \U\times S^+,\; c=-\lambda A_u.
\end{align*}

\begin{corollary}[Characterization of stable robust strong duality for $(\rm{RLP}_c)$]
\label{thm_lStrD}
The following statements are equivalent:

{$\rm(a)$} $\mathcal{M}_1 $ is a closed and convex subset  of $X^*\times \mathbb{R}$.
 
{$\rm(b)$}  The  stable robust strong duality holds for the pair {$({\rm RLP}_c)-({\rm RLD}_c)$}, i.e.,
$$\inf ({\rm RLP}_c) = \max ({\rm RLD}_c),  \ \forall c \in X^\ast. $$ 
\end{corollary}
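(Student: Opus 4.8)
The plan is to derive Corollary \ref{thm_lStrD} as a direct specialization of Theorem \ref{thm_StrD}, exploiting the fact that the affine problem $({\rm RLP}_c)$ is exactly $({\rm RP}_c)$ with $G_u = \bar G_u = A_u(\cdot) - \omega_u$, and that under this choice the robust moment cone $\mathcal{M}_0$ collapses to $\mathcal{M}_1$ as recorded in \eqref{M_1}. The subtle point to address is the hypothesis ``$r_0 := \inf({\rm RP}_c) > -\infty$'' appearing in Theorem \ref{thm_StrD}: in the stable statement we quantify over all $c \in X^\ast$, including those for which $\inf({\rm RLP}_c) = -\infty$, so I must check the equivalence still goes through in that degenerate case.

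First I would observe that for the affine data, $({\rm RD}_c)$ specializes to $({\rm RLD}_c)$ exactly as computed in the excerpt (the inner infimum over $x$ is $-\langle\lambda,\omega_u\rangle$ when $c = -\lambda A_u$ and $-\infty$ otherwise), and that the chain of equalities \eqref{eq_3.3bbb} in the proof of Theorem \ref{thm_StrD} gives $\sup({\rm RLD}_c) = \sup\{r : (c,r) \in -\mathcal{M}_1\}$ for every $c$, with weak duality $\sup({\rm RLD}_c) \le \inf({\rm RLP}_c)$ holding always. Next, for the implication (a)$\Rightarrow$(b): if $\mathcal{M}_1$ is closed and convex, then for any $c$ with $\inf({\rm RLP}_c) > -\infty$ Theorem \ref{thm_StrD} applies verbatim (its hypotheses (a) and the finiteness of $r_0$ are met, and feasibility of $\F$ — which I should note is a standing assumption from Section 2.2, $\mathcal{F}\ne\emptyset$ — ensures $r_0 < +\infty$), yielding $\inf({\rm RLP}_c) = \max({\rm RLD}_c)$. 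For the remaining case $\inf({\rm RLP}_c) = -\infty$, I would argue directly: by Proposition \ref{lem_FL} (with $\mathcal{M}_0 = \mathcal{M}_1$ closed convex) the implication ``$\bar G_u(x) \in -S\ \forall u \Rightarrow \langle c,x\rangle \ge r$'' holds for no $r \in \R$, hence $\sup\{r : (c,r) \in -\mathcal{M}_1\} = -\infty = \inf({\rm RLP}_c)$, and the supremum in $({\rm RLD}_c)$ is over the empty set (or $-\infty$), so both sides equal $-\infty$ and, interpreting $\max$ of the empty/constant-$-\infty$ family as $-\infty$, stable robust strong duality still holds trivially.

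For the converse (b)$\Rightarrow$(a), I would follow the (b)$\Rightarrow$(a) argument in Theorem \ref{thm_StrD}: it suffices to show $\oco\mathcal{M}_1 \subset \mathcal{M}_1$. Take $(c,r) \in -\oco\mathcal{M}_1$; by Proposition \ref{lem_FL} the implication ``$\bar G_u(x) \in -S\ \forall u \Rightarrow \langle c,x\rangle \ge r$'' holds, so $r \le \inf({\rm RLP}_c) = \max({\rm RLD}_c)$ by (b). This produces $(\bar u, \bar\lambda) \in \U\times S^+$ with $-(\bar\lambda \bar G_{\bar u})^\ast(-c) \ge r$, i.e. $(-c,-\inf({\rm RLP}_c)) \in \epi(\bar\lambda\bar G_{\bar u})^\ast$ and hence $(-c,-r) \in \epi(\bar\lambda\bar G_{\bar u})^\ast \subset -\,(-\mathcal{M}_1)$ since $r \le \inf({\rm RLP}_c)$ and epigraphs are closed upward; thus $(c,r) \in -\mathcal{M}_1$, and convexity of $\mathcal{M}_1$ follows because $\oco\mathcal{M}_1$ is convex and equals $\mathcal{M}_1$.

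The main obstacle I anticipate is purely bookkeeping rather than mathematical: making the case $\inf({\rm RLP}_c) = -\infty$ rigorous requires being careful about the meaning of $\max({\rm RLD}_c)$ when the dual feasible set $\{(u,\lambda) : c = -\lambda A_u\}$ may be empty, and about whether Theorem \ref{thm_StrD}'s finiteness hypothesis can simply be bypassed in that case; the cleanest route is to note that $\mathcal{M}_1$ closed convex forces $-\mathcal{M}_1$ to be exactly the set of $(c,r)$ satisfying the Farkas implication, so the identity $\sup({\rm RLD}_c) = \sup\{r : (c,r)\in -\mathcal{M}_1\} = \inf({\rm RLP}_c)$ holds for every $c \in X^\ast$ uniformly, and the attainment of the dual sup is automatic whenever it is finite because $-\mathcal{M}_1$ is then closed and the slice $\{r : (c,r) \in -\mathcal{M}_1\}$ is a closed half-line. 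Everything else is a mechanical transcription of the proof of Theorem \ref{thm_StrD} with $G_u \rightsquigarrow \bar G_u$ and $\mathcal{M}_0 \rightsquigarrow \mathcal{M}_1$, so I would keep the write-up short and simply invoke Theorem \ref{thm_StrD} together with formula \eqref{M_1}.
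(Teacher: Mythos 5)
Your proposal is correct and takes essentially the same route as the paper, which disposes of the corollary in one line as a direct consequence of Theorem \ref{thm_StrD} applied with $\bar G_u(\cdot)=A_u(\cdot)-\omega_u$, under which $\mathcal{M}_0$ collapses to $\mathcal{M}_1$ and $({\rm RD}_c)$ specializes to $({\rm RLD}_c)$. Your extra bookkeeping for the degenerate case $\inf({\rm RLP}_c)=-\infty$ is a sound refinement of a point the paper leaves implicit (Theorem \ref{thm_StrD} is stated under the hypothesis $\inf({\rm RP}_c)>-\infty$), but it does not alter the substance of the argument.
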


The previous corollary is a direct consequence of Theorem \ref{thm_StrD}. Moreover, apply Corollary \ref{cor_3.1bis} with $\bar G_u(.)=A_u(.)-\omega_u$ one also gets a sufficient condition for stable robust strong duality for $({\rm RLP}_c)$.

\section{Robust Linear Infinite Problem and Its Robust Duals }  
We retain the notations in Section 2 and let $c\in X^\ast$.    

\subsection{Statement of Robust Linear Infinite Problems and Their  Robust Duals} 
 Consider the {\it linear infinite programming}  with {\it  uncertain input-parameters} of the form:
\begin{align}
({\rm ULIP}_c)\quad &\inf\;  \langle c,x\rangle \notag\\
\textrm{subject to }\ \  &  \langle a_t,x \rangle \le b_t ,\;\forall t\in T,  x \in X, 
\end{align}
where   $(a_t , b_t)$  belongs to an uncertainty set $\U_t$ with $\emptyset\ne\U_t\subset X^*\times \mathbb{R}$ for all $t\in T$.

The {\it robust counterpart} of $({\rm ULIP}_c)$ is
\begin{align}
({\rm RLIP}_c)\quad &\inf\;  \langle c,x\rangle \notag\\
\textrm{subject to  }\ \  &  \langle x^\ast,x \rangle \le r,\;\forall (x^\ast,r)\in\U_t,\; \forall t\in T,  x \in X.
\end{align}
Assume  that the problem $({\rm RLIP}_c)$ is feasible for each $c \in X^\ast$, i.e., 
$$\F:=\{x\in X: \langle x^*,x \rangle \le r ,\;\forall (x^*,r)\in\U_t,\; \forall t\in T\}\ne \emptyset, \ \forall c \in X^\ast$$
and set
$$\mathscr{U}:=\prod_{t\in T}\U_t\qquad \textrm{and}\quad \mathscr{V}:=\bigcup_{t\in T}\U_t.$$
By convention, we write $v = (v^1, v^2) \in X^\ast\times \mathbb{R}$ and  $u = (u_t)_{t \in T} \in \mathscr{U}$, with $u_t = (u^1_t, u^2_t) \in \U_t$. For brevity, we also write: $u=(u^1_t, u^2_t)_{t\in T} \in  \mathscr{U}$ instead of  $u=((u^1_t, u^2_t))_{t\in T} \in  \mathscr{U}$.

\ms

The robust problem of the model  (RLIP$_c$) was considered in several earlier works such as  
 \cite{DGLM17-Optim},  \cite{GJLL} (where $X = \R^n$, i.e., a  robust semi-infinite linear problem), \cite{LJL11}  where $X$ is a Banach space, $T$ is finite, objective function is a convex function, and for each $t \in T$, $\U_t$ has a special form  (problem (SP), page 2335), and  in  \cite{DGLV17-Robust} with a bit more general on  constraint linear inequalities, concretely, for all $t\in T$, $(x^\ast,r)$ is a function defined on $\U_t$ instead of $(x^\ast, r)\in \U_t$.


We now propose variants of robust dual problems for  $({\rm RLIP}_c)$:
\begin{align}
({\rm RLID}_c^1)\quad &\sup\;  [-\lambda v^2]\notag\\
\textrm{s.t.}\ \  & v\in\mathscr{V},\;\lambda\ge 0,\; c=-\lambda v^1,\notag\\
({\rm RLID}_c^2)\quad  &\sup\; \left [-\sum_{u\in \supp \lambda}\lambda_u u^2_t\right]\notag\\
\textrm{s.t.}\ \  &  t\in T,\; \lambda \in \mathbb{R}^{(\mathscr{U})}_+,\; c=-\sum_{u\in \supp \lambda}\lambda_u u^1_t, \notag\\
({\rm RLID}_c^3)\quad  &\sup\; \left [-\sum_{t\in \supp \lambda}\lambda_t u^2_t\right]\notag\\
\textrm{s.t.}\ \  &  u\in\mathscr{U},\;\lambda \in \mathbb{R}^{(T)}_+,\; c=-\sum_{t\in \supp\lambda}\lambda_t u^1_t,\notag\\
({\rm RLID}_c^4)\quad  &\sup_{\lambda\ge 0,\; t\in T} \inf_{x\in X}\sup_{v\in\U_t}[\la c+\lambda v^1,x\ra -\lambda v^2],\notag\\
({\rm RLID}_c^5)\quad  &\sup_{\lambda\ge 0,\; u\in \mathscr{U}} \inf_{x\in X}\sup_{t\in T}[\la c+\lambda u^1_t,x\ra -\lambda u^2_t],\notag\\
({\rm RLID}_c^6)\quad &\sup\;  \left[-\sum_{v\in \supp \lambda}\lambda_v v^2\right]\notag\\
\textrm{s.t. }\ \  &   \lambda \in \mathbb{R}^{(\mathscr{V})}_+,\; c=-\sum_{v \in \supp\lambda}\lambda_{v} v^1,\notag\\
({\rm RLID}_c^7)\quad &\sup_{\lambda\ge 0} \inf_{x\in X}\sup_{v\in \mathscr{V}}[\la c+\lambda v^1,x\ra -\lambda v^2]. \notag
\end{align}

It is worth observing firstly  that  
$({\rm RLID}_c^3)$ and $({\rm RLID}_c^6)$   are  (ODP) and  (DRSP)  in \cite{GJLL}, respectively. These two classes are   also    special case of (OLD)   and (RLD) in \cite{JL2} (where the  constraint functions are affine) and of 
  (RLD$^O$) and  (RLD$^C$) in \cite{DGLV17-Robust}, respectively.

The {\it  ``robust strong  duality (and also, stable  robust strong  duality)  holds for the pair $({\rm RLIP}_c)-({\rm RLID}^i_c)$"}, $i = 1, 2, \ldots, 7$,    is understood  as in the Definition \ref{robust-dual}. 
Note that robust strong  duality holds for $({\rm RLIP}_c)-({\rm RLID}^3_c)$ is known as ``primal worst equals dual best problem'' with the attainment of dual problem  \cite{DGLV17-Robust},    \cite{GJLL}.

\subsection{Relationship Between The Values of Dual Problems and Weak Duality} 

In this subsection we will establish some relations between  the values of the dual problems $({\rm RLID}_c^i)$ to each other and the weak duality  to each of the dual pairs   $({\rm RLIP}_c) - ({\rm RLID}_c^i) $, $i = 1, 2, \cdots, 7$. 

\begin{proposition}\label{pro_3.1ff}
 One has
\begin{equation} \label{eq-prop31}
\sup ({\rm RLID}_c^1)\le 
\begin{array}{c}\sup ({\rm RLID}_c^2)\\\sup ({\rm RLID}_c^3)\end{array}
\le \sup ({\rm RLID}_c^6).
\end{equation}
\end{proposition}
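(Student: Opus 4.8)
The strategy is to unwind the definitions of the dual problems $({\rm RLID}_c^1)$, $({\rm RLID}_c^2)$, $({\rm RLID}_c^3)$, and $({\rm RLID}_c^6)$ and exhibit, for each inequality, a map sending a feasible point of the ``smaller'' problem to a feasible point of the ``larger'' one with no smaller objective value. All four problems are sup-type, so in each case it suffices to show that every feasible point of the left-hand problem gives rise to a feasible point of the right-hand problem with at least the same objective value; taking suprema then yields the claimed inequality.

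\textbf{Step 1: $\sup ({\rm RLID}_c^1) \le \sup ({\rm RLID}_c^3)$ (and similarly $\le \sup({\rm RLID}_c^2)$).} Take a feasible $(v,\lambda)$ for $({\rm RLID}_c^1)$, so $v\in\mathscr V$, $\lambda\ge 0$, $c=-\lambda v^1$. Since $v\in\mathscr V=\bigcup_{t\in T}\U_t$, pick $t_0\in T$ with $v\in\U_{t_0}$. For $({\rm RLID}_c^3)$ I would build $u=(u_t)_{t\in T}\in\mathscr U$ by choosing $u_{t_0}:=v$ and $u_t$ arbitrary in $\U_t$ for $t\ne t_0$, and set $\mu\in\mathbb R_+^{(T)}$ to be the generalized sequence supported at the single index $t_0$ with $\mu_{t_0}=\lambda$. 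Then $-\sum_{t\in\supp\mu}\mu_t u^1_t = -\lambda v^1 = c$, so $(u,\mu)$ is feasible for $({\rm RLID}_c^3)$, and its objective value is $-\sum_{t\in\supp\mu}\mu_t u^2_t = -\lambda v^2$, matching the value of $(v,\lambda)$. Hence $\sup({\rm RLID}_c^1)\le\sup({\rm RLID}_c^3)$. The inequality $\sup({\rm RLID}_c^1)\le\sup({\rm RLID}_c^2)$ is symmetric: given the same $(v,\lambda)$ with $v\in\U_{t_0}$, keep $t:=t_0\in T$ and take $\nu\in\mathbb R_+^{(\mathscr U)}$ supported at a single point $u^\star\in\mathscr U$ with $u^\star_{t_0}=v$ and weight $\lambda$; then $c=-\lambda v^1=-\sum_{u\in\supp\nu}\nu_u u^1_{t_0}$ and the value is again $-\lambda v^2$.

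\textbf{Step 2: $\sup ({\rm RLID}_c^2)\le\sup({\rm RLID}_c^6)$ and $\sup({\rm RLID}_c^3)\le\sup({\rm RLID}_c^6)$.} For $({\rm RLID}_c^3)\le({\rm RLID}_c^6)$: take $(u,\lambda)$ feasible for $({\rm RLID}_c^3)$, i.e.\ $u\in\mathscr U$, $\lambda\in\mathbb R_+^{(T)}$, $c=-\sum_{t\in\supp\lambda}\lambda_t u^1_t$. For each $t\in\supp\lambda$ we have $u_t=(u^1_t,u^2_t)\in\U_t\subset\mathscr V$; define $\mu\in\mathbb R_+^{(\mathscr V)}$ by assigning to the point $u_t\in\mathscr V$ the weight $\lambda_t$ (summing weights if distinct indices $t$ happen to produce the same element of $\mathscr V$). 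Then $\sum_{v\in\supp\mu}\mu_v v^1=\sum_{t\in\supp\lambda}\lambda_t u^1_t=-c$ and $\sum_{v\in\supp\mu}\mu_v v^2=\sum_{t\in\supp\lambda}\lambda_t u^2_t$, so $\mu$ is feasible for $({\rm RLID}_c^6)$ with the same objective value. The bound $\sup({\rm RLID}_c^2)\le\sup({\rm RLID}_c^6)$ is handled the same way: given $(t,\lambda)$ feasible for $({\rm RLID}_c^2)$ with $\lambda\in\mathbb R_+^{(\mathscr U)}$ and $c=-\sum_{u\in\supp\lambda}\lambda_u u^1_t$, note each $u^1_t$ here is the $t$-component of some $u\in\mathscr U$, and $(u^1_t,u^2_t)=u_t\in\U_t\subset\mathscr V$; transfer the weights $\lambda_u$ to the corresponding points $u_t\in\mathscr V$ to get a feasible $\mu$ for $({\rm RLID}_c^6)$ of equal value.

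\textbf{Expected main obstacle.} There is no deep obstacle here — the result is a bookkeeping exercise about re-indexing generalized finite sequences. The one point demanding care is the possible many-to-one collapse when transferring weights between the index sets $\mathscr U$, $T$, and $\mathscr V$ (distinct $t$'s, or distinct $u$'s, can map to the same element of $\mathscr V$, and an element of $\mathscr V$ lies in several $\U_t$); one must add weights correctly and check that $\supp$ stays finite, which it does since only finitely many original weights are nonzero. A second minor subtlety is the harmless use of the feasibility hypothesis $\F\ne\emptyset$ (or at least $\U_t\ne\emptyset$ for all $t$) to pick the ``filler'' components $u_t\in\U_t$ for $t\notin\supp\lambda$ when constructing a point of $\mathscr U$; this is guaranteed by the standing assumption $\emptyset\ne\U_t$ for all $t\in T$. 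Assembling the two steps gives \eqref{eq-prop31}.
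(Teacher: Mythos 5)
Your proposal is correct and follows essentially the same route as the paper: the paper also proves each inequality by mapping a feasible multiplier of the smaller problem to one of the larger problem with equal objective value (phrased there as inclusions $E_i\subset E_j$ of the attainable-value sets for $(i,j)\in\{(1,2),(1,3),(2,6),(3,6)\}$), and it handles the many-to-one collapse of weights exactly as you do, via a set-valued map that aggregates the weights of all indices sent to the same element of $\mathscr V$.
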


\begin{proof}
Observe that, for $k=1,2,3,6$, it holds $\sup({\rm RLID}_c^k)=\sup E_k$ with
\begin{align}
E_1&:=\{\alpha: v\in \mathscr{V},\; \lambda\ge 0,\; (c,\alpha)=-\lambda v\},\\
E_2&:=\{\alpha: t\in T,\; \lambda \in \mathbb{R}^{(\mathscr{U})}_+,\; (c,\alpha)=-\sum_{u\in\supp\lambda}\lambda_u u_t\},\\
E_3&:=\{\alpha: u\in\mathscr{U},\;\lambda \in \mathbb{R}^{(T)}_+,\; (c,\alpha)=-\sum_{t\in \supp\lambda}\lambda_t u_t\},\\
E_6&:=\{\alpha:  \lambda \in \mathbb{R}^{(\mathscr{V})}_+,\; (c,\alpha)=-\sum_{v \in \supp\lambda}\lambda_{v} v\}.\label{eq_6.4eeee}
\end{align}
So, to prove \eqref{eq-prop31}, it suffices  to verify that $E_i\subset E_j$ for $(i,j)\in \{(1,2), (1,3), (2,6), (3.6)\}$.

$\bullet$ [$E_1\subset E_2$] Take $\bar \alpha\in E_1$. Then, there are $\bar v\in  \mathscr{V}$ and $\bar{\lambda}\ge 0$ such that $(c,\bar v)=-\bar \lambda \bar v$. Now, take $\bar t\in T$ and $\bar u\in \mathscr{U}$ such that $\bar u_{\bar t}=\bar v$.  Define $\bar\lambda \in \mathbb{R}^{(\mathscr{U})}_+$  by $\bar \lambda_{\bar u}=\bar \lambda$ and $\bar \lambda_u=0$ whenever $u\ne \bar u$. Then, it is easy to see that
$$-\sum_{u\in\supp\bar \lambda}\bar \lambda_u u_{\bar t}=-\bar \lambda_{\bar u} \bar u_{\bar t}=-\bar\lambda \bar v=(c,\bar \alpha), $$
yielding  $\bar \alpha \in E_2$.

$\bullet$ [$E_1\subset E_3$] Can be done by using  the same argument as in the proof of $E_1\subset E_2$, just replace $\bar \lambda\in  \mathbb{R}^{(\mathscr{U})}_+$ by $\bar \lambda\in \mathbb{R}^{(T)}_+$ such that $\bar\lambda_{\bar t}=\bar\lambda$ and $\bar\lambda_t=0$ for all $t\ne \bar t$.

$\bullet$ [$E_2\subset E_6$] Take $\bar \alpha\in E_2$. Then, there exists  $(\bar t, \bar \lambda)\in  T\times \mathbb{R}^{(\mathscr{U})}_+$ satisfying 
$$-\sum_{u\in \supp\bar\lambda}\bar \lambda_u  u_{\bar t} = (c,\bar\alpha).$$
Consider the set-valued mapping $\mathcal{K}\colon \mathscr{V}\rightrightarrows\mathscr{U}$ defined by 
$$\mathcal{K}(v):=\{u\in \supp\bar\lambda: u_{\bar t}=v\}.$$
It is easy to see that the decomposition $\supp \bar \lambda =\bigcup_{v\in \mathscr{V}} \mathcal{K}(v)$ holds. Moreover, as $\supp\bar\lambda$ is finite,  $\dom \mathcal{K}$ is also finite (where $\dom \mathcal{K}:=\{v\in \mathscr{V}: \mathcal{K}(v)\ne \emptyset\}$).
 Take $\hat\lambda\in \mathbb{R}^{(\mathscr{V})}_+$ such that $\hat \lambda_{v}=\sum_{u\in \mathcal{K}(v)}\bar \lambda_u$ if $v\in \dom \mathcal{K}$ and  $\hat \lambda_{v}=0$ if $v\notin \dom \mathcal{K}.$
Then, one has
$$-\sum_{v\in \supp \hat\lambda}\hat\lambda_v v=-\sum_{v\in \dom \mathcal{K}} \sum_{u\in\mathcal{K}(v)} \bar \lambda_u u_{\bar t}=-\sum_{u\in \supp\bar\lambda} \bar\lambda_u u_{\bar t}= (c,\bar \alpha), $$
yielding $\bar\alpha\in E_6$.

$\bullet$ [$E_3\subset E_6$] Similar to the proof of [$E_2\subset E_6$].
\end{proof}

\begin{proposition}\label{pro_3.2ff}
 One has
\begin{equation} \label{eq-prop32} 
\sup ({\rm RLID}_c^1)\le 
\begin{array}{c}\sup ({\rm RLID}_c^4)\\\sup ({\rm RLID}_c^5)\end{array}
\le \sup ({\rm RLID}_c^7).\end{equation} 
\end{proposition}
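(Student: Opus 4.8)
The plan is to prove the two inequality chains in \eqref{eq-prop32} by the same set-inclusion technique used in the proof of Proposition \ref{pro_3.1ff}, but now the dual values are written as suprema over $\lambda$ of inner infima rather than as suprema of scalar sets. First I would record, for each $i\in\{1,4,5,7\}$, the function
$$\varphi_i(c,\lambda,\cdot):=\inf_{x\in X}\sup_{(\cdot)}[\la c+\lambda(\cdot)^1,x\ra-\lambda(\cdot)^2],$$
so that $\sup({\rm RLID}_c^4)=\sup_{\lambda\ge0,\,t\in T}\varphi_4(c,\lambda,t)$ with the inner supremum taken over $v\in\U_t$, $\sup({\rm RLID}_c^5)=\sup_{\lambda\ge0,\,u\in\mathscr U}\varphi_5(c,\lambda,u)$ with inner supremum over $t\in T$, and $\sup({\rm RLID}_c^7)=\sup_{\lambda\ge0}\varphi_7(c,\lambda)$ with inner supremum over $v\in\mathscr V$. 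For $({\rm RLID}_c^1)$ note the standard fact that the value $-\lambda v^2$ under the constraint $c=-\lambda v^1$ equals $\inf_{x\in X}[\la c+\lambda v^1,x\ra-\lambda v^2]$ (the infimum is $-\lambda v^2$ when $c+\lambda v^1=0$ and $-\infty$ otherwise), so $\sup({\rm RLID}_c^1)=\sup_{\lambda\ge0,\,v\in\mathscr V}\inf_{x\in X}[\la c+\lambda v^1,x\ra-\lambda v^2]$.

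Next I would prove the four comparisons $\sup({\rm RLID}_c^1)\le\sup({\rm RLID}_c^4)$, $\sup({\rm RLID}_c^1)\le\sup({\rm RLID}_c^5)$, $\sup({\rm RLID}_c^4)\le\sup({\rm RLID}_c^7)$, $\sup({\rm RLID}_c^5)\le\sup({\rm RLID}_c^7)$, each by exhibiting, for a fixed feasible choice of parameters on the smaller side, a choice on the larger side whose objective dominates it. For $({\rm RLID}_c^1)\le({\rm RLID}_c^4)$: given $(\lambda,v)$ with $v\in\mathscr V$, pick $t\in T$ with $v\in\U_t$; then for every $x$, $\sup_{w\in\U_t}[\la c+\lambda w^1,x\ra-\lambda w^2]\ge\la c+\lambda v^1,x\ra-\lambda v^2$, so taking $\inf_x$ on both sides gives $\varphi_4(c,\lambda,t)\ge\inf_x[\la c+\lambda v^1,x\ra-\lambda v^2]$, which is the $({\rm RLID}_c^1)$ objective at $(\lambda,v)$. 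For $({\rm RLID}_c^1)\le({\rm RLID}_c^5)$: given $(\lambda,v)$, pick $u\in\mathscr U$ and $t\in T$ with $u_t=v$; then $\sup_{s\in T}[\la c+\lambda u^1_s,x\ra-\lambda u^2_s]\ge\la c+\lambda u^1_t,x\ra-\lambda u^2_t=\la c+\lambda v^1,x\ra-\lambda v^2$, and again $\inf_x$ preserves this. For $({\rm RLID}_c^4)\le({\rm RLID}_c^7)$: given $(\lambda,t)$, simply observe $\U_t\subset\mathscr V$, so $\sup_{v\in\mathscr V}[\cdots]\ge\sup_{v\in\U_t}[\cdots]$ pointwise in $x$, hence after $\inf_x$. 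For $({\rm RLID}_c^5)\le({\rm RLID}_c^7)$: given $(\lambda,u)$ with $u\in\mathscr U$, for each $x$ we have $\{(u^1_t,u^2_t):t\in T\}\subset\mathscr V$, so $\sup_{v\in\mathscr V}[\la c+\lambda v^1,x\ra-\lambda v^2]\ge\sup_{t\in T}[\la c+\lambda u^1_t,x\ra-\lambda u^2_t]$, and $\inf_x$ again preserves the inequality.

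The one point that needs a word of care — and which I would flag as the only real subtlety — is the interchange between the pointwise-in-$x$ domination and the order of $\inf_x$: from $f(x)\ge g(x)$ for all $x$ one gets $\inf_x f(x)\ge\inf_x g(x)$, which is exactly the elementary monotonicity of $\inf$ and needs no compactness or minimax hypothesis; this is why every step above goes through verbatim. Collecting the four comparisons yields
$$\sup({\rm RLID}_c^1)\le\min\{\sup({\rm RLID}_c^4),\sup({\rm RLID}_c^5)\}\quad\text{and}\quad\max\{\sup({\rm RLID}_c^4),\sup({\rm RLID}_c^5)\}\le\sup({\rm RLID}_c^7),$$
which is precisely the display \eqref{eq-prop32}, and the proof is complete.
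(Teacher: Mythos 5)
Your proposal is correct and follows essentially the same route as the paper: rewrite $\sup({\rm RLID}_c^1)$ as a sup--inf over $(\lambda,v)\in\mathbb{R}_+\times\mathscr{V}$ and then compare the four values by exchanging $\inf_x$ with the inner suprema. The only cosmetic difference is that the paper invokes the abstract weak minimax inequality $\sup\inf\le\inf\sup$ once and for all, whereas you unroll it via pointwise domination plus monotonicity of $\inf$; the content is identical.
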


\begin{proof}
It is worth noting firstly that, for any non-empty  sets $Y_1$ and $  Y_2$, any function   $f\colon Y_1\times Y_2\to \mathbb{R}$, it always holds
\begin{equation}\label{eq_6.1eee}
\sup_{y_1\in Y_1}\inf_{y_2\in Y_2} f(y_1,y_2)\le \inf_{y_2\in Y_2}\sup_{y_1\in Y_1} f(y_1,y_2).
\end{equation}

By  a simple calculation, one easily gets
\begin{align*}
\sup({\rm RLID}_c^1)&=\sup_{\substack{\lambda\ge 0,\; v \in \mathscr{V}}}\inf _{x\in X}(\la c+\lambda v^1,x\ra-\lambda v^2)\\
&=\sup_{\lambda\ge 0,\; t\in T}\sup_{w\in \U_t}\inf _{x\in X}(\la c+\lambda w^1,x\ra-\lambda w^2)\\
&=\sup_{\lambda\ge 0,\; u\in \mathscr{U}}\sup_{t\in T} \inf_{x\in X}[\la c+\lambda u^1_t,x\ra -\lambda u^2_t]
\end{align*}
(as $\mathscr{V}=\bigcup_{t\in T}\U_t=\{u_t: u\in \mathscr{U},\; t\in T\}$).
So, according to \eqref{eq_6.1eee},

\begin{gather*}
\sup({\rm RLID}_c^1)\le\sup_{\lambda\ge 0,\; t\in T}\inf _{x\in X}\sup_{w\in \U_t}[\la c+\lambda w^1,x\ra-\lambda w^2]= \sup({\rm RLID}_c^4),\\
\sup({\rm RLID}_c^1)\le\sup_{\lambda\ge 0,\; u\in \mathscr{U}} \inf_{x\in X}\sup_{t\in T}[\la c+\lambda u^1_t,x\ra -\lambda u^2_t]= \sup({\rm RLID}_c^5).
\end{gather*}
The other desired inequalities in \eqref{eq-prop32}  follow  from  \eqref{eq_6.1eee} in  a similar way as above.
\end{proof}

The weak duality for  the   primal-dual  pairs of  problems $({\rm RLIP}_c^i) - ({\rm RLID}_c^i)$,  $i = 1, 2, \cdots, 7$, will be given in the next proposition.

\begin{proposition}[Weak duality] \label{weakduality1}
\label{pro_3.3gg}
 One has
\begin{equation} \label{eq-prop32}  
\begin{array}{c}\sup ({\rm RLID}_c^6)\\ \sup ({\rm RLID}_c^7) \\ 
\end{array}
\le \inf ({\rm RLIP}_c).\end{equation} 
Consequently,  $\sup ({\rm RLID}_c^i) 
\le \inf ({\rm RLIP}_c)$ for all $i =1, 2, \cdots, 7$. 
\end{proposition}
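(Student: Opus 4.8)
The plan is to reduce everything to the two ``topmost'' dual problems $({\rm RLID}_c^6)$ and $({\rm RLID}_c^7)$, for which weak duality can be checked by a direct computation, and then invoke Propositions \ref{pro_3.1ff} and \ref{pro_3.2ff} to cascade the inequality down to all the remaining indices. So the core of the argument is the single display $\sup({\rm RLID}_c^6)\le\inf({\rm RLIP}_c)$ and $\sup({\rm RLID}_c^7)\le\inf({\rm RLIP}_c)$; once these are in hand, combining with $\sup({\rm RLID}_c^1)\le\sup({\rm RLID}_c^2),\sup({\rm RLID}_c^3)\le\sup({\rm RLID}_c^6)$ from Proposition \ref{pro_3.1ff} and $\sup({\rm RLID}_c^1)\le\sup({\rm RLID}_c^4),\sup({\rm RLID}_c^5)\le\sup({\rm RLID}_c^7)$ from Proposition \ref{pro_3.2ff} gives $\sup({\rm RLID}_c^i)\le\inf({\rm RLIP}_c)$ for every $i=1,\dots,7$ at once.

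For $({\rm RLID}_c^6)$: fix any feasible $x$ for $({\rm RLIP}_c)$, so $\langle v^1,x\rangle\le v^2$ for all $v\in\mathscr V$, and fix any feasible $\lambda\in\mathbb R_+^{(\mathscr V)}$ with $c=-\sum_{v\in\supp\lambda}\lambda_v v^1$. Then, using that each $\lambda_v\ge 0$,
\begin{equation*}
\langle c,x\rangle=-\sum_{v\in\supp\lambda}\lambda_v\langle v^1,x\rangle\ge -\sum_{v\in\supp\lambda}\lambda_v v^2,
\end{equation*}
which is exactly: objective value of $({\rm RLIP}_c)$ at $x$ $\ge$ objective value of $({\rm RLID}_c^6)$ at $\lambda$. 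Taking $\inf$ over feasible $x$ and $\sup$ over feasible $\lambda$ yields $\sup({\rm RLID}_c^6)\le\inf({\rm RLIP}_c)$. (If either problem is infeasible the inequality is trivial with the usual $\sup\emptyset=-\infty$, $\inf\emptyset=+\infty$ conventions, but here $({\rm RLIP}_c)$ is feasible by standing assumption.) For $({\rm RLID}_c^7)$: fix feasible $x$ and any $\lambda\ge 0$. Since $\langle v^1,x\rangle-v^2\le 0$ for all $v\in\mathscr V$, we get $\sup_{v\in\mathscr V}[\langle c+\lambda v^1,x\rangle-\lambda v^2]=\langle c,x\rangle+\lambda\sup_{v\in\mathscr V}[\langle v^1,x\rangle-v^2]\le\langle c,x\rangle$, hence $\inf_{x\in X}\sup_{v\in\mathscr V}[\langle c+\lambda v^1,x\rangle-\lambda v^2]\le\inf\{\langle c,x\rangle: x\text{ feasible}\}=\inf({\rm RLIP}_c)$; taking $\sup$ over $\lambda\ge 0$ gives $\sup({\rm RLID}_c^7)\le\inf({\rm RLIP}_c)$.

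I do not expect a genuine obstacle here; the only points requiring a little care are bookkeeping ones. First, one must be consistent about the $\sup$/$\inf$ over the empty feasible set so that the chain of inequalities does not degenerate — since $({\rm RLIP}_c)$ is assumed feasible, the right-hand side is a well-defined element of $\mathbb R\cup\{-\infty\}$ and the argument above is unaffected. Second, in the $({\rm RLID}_c^7)$ computation one should note $\sup_{v\in\mathscr V}[\langle v^1,x\rangle-v^2]\le 0$ whenever $x$ is feasible, so that multiplying by $\lambda\ge 0$ keeps the sign; the step $\inf_x\sup_v\le\inf_{x\text{ feasible}}\sup_v$ is just enlarging the set over which the inner quantity is minimized being restricted. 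Finally, the ``Consequently'' clause is the two-line application of Propositions \ref{pro_3.1ff} and \ref{pro_3.2ff} described above, and no further work is needed.
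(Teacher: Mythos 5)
Your proposal is correct and follows essentially the same route as the paper: a direct verification of weak duality for the pairs with $({\rm RLID}_c^6)$ and $({\rm RLID}_c^7)$ using a feasible point of $({\rm RLIP}_c)$ together with the nonnegativity of the multipliers, and then the chains in Propositions \ref{pro_3.1ff} and \ref{pro_3.2ff} to obtain the inequality for $i=1,\dots,7$. The only cosmetic difference is that you factor out $\lambda$ inside the supremum for $({\rm RLID}_c^7)$ while the paper bounds each term $\la c+\bar\lambda v^1,\bar x\ra-\bar\lambda v^2\le\la c,\bar x\ra$ directly, which is the same estimate.
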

\begin{proof} 
$\bullet$  {\sl Proof of  $\sup ({\rm RLID}_c^6)\le \inf ({\rm RLIP}_c)$:} 
Take  $\bar\lambda \in \mathbb{R}^{(\mathscr{V})}_+$,   and $\bar x\in X$ such that  $c=-\sum_{v \in \supp\lambda}\bar\lambda_{v} v^1$ and 
\begin{equation}\label{eq_3.12gg}
\la v^1,\bar x\ra - v^2\le 0,\quad v\in\mathscr{V}. 
\end{equation}
Then it is  easy to see that
$-\sum_{v\in \supp\bar\lambda} v^2\le - \sum_{v\in \supp\bar\lambda}\la v^1,\bar x\ra=\la c,\bar x\ra.$ 
So, by the definitions of  $ ({\rm RLID}_c^6)$ one has 
$\sup ({\rm RLID}_c^6) \leq  \la c,\bar x\ra $
for any $\bar x \in X$ satisfying \eqref{eq_3.12gg}, which  yields  $\sup ({\rm RLID}_c^6)\le \inf ({\rm RLIP}_c)$.

 \medskip

$\bullet$  {\sl Proof  of \  $\sup ({\rm RLID}_c^7)\le \inf ({\rm RLIP}_c)$:}  Take $\bar \lambda\ge 0$ and $\bar x\in X$ such that \eqref{eq_3.12gg} holds. For all $v\in \mathscr{V}$, as \eqref{eq_3.12gg} holds, one has $\la c+\bar\lambda v^1, \bar x\ra -\bar\lambda v^2\le \la c,\bar x\ra$. This yields that $\sup_{v\in \mathscr{V}}[\la c+\bar\lambda v^1, \bar x\ra -\bar\lambda v^2]\le \la c,\bar x\ra$ which, in turn, amounts for 
$$\inf_{x\in X}\sup_{v\in \mathscr{V}}[\la c+\bar\lambda v^1,  x\ra -\bar\lambda v^2]\le \la c,\bar x\ra.$$
The conclusion  follows.  
\end{proof}

\section{ Robust  Stable Strong   Duality  for (RLIP$_c$)}

In this section,   we will establish variants of   stable  robust strong  duality results for (RLIP$_c$). Some of them  cover the ones in 
 \cite{GJLL},  \cite{JL2} and the others are new.

Let us introduce variants of {\it robust moment cones} of (RLIP$_c$): 
\begin{align*}
\mathcal{N}_1&:=\cone \mathscr{V}+  \mathbb{R}_+ (0_{X^*},1), &&\mathcal{N}_2:=\bigcup_{ t\in T}\co\cone [\U_t\cup\{ (0_{X^*},1)\}],\\
\mathcal{N}_3&:=\bigcup_{ u\in\mathscr{U}}\co\cone [u(T)\cup\{ (0_{X^*},1)\}], \ \ \ \ &&\mathcal{N}_4:=\bigcup_{t\in T}\cone\oco[\U_t+ \mathbb{R}_+( 0_{X^*},1)],\\
\mathcal{N}_5&:=\bigcup_{u\in \mathscr{U}}\cone\oco[u(T)+\mathbb{R}_+( 0_{X^*},1)], &&
\mathcal{N}_6:=\co\cone \left[\mathscr{V}\cup \{(0_{X^*},1)\}  \right],   \\
\mathcal{N}_7&:=\cone \oco\left[\mathscr{V}+\mathbb{R}_+ (0_{X^*},1)\right], 
\end{align*}
where $u(T):=\{u_t: t\in T\}$ if  $u\in\mathscr{U}$.

Observe that  $\mathcal{N}_3$ is $M_{\ell f}$ in \cite{DGLM17-Optim},  and   
$ \mathcal{N}_3$ and  $\mathcal{N}_6 $ were introduced in  \cite{GJLL} and  known as   ``robust moment cone" and 
``characteristic cone", respectively.

\begin{theorem} [$1^{st}$ characterization of stable robust strong duality for (RLIP$_c$)]
\label{thm_3.2}
\noindent    For $ i \in \{ 1, 2, \ldots, 5\} $, consider the following statements:

$({\rm c}_i)$ $\mathcal{N}_i $ is a closed and convex subset of $X^*\times \mathbb{R}$,

$({\rm d}_i)$  The stable  robust strong  duality holds for the pair {$({\rm RLIP}_c)-({\rm RLID}_c^i)$}.   

\noindent
Then, one has   $[({\rm c}_i)\Leftrightarrow ({\rm d}_i)]$  for all $i\in\{1,2,\ldots, 5\}$.  
\end{theorem}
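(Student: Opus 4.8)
The strategy is to reduce each pair $({\rm RLIP}_c)-({\rm RLID}_c^i)$ to the abstract framework of Section 2.3 by interpreting the linear infinite constraint system as a robust conical system $G_u(x)\in-S$ with a suitable choice of the uncertainty set $\U$, the space $Z$, the cone $S$, and the mappings $G_u$; then apply Theorem \ref{thm_StrD}, whose conclusion $[(a)\Leftrightarrow(b)]$ is precisely $[({\rm c}_i)\Leftrightarrow({\rm d}_i)]$ once one verifies that the robust moment cone $\mathcal{M}_0$ of the chosen reformulation coincides with $\mathcal{N}_i$ and that the dual problem $({\rm RD}_c)$ of the reformulation coincides with $({\rm RLID}_c^i)$. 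Concretely: for $i=1$ take $Z=\R$, $S=\R_+$, $\U=\mathscr{V}$, and $G_v(x):=\la v^1,x\ra-v^2$ — then $\epi(\lambda G_v)^\ast=\lambda(v^1,v^2)+\R_+(0_{X^\ast},1)$, so $\mathcal{M}_0=\cone\mathscr{V}+\R_+(0_{X^\ast},1)=\mathcal{N}_1$, and the specialized $({\rm RD}_c)$ is exactly $({\rm RLID}_c^1)$. For $i=4$ take $Z=\R$, $S=\R_+$, $\U=T$, and $G_t:=\sigma_{\U_t}-$ (the corresponding supremum over the $r$-coordinates), i.e. realize the $t$-th robust constraint $\sup_{(x^\ast,r)\in\U_t}(\la x^\ast,x\ra-r)\le 0$ as a single sub-affine inequality; Corollary \ref{lem_FL3} already computes that the associated $\mathcal{M}_0$ equals $\bigcup_{t\in T}\co\cone[\U_t\cup\{(0_{X^\ast},1)\}]$ after closed-convexification, but for the \emph{un}closed moment cone one gets $\mathcal{N}_4$ via the $\oco$ appearing in $\epi(\sigma_{\U_t})^\ast$. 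For $i=3$ take $\U=\mathscr{U}$ and, for each $u\in\mathscr{U}$, let $G_u\colon X\to\R^{T}\cup\{+\infty\}$ (or rather $Z=\R^T$, $S=\R_+^T$) be the linear map $x\mapsto(\la u_t^1,x\ra-u_t^2)_{t\in T}$; then $S^+=\R_+^{(T)}$, $(\lambda G_u)^\ast$ for $\lambda\in\R_+^{(T)}$ gives $\epi(\lambda G_u)^\ast=\sum_{t\in\supp\lambda}\lambda_t(u_t^1,u_t^2)+\R_+(0_{X^\ast},1)$, so $\mathcal{M}_0=\bigcup_{u\in\mathscr{U}}\co\cone[u(T)\cup\{(0_{X^\ast},1)\}]=\mathcal{N}_3$ and $({\rm RD}_c)=({\rm RLID}_c^3)$. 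The cases $i=2$ and $i=5$ are the "transposed" versions: for $i=2$ take $\U=T$ but with $Z=\R^{\mathscr{U}}$, $S=\R_+^{\mathscr{U}}$ and $G_t(x):=(\la u_t^1,x\ra-u_t^2)_{u\in\mathscr{U}}$, giving $\mathcal{M}_0=\mathcal{N}_2$; for $i=5$ mirror the $i=4$ construction over $\mathscr{U}$ to obtain $\mathcal{N}_5$.

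The bookkeeping steps, in order, are: (1) fix $i$ and declare the reformulation data $(Z,S,\U,(G_u))$; (2) check that each $G_u$ is proper, $S$-convex and $S$-epi-closed (immediate, since the $G_u$ are continuous affine with closed convex epigraph in each coordinate), so that Proposition \ref{lem_FL} and Theorem \ref{thm_StrD} apply; (3) verify $\F$ (feasible set of $({\rm RP}_c)$ for this reformulation) equals the feasible set of $({\rm RLIP}_c)$, so $\inf({\rm RP}_c)=\inf({\rm RLIP}_c)$; (4) compute $\epi(\lambda G_u)^\ast$ for $\lambda\in S^+$ by a direct conjugate calculation and take the union over $(u,\lambda)\in\U\times S^+$ to identify $\mathcal{M}_0$ with $\mathcal{N}_i$ — here for the sub-affine reformulations ($i=2,4,5$) one uses that $\epi(\sigma_A-b)^\ast=\oco[A\times\{b\}]+\R_+(0_{X^\ast},1)$ when $A$ need not be $w^\ast$-closed, which is where the $\oco$ inside $\mathcal{N}_4,\mathcal{N}_5$ (and the $\cone\oco$ outer operation) originates; (5) compute the specialized dual $({\rm RD}_c)=\sup_{(u,\lambda)}\inf_x(\la c,x\ra+\lambda G_u(x))$ and match it termwise with $({\rm RLID}_c^i)$; (6) invoke Theorem \ref{thm_StrD} to conclude $(c_i)\Leftrightarrow(d_i)$, noting $r_0=\inf({\rm RLIP}_c)>-\infty$ is guaranteed because weak duality (Proposition \ref{weakduality1}) bounds it below by, say, $\sup({\rm RLID}_c^6)$ — actually one must be slightly careful: Theorem \ref{thm_StrD} requires $r_0>-\infty$; if $\inf({\rm RLIP}_c)=-\infty$ then all duals are $-\infty$ too by weak duality and stable strong duality trivially holds on both sides, while $\mathcal{N}_i$ closed-and-convex forces (via Proposition \ref{lem_FL}) a finite value, so the equivalence degenerates correctly — I would dispose of this edge case in one sentence.

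The main obstacle I anticipate is step (4)–(5) for the "sup-inf" duals $i=4,5,7$ and the corresponding $\mathcal{N}_4,\mathcal{N}_5$: there the natural reformulation replaces each $t$ (or each $u\in\mathscr{U}$) by a \emph{single} sub-affine constraint $\sigma_{\U_t}(x)\le 0$ rather than infinitely many linear ones, and one must be careful that the conjugate of $\sigma_{\U_t}$ involves $\oco$ of $\U_t$ (not $\U_t$ itself), which is exactly why $\mathcal{N}_4$ carries the inner $\oco[\U_t+\R_+(0_{X^\ast},1)]$ while $\mathcal{N}_2$ (coming from the conical reformulation with $Z=\R^{\mathscr{U}}$) carries only $\co\cone$; keeping these two kinds of reformulation straight, and verifying that the specialized $({\rm RD}_c)$ genuinely produces the inner "$\inf_x\sup$" structure of $({\rm RLID}_c^4)$ and $({\rm RLID}_c^5)$ (which it does because $\la c,x\ra+\lambda G_t(x)=\la c,x\ra+\lambda\sigma_{\U_t}(x)=\sup_{v\in\U_t}[\la c+\lambda v^1,x\ra-\lambda v^2]$), is the delicate part. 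Everything else is routine conjugate calculus and relabeling of index sets, already rehearsed in Corollaries \ref{lem_FL2} and \ref{lem_FL3}.
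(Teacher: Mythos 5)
Your proposal is correct and follows essentially the same route as the paper: for each $i$ you choose exactly the reformulations the authors use ($\U=\mathscr{V}$ with affine $G_v$ for $i=1$; $\U=T$, $Z=\mathbb{R}^{\mathscr{U}}$ and $\U=\mathscr{U}$, $Z=\mathbb{R}^{T}$ with linear $A_u,\omega_u$ for $i=2,3$; $\U=T$ resp.\ $\U=\mathscr{U}$ with the sup-type scalar $G$'s for $i=4,5$), identify $\mathcal{M}_0$ (or $\mathcal{M}_1$) with $\mathcal{N}_i$, match the specialized dual with $({\rm RLID}_c^i)$, and invoke Theorem \ref{thm_StrD} (the paper routes $i=1,2,3$ through its affine specialization, Corollary \ref{thm_lStrD}, which is the same thing). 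Your only slip is momentarily listing $i=2$ among the sub-affine reformulations, but you correct this yourself when you contrast $\mathcal{N}_2$ (conical reformulation, $\co\cone$ only) with $\mathcal{N}_4,\mathcal{N}_5$ (sub-affine, hence $\oco$), and your explicit treatment of the $r_0>-\infty$ edge case is if anything more careful than the paper's.
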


\begin{proof}  $\bullet$  $[({\rm c}_1)\Leftrightarrow ({\rm d}_1)]$
 Set  $Z=\mathbb{R}$, $S=\mathbb{R}_+$, $\U=\mathscr{V}$,  $A_v=v^1$ and $\omega_v=v^2$ for all  $v = (v^1, v^2) \in\mathscr{V}$. Then, $({\rm RLIP}_c)$ has the form of $({\rm RLP}_c)$ in \eqref{problem_RLP}. 
In such a setting,  the robust moment cone $\mathcal{M}_1$ in  \eqref{M_1} reduces to 
\begin{align*}
\mathcal{M}_1
&=\{(\lambda A_v,\la\lambda, \omega_v\ra): v\in \mathscr{V},\; \lambda\ge 0\}+  \mathbb{R}_+ (0_{X^*},1)\\
&=\{\lambda v:  v\in \mathscr{V},\; \lambda\ge 0\}+  \mathbb{R}_+ (0_{X^*},1)\\
&=\cone \mathscr{V}+  \mathbb{R}_+ (0_{X^*},1)=\mathcal{N}_1. 
\end{align*}
It is easy to see  that  the robust dual problem $({\rm RLD}_c)$ of  the  resulting  robust problem   $(\rm {RLP}_c)$ 
now turns be exactly    $({\rm RLID}_c^1)$,  and so, 
the equivalence  $[({\rm c}_1)\Leftrightarrow ({\rm d}_1)]$  follows  directly from Corollary \ref{thm_lStrD}.
\medskip

$\bullet$ $[({\rm c}_2)\Leftrightarrow ({\rm d}_2)]$   Set $Z=\mathbb{R}^{\mathscr{U}}$, $S=\mathbb{R}^{\mathscr{U}}_+$ (and consequently, $Z^*=\mathbb{R}^{(\mathscr{U})}$ and $S^+=\mathbb{R}^{(\mathscr{U})}_+$), $\U=T$,   $A_t= (u^1_t)_{u\in \mathscr{U}}$ and $\omega_t=   (u^2_t)_{u\in \mathscr{U}}$ for all $t\in T$.  Then  the problem   $({\rm RLIP}_c)$   possesses  the form 
 $({\rm RLP}_c)$.     
In   this setting, the set $\mathcal{M}_1$  in  \eqref{M_1}  becomes 
\begin{align*}
\mathcal{M}_1&=\{(\lambda A_t,\la\lambda, \omega_t\ra): t\in T,\;  \lambda\in  \mathbb{R}^{(\mathscr{U})}_+\}+  \mathbb{R}_+ (0_{X^*},1)\\
&=\left\{\left(\sum_{u\in\supp\lambda}\lambda_u u^1_t,\sum_{u\in\supp\lambda}\lambda_u u^1_t\right): t\in T,\;  \lambda\in  \mathbb{R}^{(\mathscr{U})}_+\right\}+  \mathbb{R}_+ (0_{X^*},1)\\
&=\left\{\sum_{u\in\supp\lambda}\lambda_uu_t: t\in T,\;  \lambda\in  \mathbb{R}^{(\mathscr{U})}_+\right\}+  \mathbb{R}_+ (0_{X^*},1)\\
&=\left[\bigcup_{t\in T}\left\{\sum_{u\in\supp\lambda}\lambda_uu_t:  \lambda\in  \mathbb{R}^{(\mathscr{U})}_+\right\}\right]+  \mathbb{R}_+ (0_{X^*},1)\\
&= \left[\bigcup_{t\in T} \co\cone \U_t\right]+ \mathbb{R}_+ (0_{X^*},1)\quad \textrm{(note that $\{u_t: u\in \mathscr{U}\}=\U_t$)} \\
&= \bigcup_{t\in T} \left[\co\cone \U_t+ \mathbb{R}_+ (0_{X^*},1)\right]= \bigcup_{t\in T} \co\cone \left[ \U_t\cup \{(0_{X^*},1)\}\right]=\mathcal{N}_2, 
\end{align*}
and the dual problem of $({\rm RLD}_c)$ (in the new format)    has the form  $({\rm RLID}_c^2)$. 
The equivalence $[({\rm c}_2)\Leftrightarrow ({\rm d}_2)]$ then follows from  Corollary \ref{thm_lStrD}.

$\bullet$ $[({\rm c}_3)\Leftrightarrow ({\rm d}_3)]$   We transform $({\rm RLIP}_c)$ to $({\rm RLP}_c)$ by   setting: $Z=\mathbb{R}^{T}$, $S=\mathbb{R}^{T}_+$ (hence, $Z^*=\mathbb{R}^{(T)}$ and $S^+=\mathbb{R}^{(T)}_+$), $\U=\mathscr{U}$,  $A_u=(u^1_t)_{t\in T}$ and $\omega_u=(u^2_t)_{t\in T}$ for all $u \in \mathscr{U}$. Then, one has
\begin{align*}
\mathcal{M}_1&=\{(\lambda A_u,\la\lambda, \omega_u\ra): u\in \mathscr{U},\;  \lambda\in  \mathbb{R}^{(T)}_+\}+  \mathbb{R}_+ (0_{X^*},1)\\
&=\left\{\sum_{t\in\supp\lambda}\lambda_t u_t: u\in \mathscr{U},\;  \lambda\in  \mathbb{R}^{(T)}_+\right\}+  \mathbb{R}_+ (0_{X^*},1)\\
&= \left[\bigcup_{u \in \mathscr{U}} \co\cone u(T)\right]+  \mathbb{R}_+ (0_{X^*},1)\quad\textrm{(note that $\left\{u_t: t\in T \right\}=u(T)$)}\\
&= \bigcup_{u\in \mathscr{U}} \co\cone \left[ u(T)\cup \{(0_{X^*},1)\}\right]=\mathcal{N}_3.
\end{align*}
and dual problem $({\rm RLD}_c)$ of the resulting problem  $({\rm RLP}_c)$   is exactly  $({\rm RLID}_3)$. The desired equivalence follows from Corollary \ref{thm_lStrD}.
\medskip

$\bullet$ $[({\rm c}_4)\Leftrightarrow ({\rm d}_4)]$ We now consider another  way of  transforming  $({\rm RLIP}_c)$ to the form  $({\rm RP}_c)$ by letting  $Z=\mathbb{R}$, $S=\mathbb{R}_+$, $\U=T$, and $G_t\colon X\to \overline{\mathbb{R}}$ such that $G_t(x)= \sup_{v\in\U_t} [\la v^1, x\ra-v^2]$ for all $t\in T$. Then, one has  (see \eqref{M0}) 
\begin{align*}
\mathcal{M}_0&=\bigcup_{t\in T,\; \lambda\ge 0} \epi (\lambda G_t)^\ast =\bigcup_{t\in T,\; \lambda\ge 0} \lambda\epi (G_t)^\ast\\
&=\bigcup_{t\in T} \cone\epi (G_t)^\ast =\bigcup_{t\in T} \cone\epi\left[\sup_{ v\in\U_t} (\la v^1,.\ra-v^2)\right]^*\\
&=\bigcup_{t\in T} \cone\oco \bigcup_{ v\in\U_t}\epi\left( \la v^1,.\ra-v^2\right)^*
\end{align*}
(the last equalities follows from \cite[Lemma 2.2]{GNg08}). On the other hand, for each $t\in T$ and $v\in \U_t$, by simple calculation one gets
$\epi \left( \la v^1,.\ra-v^2\right)^*=v+\mathbb{R}_+ (0_{X^*},1).$
So, 
\begin{align*}
\mathcal{M}_0 =\bigcup_{t\in T} \cone\oco \bigcup_{ v\in\U_t}[v+\mathbb{R}_+ (0_{X^*},1)] 
=\bigcup_{t\in T} \cone\oco [\U_t+\mathbb{R}_+ (0_{X^*},1)] =\mathcal{N}_4.
\end{align*}
It is easy to see that the dual problem $({\rm RD}_c)$ of the resulting problem $({\rm RP}_c)$  is nothing else but $({\rm RLID}_c^4)$, and   
the equivalence $[({\rm c}_4)\Leftrightarrow ({\rm d}_4)]$ is a consequence of  Theorem \ref{thm_StrD}.

\medskip

$\bullet$ $[({\rm c}_5)\Leftrightarrow ({\rm d}_5)]$  Again, we transform $({\rm RLIP}_c)$ to $({\rm RP}_c)$ but by another setting:   $Z=\mathbb{R}$, $S=\mathbb{R}_+$,  $\U=\mathscr{U}$, and $G_u\colon X\to \overline{\mathbb{R}}$ such that $G_u(x)= \sup_{t\in T}[\la u^1_t, x\ra-u^2_t)]$  for all $u\in \mathscr{U}$. 
 Then, one has  (see \eqref{M0})   
\begin{align*}
\mathcal{M}_0&=\bigcup_{u\in \mathscr{U},\; \lambda\ge 0} \epi (\lambda G_u)^\ast 
=\bigcup_{u\in \mathscr{U}} \cone\epi (G_u)^\ast\\
&=\bigcup_{u\in \mathscr{U}} \cone\epi\left[\sup_{ t\in T} (\la u^1_t,.\ra-u^2_t)\right]^*=\bigcup_{u\in \mathscr{U}} \cone\oco \bigcup_{ t\in T}\epi (\la u^1_t,.\ra-u^2_t)^*\\
&=\bigcup_{u\in \mathscr{U}} \cone\oco \bigcup_{ t\in T}[u_t+\mathbb{R}_+ (0_{X^*},1)] =\bigcup_{u\in \mathscr{U}} \cone\oco [u(T)+\mathbb{R}_+ (0_{X^*},1)]=\mathcal{N}_5, 
\end{align*}
and the robust dual problem  $({\rm RD}_c)$  of the new  problem $({\rm RP}_c)$ is exactly  $({\rm RLID}_c^5)$. The desired equivalence again  follows from Theorem \ref{thm_StrD}.
\end{proof}

\begin{remark}
Theorem \ref{thm_3.2} with  $i=3$ is \cite[Theorem 2]{GJLL} while 
  $i=6$ ($i = 3$, resp.) is similar to \cite[Proposition 5.2(ii)]{DGLV17-Robust} with   $i=C$ ($i = O$, resp.). 
\end{remark}

\begin{theorem} [$2^{nd}$  characterization for stable robust  strong duality for (RLIP$_c$)]
\label{thm_3.2bis} For $ i = 6, 7$, 
consider the next statements: 

$({\rm c}_i)$ $\mathcal{N}_i $ is a closed subset of $X^*\times \mathbb{R}$,

$({\rm d}_i)$  The stable  robust strong  duality holds for the pair {$({\rm RLIP}_c)-({\rm RLID}_c^i)$}.   

\noindent
Then  $[({\rm c}_i) \Leftrightarrow ({\rm d}_i)]$ for  $ i = 6, 7$.   
\noindent
\end{theorem}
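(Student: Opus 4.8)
The plan is to mirror the structure of the proof of Theorem \ref{thm_3.2}, reducing the cases $i=6$ and $i=7$ to the general results already established for $({\rm RP}_c)$ and $({\rm RLP}_c)$. For $i=6$, I would set $Z=\mathbb{R}$, $S=\mathbb{R}_+$, $\U=\mathscr{V}$, $A_v=v^1$ and $\omega_v=v^2$ for $v=(v^1,v^2)\in\mathscr{V}$, exactly as in the case $[({\rm c}_1)\Leftrightarrow({\rm d}_1)]$; the difference is that now I want to invoke a \emph{non-stable} form of the duality, fixing the actual $c$ coming from $({\rm RLIP}_c)$. The key computation is that $\oco\mathcal{M}_1 = \oco\mathcal{N}_1 = \mathcal{N}_7$ (after noting $\cone\mathscr{V}+\mathbb{R}_+(0_{X^*},1)$ has closed convex hull $\cone\oco[\mathscr{V}+\mathbb{R}_+(0_{X^*},1)]$), but for $i=6$ I need instead to recognize that $\co\cone[\mathscr{V}\cup\{(0_{X^*},1)\}] = \co(\cone\mathscr{V}+\mathbb{R}_+(0_{X^*},1))$, so $\mathcal{N}_6 = \co\mathcal{N}_1$, which is already convex; hence only closedness of $\mathcal{N}_6$ is at issue, explaining why the statement asks for "closed" rather than "closed and convex".

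The cleanest route is to apply Proposition \ref{lem_FL} (or rather Corollary \ref{lem_FL3} with the sub-affine reading, or Corollary \ref{lem_FL2}) directly. For $[({\rm c}_6)\Rightarrow({\rm d}_6)]$: assume $\mathcal{N}_6$ is closed; since it is automatically convex, $\oco\mathcal{N}_6=\mathcal{N}_6$, and I observe $\oco\mathcal{M}_1 = \oco\mathcal{N}_6$ where $\mathcal{M}_1$ is the moment cone for the $\U=\mathscr{V}$ setting. Writing $r_0:=\inf({\rm RLIP}_c)$ (finite by feasibility and the standing assumption that we are in a case where this is $>-\infty$; if $r_0=-\infty$ both sides of the duality equality are $-\infty$ and there is nothing to prove, so I would dispose of that first), the feasibility implication gives $(c,r_0)\in -\oco\mathcal{M}_1 = -\mathcal{N}_6$ via Proposition \ref{lem_FL}. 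Then I unwind: a point of $\mathcal{N}_6=\co\cone[\mathscr{V}\cup\{(0_{X^*},1)\}]$ containing $(-c,-r_0)$ decomposes as $\sum_{v\in\supp\lambda}\lambda_v v + \mu(0_{X^*},1)$ with $\lambda\in\mathbb{R}_+^{(\mathscr{V})}$, $\mu\ge0$, so $c=-\sum\lambda_v v^1$ and $-\sum\lambda_v v^2 = r_0+\mu\ge r_0$; combined with weak duality (Proposition \ref{pro_3.3gg}) this forces $\sup({\rm RLID}_c^6)=r_0$ and the supremum is attained, i.e. $\max$. For $[({\rm d}_6)\Rightarrow({\rm c}_6)]$, I run the argument of Theorem \ref{thm_StrD}'s $(b)\Rightarrow(a)$ direction: take $(c,r)\in-\oco\mathcal{N}_6$, deduce via Proposition \ref{lem_FL} the feasibility implication with $r_0=r$, use stable robust strong duality to get attainment at some finite value $\ge r$, and conclude $(c,r)\in-\mathcal{N}_6$, so $\oco\mathcal{N}_6\subset\mathcal{N}_6$ and $\mathcal{N}_6$ is closed.

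For $i=7$, the setting is $Z=\mathbb{R}$, $S=\mathbb{R}_+$, $\U=\mathscr{V}$, and $G_v(x)=\la v^1,x\ra-v^2$ treated not as a family of affine maps but collapsed: actually I would instead take a single "sup" mapping as in the $[({\rm c}_4)\Leftrightarrow({\rm d}_4)]$ and $[({\rm c}_5)\Leftrightarrow({\rm d}_5)]$ cases, namely $\U$ a singleton and $G(x)=\sup_{v\in\mathscr{V}}[\la v^1,x\ra-v^2]$, whence $\mathcal{M}_0=\cone\oco[\mathscr{V}+\mathbb{R}_+(0_{X^*},1)]=\mathcal{N}_7$, which again is already convex (a cone's closed convex hull — here the $\oco$ is built in and $\cone$ of a convex set is convex), and the dual $({\rm RD}_c)$ becomes $({\rm RLID}_c^7)$. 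Then apply Theorem \ref{thm_StrD} with $\U$ a singleton: statement (a) there reduces to "$\mathcal{N}_7$ is closed" (convexity being free), and statement (b) is precisely $({\rm d}_7)$. So $[({\rm c}_7)\Leftrightarrow({\rm d}_7)]$ is an immediate corollary of Theorem \ref{thm_StrD}, and likewise $[({\rm c}_6)\Leftrightarrow({\rm d}_6)]$ can be packaged as Corollary \ref{thm_lStrD} applied with $\U=\mathscr{V}$ once one notes $\mathcal{M}_1$ and $\mathcal{N}_6$ have the same closed (convex) hull and $\mathcal{N}_6$ is convex. The main obstacle — really the only non-bookkeeping point — is verifying carefully that in each of these two settings the relevant moment cone ($\mathcal{M}_1$, resp. $\mathcal{M}_0$) is already convex, so that the hypothesis can be weakened from "closed and convex" to "closed"; this rests on the elementary facts that $\co\cone[A\cup\{p\}]=\co(\cone A+\mathbb{R}_+p)$ and that $\cone$ of a $w^*$-closed convex hull is convex, together with the identification of the dual problems, which I would check by the same direct conjugate computations used in the proof of Theorem \ref{thm_3.2}.
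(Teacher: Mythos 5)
Your argument is correct. For $i=7$ it is essentially the paper's own proof: take $\U$ a singleton, $G=\sup_{v\in\mathscr{V}}[\la v^1,\cdot\ra-v^2]$, compute $\mathcal{M}_0=\mathcal{N}_7$, identify $({\rm RD}_c)$ with $({\rm RLID}_c^7)$, and apply Theorem \ref{thm_StrD}, convexity being free. For $i=6$ you take a genuinely different route. The paper transforms $({\rm RLIP}_c)$ into $({\rm RLP}_c)$ with $Z=\mathbb{R}^{\mathscr{V}}$, $S=\mathbb{R}^{\mathscr{V}}_+$, $\U$ a singleton, $A=(v^1)_{v\in \mathscr{V}}$, $\omega=(v^2)_{v\in \mathscr{V}}$; then $\mathcal{M}_1=\mathcal{N}_6$, the dual $({\rm RLD}_c)$ is literally $({\rm RLID}_c^6)$, and Corollary \ref{thm_lStrD} yields the equivalence, convexity again being automatic since $\U$ is a singleton. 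You instead stay in the scalar setting of the case $i=1$ ($\U=\mathscr{V}$, $G_v=\la v^1,\cdot\ra-v^2$), use $\mathcal{N}_6=\co\mathcal{N}_1$, hence $\oco\mathcal{N}_1=\cl\mathcal{N}_6$, and rerun both directions of Theorem \ref{thm_StrD} by hand: Proposition \ref{lem_FL} places $(c,r_0)$ in $-\cl\mathcal{N}_6=-\mathcal{N}_6$, the explicit finite-combination decomposition of a point of $\mathcal{N}_6$ produces a feasible multiplier for $({\rm RLID}_c^6)$ with value at least $r_0$, and Proposition \ref{pro_3.3gg} closes the gap; the converse mimics the (b)$\Rightarrow$(a) step of Theorem \ref{thm_StrD}. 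This alternative is sound: it avoids the auxiliary space $\mathbb{R}^{\mathscr{V}}$ and makes transparent why only closedness is at issue ($\mathcal{N}_6$ and $\mathcal{N}_7$ are convex by construction), at the cost of redoing the Farkas/weak-duality bookkeeping that the paper obtains at once from Corollary \ref{thm_lStrD}. One small caveat: your closing suggestion that the case $i=6$ ``can be packaged as Corollary \ref{thm_lStrD} applied with $\U=\mathscr{V}$'' is not literally accurate, since in that setting the corollary pairs $({\rm RLIP}_c)$ with $({\rm RLID}_c^1)$ and tests closedness and convexity of $\mathcal{N}_1$, not of $\mathcal{N}_6$; but your detailed direct argument does not rely on this packaging, so the proof stands.
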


\begin{proof}

$\bullet$ $[({\rm c}_6)\Leftrightarrow ({\rm d}_6)]$
The robust problem  $({\rm RLIP}_c)$ turns to be  $({\rm RLP}_c)$ if we set   $Z=\mathbb{R}^{\mathscr{V}}$, $S=\mathbb{R}^{\mathscr{V}}_+$,  $\U$ to be  a singleton, $A=(v^1)_{v\in \mathscr{V}}$ and $\omega=(v^2)_{v\in \mathscr{V}}$. In such a setting,  one gets 
\begin{align*}
\mathcal{M}_1&=\{(\lambda A,\la\lambda, \omega\ra):  \lambda\in  \mathbb{R}^{(\mathscr{V})}_+\}+  \mathbb{R}_+ (0_{X^*},1)\\
&=\left\{\sum_{v\in\supp\lambda}\lambda_v v:  \lambda\in  \mathbb{R}^{(\mathscr{V})}_+\right\}+  \mathbb{R}_+ (0_{X^*},1)\\
&= \co\cone \mathscr{V}+  \mathbb{R}_+ (0_{X^*},1)=\co\cone \left[\mathscr{V}\cup \{(0_{X^*},1)\}  \right]=\mathcal{N}_6, 
\end{align*}
while the robust dual problem of the new problem   $({\rm RLP}_c)$ 
(i.e., $({\rm RLD}_c)$)  is  non other than $({\rm RLID}_c^6)$. 
The equivalence $[({\rm c}_6)\Leftrightarrow ({\rm d}_6)]$ now follows from Corollary \ref{thm_lStrD} and the fact that the robust  moment cone is always convex whenever $\U$ is a singleton (see Proposition \ref{prop_conclo} and Remark \ref{rem_2.1}).

$\bullet$  $[({\rm c}_7)\Leftrightarrow ({\rm d}_7)]$ Set  $Z=\mathbb{R}$, $S=\mathbb{R}$, $\U$ to be  a singleton, and $G= \sup_{v\in \mathscr{V}}(\la v^1, .\ra-v^2)$. The problem  $({\rm RLIP}_c)$ now becomes   $({\rm RP}_c)$. On the other hand, one has (see \eqref{M0}) 
\begin{align*}
\mathcal{M}_0&=\bigcup_{\lambda\ge 0} \epi (\lambda G)^\ast =\cone \epi (\lambda G)^\ast\\
&=\cone\epi\left[\sup_{v\in \mathscr{V}}(\la v^1, .\ra-v^2)\right]^* = \cone\oco \bigcup_{v\in \mathscr{V}}\epi (\la v^1,.\ra-v^2)^*\\
&=\cone\oco \bigcup_{v\in \mathscr{V}}[v+\mathbb{R}_+ (0_{X^*},1)] =\cone \oco\left[\mathscr{V}+\mathbb{R}_+ (0_{X^*},1)\right]=\mathcal{N}_7, 
\end{align*}
while the  dual problem of $({\rm RD}_c)$ of the new problem  $({\rm RP}_c)$   is  $({\rm RLID}_c^7)$. The equivalence $[({\rm c}_7)\Leftrightarrow ({\rm d}_7)]$ is a consequence of  Theorem \ref{thm_StrD}, Proposition \ref{prop_conclo} (see also Remark \ref{rem_2.1}).
\end{proof}

\begin{remark} \label{rem41a}There may have sone  more  ways of transforming (RLIP$_c$) to the form of  (RP$_c$)  which give rise to some more robust dual problems for (RLIP$_c$),  for instance,

$(\alpha)$ Set  $Z=\mathbb{R}^T$, $S=\mathbb{R}_+^T$, $\U$ to be  a singleton, and $G= \left(\sup_{ v\in\U_t} [\la v^1, .\ra-v^2]\right)_{t\in T}$. Then (RLIP$_c$) reduces to the form of  (RP$_c$)  with no uncertainty as now $\U$ is a singleton. In this setting,   the moment  
cone $\mathcal{M}_0$ becomes 
\begin{align}\label{eq_7.1ee}
\mathcal{M}_0=\bigcup_{\lambda\in \mathbb{R}_+^{(T)}}\epi \left[\sum_{t\in T} \lambda_t \sup_{ v\in\U_t} (\la v^1, .\ra-v^2) \right]^*=:\mathcal{N}_8, 
\end{align}
and the robust dual problems now collapses to 

\begin{align*}
({\rm RLID}_c^8)\quad &\sup_{\lambda\in\mathbb{R}_+^{(T)}} \inf_{x\in X}\left[\la c,x\ra +\sum_{t\in \supp\lambda}\lambda_t \sup_{v\in\U_t}\left(\la v^1,x\ra -v^2\right)\right].
\end{align*}  

$(\beta)$  Set $Z=\mathbb{R}^\mathscr{U}$, $S=\mathbb{R}^\mathscr{U}_+$, $\U$ to be  a singleton, and  $G= (\sup_{t\in T}[\la u^1_t, .\ra-u^2_t)])_{u\in \mathscr{U}}$.   Then, the problem (RLIP$_c$) turns to be of the model   (RP$_c$),  and  one has
\begin{align*}
\mathcal{M}_0
&=\co\cone \bigcup_{u \in \mathscr{U}}  \oco\left[u(T)+\mathbb{R}_+(0_{X^*},1)\right]=:\mathcal{N}_9.   
\end{align*}
The corresponding dual problem is 
\begin{align*}
({\rm RLID}_c^9)\quad &\sup_{\lambda \in\mathbb{R}_+^{(\mathscr{U})}} \inf_{x\in X}\left[\la c,x\ra +\sum_{u\in \supp\lambda}\lambda_u \sup_{t\in T}\left(\la u^1_t,x\ra -u_t^2\right)\right].\notag
\end{align*}

For the mentioned cases, we  get  also the relation between the values of these two dual problems: 
$$
            \sup({\rm RLID}_c^6)\le \sup({\rm RLID}_c^8)\ \ \textrm{ and  } \ \ \sup({\rm RLID}_c^6)\le \sup({\rm RLID}_c^9), 
   $$  
   and  weak duality hold as well: 
   $$\begin{array}{c}\sup ({\rm RLID}_c^8)\\ \sup ({\rm RLID}_c^9) \\ 
\end{array}
\le \inf ({\rm RLIP}_c).$$        
Moreover, under some suitable conditions, robust strong duality holds, similar to the ones in \cite[Proposition 5.2(ii)]{DGLV17-Robust}.     
\end{remark}

\begin{remark} \label {rem_4.3new} 
From Propositions \ref{pro_3.1ff}-\ref{pro_3.3gg} and Remark \ref{rem41a},      we get an overview of the relationship between the values of robust dual problems and weak duality of each pair of primal-dual problems which can be described  as in the next figure:  

\begin{center}
\begin{tikzpicture}[x=1.5cm,y=1cm]
\draw[->] (-3,0) -- (-1.8,1.5); 
	\draw[->] (-0.3,1.5) -- (0.2,1.1); 
\draw[->] (-3,0) -- (-1.8,0.5); 
	\draw[->] (-0.3,0.5) -- (0.2,0.9); 
		\draw[->] (1.7,1) -- (2.2,1.5); 
			\draw[->] (3.7,1.5) -- (4.7,0.2); 
		\draw[->] (1.7,1) -- (2.2,0.5); 
			\draw[->] (3.7,0.5) -- (4.7,0); 
\draw[->] (-3,0) -- (-1.8,-0.5); 
	\draw[->] (-0.3,-0.5) -- (0.2,-0.9); 
\draw[->] (-3,0) -- (-1.8,-1.5); 
	\draw[->] (-0.3,-1.5) -- (0.2,-1.1); 
		\draw[-] (1,-1) -- (3.7,-1); \draw[->] (3.7,-1) -- (4.7,-0.2); 
\draw (-3,0) node [fill=white] {$\sup ({\rm RLID}_c^1)$};
\draw (-1,1.5) node [fill=white] {$\sup({\rm RLID}_c^2)$};
\draw (-1,0.5) node [fill=white] {$\sup({\rm RLID}_c^3)$};
\draw (-1,-0.5) node [fill=white] {$\sup({\rm RLID}_c^4)$};
\draw (-1,-1.5) node [fill=white] {$\sup({\rm RLID}_c^5)$};
\draw (1,1) node [fill=white] {$\sup({\rm RLID}_c^6)$};
\draw (1,-1) node [fill=white] {$\sup({\rm RLID}_c^7)$};
\draw (3,1.5) node [fill=white] {$\sup({\rm RLID}_c^8)$};
\draw (3,0.5) node [fill=white] {$\sup({\rm RLID}_c^9)$};
\draw (5.5,0) node [fill=white] {$\inf({\rm RLIP}_c)$};
\end{tikzpicture}
\end{center}
\noindent where by $a\longrightarrow b$  we mean  $a\le b$.
\end{remark}


As we have seen from  the previous theorems and from  the previous section, the closedness and convexity of robust moment cones play crucial roles in closing the dual gaps for the primal-dual pairs of robust  problems.  In the left of this section,   we will give some sufficient conditions for  the mentioned  properties of these  cones whose proofs are rather long and will be  put in  the last section: Appendices.

\begin{proposition}[Convexity of moment cones] \label{convexity-N}  The next assertions hold: 
$\phantom{x}$
\begin{itemize}[nosep]
\item[$\rm(i)$] If $\mathscr{V}$ is convex  then $\mathcal{N}_1$ is convex, 


\item[$\rm(ii)$] If $\{x^\ast\in X^*: (x^\ast,r)\in \U_t\}$ is convex for all $t\in T$ then $\mathcal{N}_3$ is convex, 

\item[$\rm(iii)$] Assume that $T$ is a  convex subset of some vector space, and that, for all $t\in T$, $\U_t=\U^1_t\times\U^2_t$ with $\U^1_t\subset X^*$ and $\U^2_t\subset \mathbb{R}$. Assume further that, for each $t\in T$ and $x\in X$, the function $t\mapsto \sup_{x^\ast\in \U^1_t} \la x^\ast, x\ra$ is affine and the function $t\mapsto \inf \U^2_t$ is convex. Then, $\mathcal{N}_4$ is convex, 

\item[$\rm(iv)$] The sets $\mathcal{N}_6$,   $\mathcal{N}_7$   are  convex\footnote{$\mathcal{N}_8$,  $\mathcal{N}_9$ are also convex.}. 
\end{itemize}
\end{proposition}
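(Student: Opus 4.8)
\textbf{Proof plan for Proposition \ref{convexity-N}.}

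The plan is to verify each item by a direct argument based on the structure of the relevant cone $\mathcal{N}_i$. For (i), I would argue that $\mathcal{N}_1 = \cone\mathscr{V} + \mathbb{R}_+(0_{X^*},1) = \cone\big(\mathscr{V} \cup \{(0_{X^*},1)\}\big)$ when $\mathscr{V}$ is convex: indeed, the conical hull of a convex set is convex, and adding the ray $\mathbb{R}_+(0_{X^*},1)$ amounts to taking $\co\cone\big(\mathscr{V}\cup\{(0_{X^*},1)\}\big)$, but since $\mathscr{V}$ is already convex the convex hull operation inside is superfluous and one checks directly that sums of elements of $\cone\mathscr{V} + \mathbb{R}_+(0_{X^*},1)$ stay in the set (a convex combination $\mu(\alpha v + s(0,1)) + (1-\mu)(\alpha' v' + s'(0,1))$ with $v,v'\in\mathscr{V}$ can be rewritten, using convexity of $\mathscr{V}$ to combine the $\mathscr{V}$-parts, as an element of the same form). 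For (ii), the same idea applies to $\mathcal{N}_3 = \bigcup_{u\in\mathscr{U}}\co\cone\big[u(T)\cup\{(0_{X^*},1)\}\big]$: one must show that given two points, one coming from $u$ and one from $u'$ in $\mathscr{U}$, any convex combination lies in $\co\cone\big[u''(T)\cup\{(0_{X^*},1)\}\big]$ for a suitable $u''\in\mathscr{U}$. The construction of $u''$ should interpolate componentwise between $u$ and $u'$; here the hypothesis that the $X^*$-section $\{x^*: (x^*,r)\in\U_t\}$ is convex for each $t$ is exactly what is needed to guarantee $u''_t\in\U_t$ after combining the first coordinates, while the second coordinates $r$ can be absorbed into the free ray $\mathbb{R}_+(0_{X^*},1)$ (which only constrains them from above), so no convexity on the $r$-part is required.

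For (iv), the sets $\mathcal{N}_6 = \co\cone[\mathscr{V}\cup\{(0_{X^*},1)\}]$ and $\mathcal{N}_7 = \cone\oco[\mathscr{V}+\mathbb{R}_+(0_{X^*},1)]$ are manifestly convex since $\co$ (resp.\ $\oco$) is applied at the outermost stage and $\cone$ of a convex set is convex (and $\cone\oco$ of a set is a convex cone). Likewise $\mathcal{N}_8 = \mathcal{M}_0$ in \eqref{eq_7.1ee} is a union over $\lambda\in\mathbb{R}_+^{(T)}$ of epigraphs of the conjugates of convex functions, but it equals $\epi$ of the conjugate of $\inf_{\lambda}\big[\sum_t\lambda_t\sup_{v\in\U_t}(\la v^1,\cdot\ra - v^2)\big]$-type object arranged as in Theorem \ref{thm_StrD}'s setting $(\alpha)$ where $\U$ is a singleton, hence by Remark \ref{rem_2.1} it is automatically convex; similarly $\mathcal{N}_9$ is convex because in setting $(\beta)$ the uncertainty set is a singleton and Proposition \ref{prop_conclo}(i)/Remark \ref{rem_2.1} applies. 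These are essentially one-line observations once the right formula is written down.

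The substantive case is (iii), concerning $\mathcal{N}_4 = \bigcup_{t\in T}\cone\oco[\U_t + \mathbb{R}_+(0_{X^*},1)]$ under the product structure $\U_t = \U^1_t\times\U^2_t$. I would first observe that $\oco[\U_t + \mathbb{R}_+(0_{X^*},1)] = \oco[(\co\U^1_t)\times(-\infty,\,\sup\U^2_t]]$ in the relevant sense, so that an element of $\cone\oco[\U_t+\mathbb{R}_+(0_{X^*},1)]$ has the form $\lambda(x^*, r)$ with $\sigma_{\U^1_t}$ controlling $x^*$ and $r \le \inf\U^2_t$ reversed appropriately — more precisely, it is governed by the sublinear function $x\mapsto\sigma_{\U^1_t}(x)$ and the scalar $\inf\U^2_t$. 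Then, to take a convex combination of $\lambda(x^*_1,r_1)$ from index $t_1$ and $\lambda'(x^*_2,r_2)$ from $t_2$, I would use the affine dependence $t\mapsto\sup_{x^*\in\U^1_t}\la x^*,x\ra$ and the convexity of $t\mapsto\inf\U^2_t$ to produce an intermediate index $\bar t = \mu t_1 + (1-\mu)t_2 \in T$ whose data dominate the convex combination, so the combined point lands in $\cone\oco[\U_{\bar t} + \mathbb{R}_+(0_{X^*},1)]$. The main obstacle I anticipate is bookkeeping the passage between the ``support-function'' description of $\oco[\U_t+\mathbb{R}_+(0_{X^*},1)]$ and the raw set description, together with correctly tracking the scaling factors $\lambda,\lambda'$ when merging two conical pieces indexed by different $t$'s — this is where the affinity (not mere convexity) of $t\mapsto\sigma_{\U^1_t}(x)$ is essential, since both inequalities $\le$ and $\ge$ are needed to keep the combined element inside a single $\cone\oco[\U_{\bar t}+\cdots]$ rather than only in the closed convex hull of the union.
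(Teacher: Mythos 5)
Your items (i) and (iv) are fine, and (iii) is workable, but note that your route is genuinely different from the paper's: the paper proves all four assertions by exhibiting uniform $S^+$-concavity of the collections $(u\mapsto G_u(x))_{x\in X}$ arising from the transformations of Theorems \ref{thm_3.2} and \ref{thm_3.2bis}, and then invoking Proposition \ref{prop_conclo}(i) (with Remark \ref{rem_2.1} covering the singleton-uncertainty cases $\mathcal{N}_6,\mathcal{N}_7,\mathcal{N}_8,\mathcal{N}_9$), whereas you argue directly on the cones. For (i) your direct verification is correct (though the identity $\cone \mathscr{V}+\mathbb{R}_+(0_{X^*},1)=\cone(\mathscr{V}\cup\{(0_{X^*},1)\})$ you state in passing is not: the right-hand side is the union $\cone\mathscr{V}\cup\mathbb{R}_+(0_{X^*},1)$, not the sum; it is your explicit recombination of the $\mathscr{V}$-parts that does the work). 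For (iii), the mechanism is one-sided only: with $G_t=\sigma_{\U^1_t}-\inf\U^2_t$, concavity of $t\mapsto G_t(x)$ gives $\theta\lambda_1G_{t_1}+(1-\theta)\lambda_2G_{t_2}\le\bar\lambda G_{\bar t}$ for $\bar\lambda=\theta\lambda_1+(1-\theta)\lambda_2$ and a suitably weighted $\bar t\in T$, and then $\epi(\theta\lambda_1G_{t_1})^*+\epi((1-\theta)\lambda_2G_{t_2})^*\subset\epi(\theta\lambda_1G_{t_1}+(1-\theta)\lambda_2G_{t_2})^*\subset\epi(\bar\lambda G_{\bar t})^*$; so your closing remark that both inequalities, hence affinity of $t\mapsto\sigma_{\U^1_t}(x)$, are essential is not what drives the argument — affinity is simply what the hypothesis supplies (concavity in $t$ would do).

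The genuine gap is in (ii). A point of $\co\cone[u(T)\cup\{(0_{X^*},1)\}]$ has the form $\sum_{t\in\supp\lambda}\lambda_t(u^1_t,u^2_t)+s(0_{X^*},1)$ with $s\ge0$. Combining it with a point built from $w\in\mathscr{U}$ and weights $\mu$, the first coordinates force the new generator $\bar u^1_t=(\theta\lambda_tu^1_t+(1-\theta)\mu_tw^1_t)/\bar\lambda_t$, $\bar\lambda_t=\theta\lambda_t+(1-\theta)\mu_t$, which the sectional convexity hypothesis places in the $X^*$-section of $\U_t$; but you must then exhibit $\bar u^2_t$ with $(\bar u^1_t,\bar u^2_t)\in\U_t$ \emph{and} $\sum_t\bar\lambda_t\bar u^2_t\le\theta\sum_t\lambda_tu^2_t+(1-\theta)\sum_t\mu_tw^2_t$, because the ray $\mathbb{R}_+(0_{X^*},1)$ can only \emph{add} a nonnegative amount to the last coordinate: it absorbs an undershoot, never an overshoot. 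Your claim that the second coordinates ``can be absorbed into the free ray, so no convexity on the $r$-part is required'' is therefore backwards — if the only $r$'s that $\U_t$ pairs with $\bar u^1_t$ are large, the combined point need not lie in any $\co\cone[\bar u(T)\cup\{(0_{X^*},1)\}]$, and nothing in your sketch rules this out. This is exactly the delicate point the paper confronts: its proof takes $\bar u^2_t=\min\{u^2_t,w^2_t\}$, so that $-\lambda_tu^2_t-\mu_tw^2_t\le-\bar\lambda_t\bar u^2_t$, and then asserts membership $(\bar u^1_t,\bar u^2_t)\in\U_t$ on the strength of the hypothesis; any complete argument for (ii) must secure such a small enough admissible second coordinate, not dismiss the $r$-part as free.
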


\begin{proof} See Appendix A. \end{proof}

\begin{proposition}[Closedness of moment cones]\label{closedness-N}  The following assertions are true.
$\phantom{x}$
\begin{itemize}[nosep]
\item[$\rm(i)$] If $ \mathscr{V}$ is compact and 
\begin{equation}
\label{eq_4.2zab}
\forall v\in \mathscr{V},\; \exists \bar x\in X: \la v^1,\bar x\ra<v^2, 
\end{equation}
then $\mathcal{N}_1$ is closed.

\item[$\rm(ii)$]  If $T$ is compact, $t\mapsto \sup_{v\in \U_t} [\langle v^1, x\rangle-v^2]$ is usc for all $x\in X$, and
\begin{equation}
\label{eq_4.3zab}
\forall t\in T ,\; \exists x_t\in X: \sup_{v\in \U_t} [\langle v^1, x_t\rangle-v^2]<0, 
\end{equation}
then $\mathcal{N}_4$ is closed.

\item[$\rm(iii)$]  If $\U_t$ is compact for all $t\in T$, $u\mapsto \sup_{t\in T}[\la u_t^1, x\ra - u_t^2]$ is usc for all  $x\in X$, and 
\begin{equation}
\label{eq_4.4zab}
\forall u \in \mathscr{U},\; \exists x_u\in X: \sup_{t\in T}[\la u_t^1, x_u\ra - u_t^2]<0, 
\end{equation}
then  $\mathcal{N}_5$ is closed.

\item[$\rm(iv)$]  If the following condition holds
\begin{equation}
\label{eq_4.5zab}
 \exists x\in X:  \sup_{v\in  \mathscr{V}} [\la v^1, x\ra - v^2]<0, 
\end{equation}
then $\mathcal{N}_7$ is closed.
\end{itemize}
\end{proposition}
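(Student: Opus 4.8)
\textbf{Proof proposal for Proposition \ref{closedness-N}.}

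The plan is to treat each of the four moment cones by reducing it, via the appropriate ``setting'' used in Theorems \ref{thm_3.2} and \ref{thm_3.2bis}, to an instance of the cone $\mathcal{M}_0$ (or $\mathcal{M}_1$) of Section 2, and then to invoke Proposition \ref{pro_suffcond}(ii) (closedness of $\mathcal{M}_0$ under compactness of the uncertainty set, $S^+$-usc of the data, and the Slater-type condition $(C_0)$). For part (i), I would take $Z=\mathbb{R}$, $S=\mathbb{R}_+$, $\U=\mathscr{V}$, and $G_v(x)=\langle v^1,x\rangle-v^2$ for $v=(v^1,v^2)\in\mathscr{V}$, so that $\mathcal{M}_0$ collapses to $\mathcal{N}_1=\cone\mathscr{V}+\mathbb{R}_+(0_{X^*},1)$ (this is exactly the computation already done in the proof of $[({\rm c}_1)\Leftrightarrow({\rm d}_1)]$). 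Here $\U=\mathscr{V}$ is compact by hypothesis, $Z=\mathbb{R}$ is a normed space, each map $v\mapsto G_v(x)=\langle v^1,x\rangle-v^2$ is affine hence continuous, hence $\mathbb{R}_+$-usc (Remark \ref{rem_2.1eeee}(iii)), and condition \eqref{eq_4.2zab} is precisely the Slater condition $(C_0)$ in this setting (with $\inte S=\inte\mathbb{R}_+=(0,+\infty)$). Thus Proposition \ref{pro_suffcond}(ii) applies and $\mathcal{N}_1$ is closed.

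Parts (ii), (iii), (iv) follow the same pattern with the settings used for $({\rm c}_4)$, $({\rm c}_5)$, $({\rm c}_7)$ respectively. For (ii): $Z=\mathbb{R}$, $S=\mathbb{R}_+$, $\U=T$, $G_t(x)=\sup_{v\in\U_t}[\langle v^1,x\rangle-v^2]$; then $\mathcal{M}_0=\mathcal{N}_4$ by the computation in the proof of $[({\rm c}_4)\Leftrightarrow({\rm d}_4)]$, $T$ is compact, each $G_t$ is a supremum of continuous affine functions hence convex and lsc but we only need $t\mapsto G_t(x)$ to be usc, which is the stated hypothesis, and \eqref{eq_4.3zab} is $(C_0)$. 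For (iii): $Z=\mathbb{R}$, $S=\mathbb{R}_+$, $\U=\mathscr{U}$, $G_u(x)=\sup_{t\in T}[\langle u^1_t,x\rangle-u^2_t]$; then $\mathcal{M}_0=\mathcal{N}_5$ by the proof of $[({\rm c}_5)\Leftrightarrow({\rm d}_5)]$, $\mathscr{U}$ need not be compact in general, but note that in this item each $\U_t$ is assumed compact, so $\mathscr{U}=\prod_{t\in T}\U_t$ is compact by Tychonoff's theorem; $u\mapsto G_u(x)$ is usc by hypothesis, and \eqref{eq_4.4zab} is $(C_0)$. For (iv): $Z=\mathbb{R}$, $S=\mathbb{R}_+$, $\U$ a singleton, $G=\sup_{v\in\mathscr{V}}[\langle v^1,\cdot\rangle-v^2]$; then $\mathcal{M}_0=\mathcal{N}_7$ by the proof of $[({\rm c}_7)\Leftrightarrow({\rm d}_7)]$, the singleton $\U$ is trivially compact, $u\mapsto G(x)$ on a singleton is trivially usc, and \eqref{eq_4.5zab} is $(C_0)$; apply Proposition \ref{pro_suffcond}(ii) (or Remark \ref{rem_2.1}) to conclude $\mathcal{N}_7$ is closed.

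The only point requiring a little care — and the main obstacle — is that Proposition \ref{pro_suffcond}(ii) as stated also requires $\inte S\neq\emptyset$, which holds here since $S=\mathbb{R}_+$ and $\inte\mathbb{R}_+=(0,+\infty)\neq\emptyset$, and it requires $G_u$ to be $S$-convex and $S$-epi closed; in each of the four settings $G_u$ (resp.\ $G_t$, $G$) is a pointwise supremum of continuous affine functions, hence convex and lsc on $X$, which gives both $S$-convexity and $S$-epi closedness for $S=\mathbb{R}_+$. One should also double-check that the $S^+$-usc hypotheses in (ii) and (iii) are exactly the ``$u\mapsto G_u(x)$ is $\mathbb{R}_+$-usc for all $x$'' hypothesis of Proposition \ref{pro_suffcond}(ii): by Remark \ref{rem_2.1eeee}(iii) this is the same as ordinary upper semicontinuity in the variable $u$ (resp.\ $t$), which is what is assumed. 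With these identifications the four assertions are immediate consequences of Proposition \ref{pro_suffcond}(ii), and no further computation beyond the moment-cone identities already established in Section 4 is needed.
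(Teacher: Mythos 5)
Your proposal is correct and follows essentially the same route as the paper's own proof (Appendix B): each $\mathcal{N}_i$ is identified with the cone $\mathcal{M}_0$ under the corresponding setting from Theorems \ref{thm_3.2} and \ref{thm_3.2bis}, and Proposition \ref{pro_suffcond}(ii) is then applied, with compactness of $\mathscr{U}$ in part (iii) obtained from Tychonoff's theorem and the singleton/Slater argument in part (iv), exactly as in the paper. Your added checks (nonempty interior of $\mathbb{R}_+$, $S$-convexity and $S$-epi closedness of the suprema of affine functions, and the identification of usc with $\mathbb{R}_+$-usc via Remark \ref{rem_2.1eeee}(iii)) only make explicit details the paper leaves implicit.
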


\begin{proof} See Appendix B. \end{proof}

\section{Farkas-Type Results for  Infinite Linear systems with Uncertainty }

We retain the notations used in Sections 2, 3, and 4. 

 
   Let $ c \in X^\ast$, $T$ be an index set (possibly infinite), and let $\U_t $ be uncertainty set for each $t \in T$.   Consider the robust linear system of the form 
 \begin{align}
\ &  \langle x^\ast,x \rangle \le r,\qquad\forall (x^\ast,r)\in\U_t,\;  \forall t\in T,
\end{align}
which is the constraint system of the problem $(\mathrm{RLIP}_c)$ considered in Section 4. 

Based on the stable strong robust duality   results established in Section 4, we can derive the next robust Farkas-type results for the linear systems with uncertainty parameters (for a short survey on Farkas-type results, see, e.g., \cite{DJ-Top}).

\begin{corollary}[Robust Farkas lemma for linear system I]\label{cor_6.1ff}  Let $\emptyset \ne  \mathscr{V}   \subset X^\ast \times \R$.  
The following statements are equivalent:

\noindent
$({\rm i})$ For all $(c,s )\in X^\ast\times \mathbb{R}$ such that $\inf({\rm RLIP}_c) > - \infty$,  $x \in X$, the  next assertions are equivalent:

$(\alpha)$  $\langle x^\ast,x \rangle \le r,\;\forall (x^\ast,r)\in \mathscr{V} \; \Longrightarrow\; \la c,x\ra \ge s$,

$(\beta)$ $\exists (\bar x^\ast, \bar r) \in\mathscr{V},\; \exists \bar\lambda\ge 0 : 
\begin{cases}
\bar\lambda \bar x^\ast=-c,\\
\bar \lambda \bar r\le - s,
\end{cases}
$

\noindent 
$({\rm ii})$ $\cone \mathscr{V}+  \mathbb{R}_+ (0_{X^*},1)$ is convex and closed.
\end{corollary}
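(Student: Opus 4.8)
The plan is to recognize that Corollary \ref{cor_6.1ff} is simply the ``iff'' bridge between the Farkas-type characterization of Proposition \ref{lem_FL} (specialized to the linear-infinite setting via Corollary \ref{lem_FL3}, or more directly via the case $i=1$ of Theorem \ref{thm_3.2}) and the stable robust strong duality characterization $(\mathrm{c}_1)\Leftrightarrow(\mathrm{d}_1)$ established in Theorem \ref{thm_3.2}. Concretely, I will fix the data so that the constraint system is exactly $\langle x^\ast,x\rangle\le r,\ \forall(x^\ast,r)\in\mathscr{V}$, i.e.\ set $Z=\mathbb{R}$, $S=\mathbb{R}_+$, $\U=\mathscr{V}$, $A_v=v^1$, $\omega_v=v^2$, exactly as in the proof of $[(\mathrm{c}_1)\Leftrightarrow(\mathrm{d}_1)]$, so that the relevant moment cone is $\mathcal{M}_1=\cone\mathscr{V}+\mathbb{R}_+(0_{X^\ast},1)$, which is the set appearing in (ii).

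The proof then has two directions. For $(\mathrm{ii})\Rightarrow(\mathrm{i})$: assume $\cone\mathscr{V}+\mathbb{R}_+(0_{X^\ast},1)$ is closed and convex. The implication $(\beta)\Rightarrow(\alpha)$ is just weak duality (if $\bar\lambda\bar x^\ast=-c$ and $\bar\lambda\bar r\le -s$, then $\langle x^\ast,x\rangle\le r$ for all $(x^\ast,r)\in\mathscr{V}$ forces $\langle c,x\rangle=-\bar\lambda\langle\bar x^\ast,x\rangle\ge -\bar\lambda\bar r\ge s$) and needs no closedness/convexity. For $(\alpha)\Rightarrow(\beta)$: given $(c,s)$ with $\inf(\mathrm{RLIP}_c)>-\infty$ and $x\in X$ with $(\alpha)$ holding, note $(\alpha)$ says $s\le\inf(\mathrm{RLIP}_c)=:r_0\in\mathbb{R}$; by Theorem \ref{thm_3.2} $(\mathrm{c}_1)\Rightarrow(\mathrm{d}_1)$, stable robust strong duality holds, so $\max(\mathrm{RLID}^1_c)=r_0$, meaning there exist $(\bar x^\ast,\bar r)\in\mathscr{V}$ and $\bar\lambda\ge 0$ with $-\bar\lambda\bar x^\ast=c$ (a.k.a.\ $c=-\bar\lambda(\bar x^\ast)$, the constraint of $(\mathrm{RLID}^1_c)$) and $-\bar\lambda\bar r=r_0\ge s$, i.e.\ $\bar\lambda\bar r\le -s$; this is exactly $(\beta)$. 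So (i) holds.

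For $(\mathrm{i})\Rightarrow(\mathrm{ii})$: I will show $(\mathrm{i})$ forces the equivalence $(\alpha)\Leftrightarrow(\beta)$ to hold in the form needed to invoke the Farkas characterization and deduce closedness/convexity of $\mathcal{M}_1$. The cleanest route is: $(\mathrm{i})$ implies, for every $c$ with $\inf(\mathrm{RLIP}_c)>-\infty$, the stable robust strong duality (take $x$ a minimizer-approaching point / use $s=\inf(\mathrm{RLIP}_c)$ in $(\alpha)$, which holds by definition of the infimum, and read off $(\beta)$ to get attainment $\max(\mathrm{RLID}^1_c)=\inf(\mathrm{RLIP}_c)$); then by Theorem \ref{thm_3.2} $(\mathrm{d}_1)\Rightarrow(\mathrm{c}_1)$, $\mathcal{N}_1=\cone\mathscr{V}+\mathbb{R}_+(0_{X^\ast},1)$ is closed and convex. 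I should be mildly careful that the hypothesis ``$\inf(\mathrm{RLIP}_c)>-\infty$'' is quantified inside (i), matching the standing assumption in Theorem \ref{thm_StrD}; and that feasibility of $(\mathrm{RLIP}_c)$ for all $c$ was already assumed in Section 3, so $\inf(\mathrm{RLIP}_c)<+\infty$ always and hence $\inf(\mathrm{RLIP}_c)\in\mathbb{R}$ whenever the lower-bound hypothesis holds.

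The main obstacle, and the only place requiring genuine care rather than bookkeeping, is the direction $(\mathrm{i})\Rightarrow(\mathrm{ii})$: one must check that the purely statement-level equivalence $(\alpha)\Leftrightarrow(\beta)$ in (i) really does encode stable robust strong duality (attainment included) for \emph{all} admissible $c$, not merely for those $c$ attaining a given value, so that $(\mathrm{d}_1)$ in Theorem \ref{thm_3.2} applies verbatim. In particular I must confirm the translation ``$(\beta)$ holds'' $\Leftrightarrow$ ``$(c,s)\in-\mathcal{M}_1$ with $s$ realized as a dual-feasible value'' is exact, using $\epi(\lambda\bar G_v)^\ast=(\lambda v^1,\lambda v^2)+\mathbb{R}_+(0_{X^\ast},1)$ and the description of $\mathcal{M}_1$ from \eqref{M_1}; once this dictionary is set up, everything reduces to quoting Theorem \ref{thm_3.2} (case $i=1$), Corollary \ref{thm_lStrD}, and weak duality from Proposition \ref{weakduality1}.
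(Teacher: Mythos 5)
Your proposal is correct and follows essentially the same route as the paper: translate $(\alpha)$ into $\inf({\rm RLIP}_c)\ge s$ and $(\beta)$ into the existence of a dual-feasible point of $({\rm RLID}^1_c)$ with value at least $s$, use Theorem \ref{thm_3.2} (case $i=1$) plus weak duality for $({\rm ii})\Rightarrow({\rm i})$, and for $({\rm i})\Rightarrow({\rm ii})$ take $s=\inf({\rm RLIP}_c)$ to extract dual attainment and then apply Theorem \ref{thm_3.2} in the reverse direction. The only cosmetic difference is that you verify $(\beta)\Rightarrow(\alpha)$ by direct computation where the paper cites weak duality, which amounts to the same thing.
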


\begin{proof}
Take $(c,s)\in X^\ast\times \mathbb{R}$. Set $\Lambda := \{ (x^*, r , \lambda) :  (x^*, r ) \in \mathscr{V},  \lambda \in \R_+, \lambda x^* = -c \}$ 
and  $\Phi (x^*, r, \lambda) = - \lambda r$ for all   $(\bar x^*, \bar r, \bar \lambda) \in \Lambda $.  So, $\sup ({\rm RLID}_c^1) = \sup\limits_{(x^*, r , \lambda) \in \Lambda}  \Phi (x^*, r, \lambda)$.  
From the statements of the problems $({\rm RLIP}_c) $ and $({\rm RLID}_c^1) $, one has   
  \begin{eqnarray} 
  (\alpha)       &\Longleftrightarrow&   \inf ({\rm RLIP}_c)\ge s, \label{eqalpha}  \\
  (\beta)    &\Longleftrightarrow& \Big( \exists(\bar x^*, \bar r, \bar \lambda) \in \Lambda:   \sup ({\rm RLID}_c^1)\ge    \Phi (\bar x^*, \bar r, \bar \lambda) = -\bar \lambda \bar r       \geq s\Big)    \label{eqbeta}. 
  \end{eqnarray}
$\bullet$ $[(ii)  \Longrightarrow  (i)]$ Assume that $(ii) $ holds. Then it follows from Theorem \ref{thm_3.2} (with $i=1$) that 
\begin{eqnarray} \label{eqFarkas1}
(ii)   &\Longleftrightarrow&  \Big(\textrm{the stable  robust strong  duality holds for the pair } ({\rm RLIP}_c)-({\rm RLID}_c^1) \Big)  \notag\\
 &\Longleftrightarrow& \Big( \forall c \in X^\ast, \ \ \inf({\rm RLIP}_c) = \max ({\rm RLID}_c^1) \Big). 
\end{eqnarray} 
So, for  $c \in X^*$,   if $(\alpha)$ holds then  $\inf ({\rm RLIP}_c)\ge s$ and hence, we get from \eqref{eqFarkas1}, 
$$ \inf ({\rm RLIP}_c)  = \max  ({\rm RLID}_c) = \Phi (\bar x^*, \bar r, \bar \lambda)  = -\bar \lambda \bar r  \geq s,
$$ for some $ (\bar x^*, \bar r, \bar \lambda) \in \Lambda$, which means that $(\beta)$ holds, and so $[(\alpha) \Longrightarrow (\beta)]$. 

Conversely, if $(\beta)$ holds, then from \eqref{eqbeta} and the weak duality of the primal-dual pair $({\rm RLIP}_c)-({\rm RLID}_c^1)$, one gets the existence of $(\bar x^*, \bar r, \bar \lambda) \in \Lambda $ such that 
$$  
   \inf ({\rm RLIP}_c)  \geq     \sup ({\rm RLID}_c^1)\ge    \Phi (\bar x^*, \bar r, \bar \lambda) = -\bar \lambda \bar r       \geq s,    
$$
yielding $(\alpha)$. So, $[(\beta) \Longrightarrow (\alpha)]$ and consequently, we have proved that  $[(ii)  \Longrightarrow  (i)]$.

$\bullet$ $[(i)  \Longrightarrow  (ii)]$  Assume that $(i)$ holds.  Take   $s = \inf ({\rm RLIP}_c)  \in \R$    and  $c \in X^*$.   Then $(\alpha)$ holds and as $(i)$ holds, $(\beta)$ holds as well. This, together with the weak duality,  yields, for some   $ (\bar x^*, \bar r, \bar \lambda) \in \Lambda$  (see \eqref{eqbeta}),  
  $$  
  \inf ({\rm RLIP}_c)  \geq    \sup ({\rm RLID}_c^1) =   \Phi (\bar x^*, \bar r, \bar \lambda) = -\bar \lambda \bar r     \geq s = \inf ({\rm RLIP}_c), 
$$ 
meaning that the robust dual problem $({\rm RLID}_c^1)$ attains and $ \inf ({\rm RLIP}_c)  =    \max ({\rm RLID}_c^1) $. 
Since $c\in X^*$ is arbitrary, 
the stable  robust strong  duality holds for the pair  $({\rm RLIP}_c)-({\rm RLID}_c^1) $.  The fulfillment of $(ii)$ now follows from  Theorem \ref{thm_3.2} (with $i=1$).   
\end{proof}

\begin{remark}
Assume that  $\mathscr{V}$ is a convex and compact subset  of $X^\ast\times \mathbb{R}$ and that the Slater-type condition \eqref{eq_4.2zab} holds. According to Propositions \ref{convexity-N} and \ref{closedness-N}, one has $\mathcal{N}_1:= \cone \mathscr{V}+  \mathbb{R}_+ (0_{X^*},1)$ is closed and convex. So, it follows from Corollary \ref{cor_6.1ff}, $(\alpha)$ and $(\beta)$ in Corollary \ref{cor_6.1ff} are equivalent. This observation may apply to some of the next corollaries. 
\end{remark}

The next versions of robust Farkas lemmas follows from the same way as Corollary \ref{cor_6.1ff}, using  Theorem \ref{thm_3.2} with $i=2, 3$, and $i= 4$. 

\begin{corollary}[Robust Farkas lemma for linear system II]\label{cor_6.2ff}
The following statements are equivalent:

\noindent
$({\rm i})$ For all $(c,s)\in X^\ast\times \mathbb{R}$  such that $\inf({\rm RLIP}_c) > - \infty$,  $x \in X$,  the    next assertions are equivalent:

$(\alpha)$  $\langle x^\ast,x \rangle \le r,\;\forall (x^\ast,r)\in \mathscr{V}\; \Longrightarrow\; \la c,x\ra \ge s$,

$(\gamma)$ $ \exists \bar t\in T,\; \exists \bar \lambda \in \mathbb{R}^{(\mathscr{U})}_+ : 
\begin{cases}
\sum\limits_{u\in \supp \lambda}\bar\lambda_u u^1_{\bar t}=-c,\\
\sum\limits_{u\in \supp \lambda}\bar\lambda_u u^2_{\bar t}\le -s,
\end{cases}
$

\noindent 
$({\rm ii})$ $\bigcup_{ t\in T}\co\cone [\U_t\cup\{ (0_{X^*},1)\}]$ is convex and closed.
\end{corollary}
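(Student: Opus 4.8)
The plan is to follow the same template that proves Corollary~\ref{cor_6.1ff}, only replacing the primal-dual pair $({\rm RLIP}_c)-({\rm RLID}_c^1)$ by $({\rm RLIP}_c)-({\rm RLID}_c^2)$ and using Theorem~\ref{thm_3.2} with $i=2$ in place of $i=1$. First I would fix $(c,s)\in X^\ast\times\mathbb{R}$, introduce the feasible set of the dual problem
$$\Lambda_2:=\Big\{(t,\lambda)\in T\times\mathbb{R}^{(\mathscr{U})}_+:\ \textstyle\sum_{u\in\supp\lambda}\lambda_u u^1_t=-c\Big\}$$
together with the objective $\Phi_2(t,\lambda):=-\sum_{u\in\supp\lambda}\lambda_u u^2_t$, so that $\sup({\rm RLID}_c^2)=\sup_{(t,\lambda)\in\Lambda_2}\Phi_2(t,\lambda)$. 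Exactly as in the proof of Corollary~\ref{cor_6.1ff}, one reads off from the statements of the two problems that $(\alpha)\Longleftrightarrow\big(\inf({\rm RLIP}_c)\ge s\big)$ and $(\gamma)\Longleftrightarrow\big(\exists(\bar t,\bar\lambda)\in\Lambda_2:\ \sup({\rm RLID}_c^2)\ge\Phi_2(\bar t,\bar\lambda)\ge s\big)$; these are just unwindings of the definitions of $({\rm RLIP}_c)$ and $({\rm RLID}_c^2)$.

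Next I would prove $[(ii)\Longrightarrow(i)]$. Assuming $\mathcal{N}_2=\bigcup_{t\in T}\co\cone[\U_t\cup\{(0_{X^\ast},1)\}]$ is convex and closed, Theorem~\ref{thm_3.2} with $i=2$ gives that the stable robust strong duality holds for $({\rm RLIP}_c)-({\rm RLID}_c^2)$, i.e.\ $\inf({\rm RLIP}_c)=\max({\rm RLID}_c^2)$ for every $c\in X^\ast$. Then for the given $c$: if $(\alpha)$ holds we have $\inf({\rm RLIP}_c)\ge s$, so by strong duality there is $(\bar t,\bar\lambda)\in\Lambda_2$ with $\Phi_2(\bar t,\bar\lambda)=\inf({\rm RLIP}_c)\ge s$, which is precisely $(\gamma)$; conversely, if $(\gamma)$ holds then by weak duality (Proposition~\ref{weakduality1}) $\inf({\rm RLIP}_c)\ge\sup({\rm RLID}_c^2)\ge\Phi_2(\bar t,\bar\lambda)\ge s$, i.e.\ $(\alpha)$ holds. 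For $[(i)\Longrightarrow(ii)]$ I would take $c\in X^\ast$ arbitrary, set $s:=\inf({\rm RLIP}_c)$ (which lies in $\mathbb{R}$ by the standing feasibility assumption and the hypothesis $\inf({\rm RLIP}_c)>-\infty$ built into~$(i)$); then $(\alpha)$ holds trivially, hence $(\gamma)$ holds, and combining $(\gamma)$ with weak duality forces $\inf({\rm RLIP}_c)=\max({\rm RLID}_c^2)$. Since $c$ was arbitrary, the stable robust strong duality holds for the pair, and Theorem~\ref{thm_3.2} with $i=2$ yields $(ii)$.

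The proof is essentially bookkeeping, so there is no deep obstacle; the main point of care is the same one flagged in Corollary~\ref{cor_6.1ff}, namely the handling of the finiteness hypothesis $\inf({\rm RLIP}_c)>-\infty$ in statement~$(i)$ versus the standing assumption that $({\rm RLIP}_c)$ is feasible for all $c$ (so that $\inf({\rm RLIP}_c)<+\infty$) — together these guarantee $s=\inf({\rm RLIP}_c)\in\mathbb{R}$, which is exactly what is needed to invoke Theorem~\ref{thm_3.2} (whose hypothesis is $r_0=\inf({\rm RLIP}_c)>-\infty$, inherited through the reduction to $({\rm RP}_c)$). One should also double-check that the change of variables in the $[({\rm c}_2)\Leftrightarrow({\rm d}_2)]$ part of Theorem~\ref{thm_3.2} (the setting $Z=\mathbb{R}^{\mathscr{U}}$, $\U=T$, $A_t=(u^1_t)_{u\in\mathscr{U}}$, $\omega_t=(u^2_t)_{u\in\mathscr{U}}$) really does identify $({\rm RLID}_c^2)$ with the dual $({\rm RLD}_c)$ of the resulting $({\rm RLP}_c)$ and $\mathcal{M}_1$ with $\mathcal{N}_2$ — but this is already carried out in the proof of that theorem, so here it may simply be cited. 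Hence I would end the proof with a remark that it proceeds verbatim as Corollary~\ref{cor_6.1ff} with Theorem~\ref{thm_3.2}$(i=2)$ in place of $(i=1)$.
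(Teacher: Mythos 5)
Your proposal is correct and matches the paper's intent exactly: the paper itself disposes of this corollary by remarking that it ``follows in the same way as Corollary~\ref{cor_6.1ff}, using Theorem~\ref{thm_3.2} with $i=2$'', which is precisely the template you carry out (with $\Lambda_2$, $\Phi_2$, weak duality from Proposition~\ref{pro_3.3gg}, and the same care about $\inf({\rm RLIP}_c)>-\infty$). Nothing further is needed.
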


\begin{corollary}[Robust Farkas lemma for linear system III] {\rm \cite[Theorem 5.6]{DMVV17-Robust-SIAM}, \cite[Corollary 3]{GJLL},\cite[Theorem 6.1(i)]{DGLM17-Optim}}
\label{cor_6.3ff}
The following statements are equivalent:

\noindent
$({\rm i})$ For all $(c,s)\in X^\ast\times \mathbb{R}$,    such that $\inf({\rm RLIP}_c) > - \infty$,  $x \in X$,   the    next assertions are equivalent:

$(\alpha)$  $\langle x^\ast,x \rangle \le r,\;\forall (x^\ast,r)\in \mathscr{V}\; \Longrightarrow\; \la c,x\ra \ge s$,

$(\delta)$  $\exists \bar u\in\mathscr{U},\;\exists \bar \lambda \in \mathbb{R}^{(T)}_+ : 
\begin{cases}
\sum\limits_{t\in \supp\lambda}\bar\lambda_t \bar{u}^1_t=-c,\\
\sum\limits_{t\in \supp\lambda}\bar\lambda_t \bar{u}^2_t\le -s,
\end{cases}
$

\noindent 
$({\rm ii})$ $\bigcup_{ u\in\mathscr{U}}\co\cone [u(T)\cup\{ (0_{X^*},1)\}]$ is convex and closed, 
where $u(T):=\{u_t: t\in T\}$ for all $u\in\mathscr{U}$.
\end{corollary}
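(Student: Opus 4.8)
The plan is to mirror the proof of Corollary \ref{cor_6.1ff} almost verbatim, substituting the index $i=3$ everywhere the earlier proof used $i=1$. So, fix $(c,s)\in X^\ast\times\mathbb{R}$, introduce the dual-feasible set
\[
\Lambda:=\Big\{(u,\lambda)\in\mathscr{U}\times\mathbb{R}^{(T)}_+:\ \sum_{t\in\supp\lambda}\lambda_t u^1_t=-c\Big\}
\]
together with the objective $\Phi(u,\lambda):=-\sum_{t\in\supp\lambda}\lambda_t u^2_t$, so that $\sup({\rm RLID}_c^3)=\sup_{(u,\lambda)\in\Lambda}\Phi(u,\lambda)$. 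Exactly as in \eqref{eqalpha}--\eqref{eqbeta}, the statement $(\alpha)$ is equivalent to $\inf({\rm RLIP}_c)\ge s$, and $(\delta)$ is equivalent to the existence of $(\bar u,\bar\lambda)\in\Lambda$ with $\Phi(\bar u,\bar\lambda)=-\sum_{t\in\supp\bar\lambda}\bar\lambda_t\bar u^2_t\ge s$; here one just needs to notice that the two displayed conditions in $(\delta)$ say precisely $(\bar u,\bar\lambda)\in\Lambda$ and $\Phi(\bar u,\bar\lambda)\ge s$.

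Next I would invoke Theorem \ref{thm_3.2} with $i=3$, which gives the equivalence $[({\rm c}_3)\Leftrightarrow({\rm d}_3)]$, i.e. $\mathcal{N}_3=\bigcup_{u\in\mathscr{U}}\co\cone[u(T)\cup\{(0_{X^*},1)\}]$ is closed and convex if and only if the stable robust strong duality $\inf({\rm RLIP}_c)=\max({\rm RLID}_c^3)$ holds for all $c\in X^\ast$. For the direction $[(ii)\Rightarrow(i)]$: assuming $(ii)$, stable strong duality holds, so if $(\alpha)$ holds then $\inf({\rm RLIP}_c)\ge s$ and by attainment there is $(\bar u,\bar\lambda)\in\Lambda$ with $\Phi(\bar u,\bar\lambda)=\inf({\rm RLIP}_c)\ge s$, giving $(\delta)$; conversely if $(\delta)$ holds then weak duality (Proposition \ref{weakduality1}) yields $\inf({\rm RLIP}_c)\ge\sup({\rm RLID}_c^3)\ge\Phi(\bar u,\bar\lambda)\ge s$, so $(\alpha)$ holds. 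For $[(i)\Rightarrow(ii)]$: take $s=\inf({\rm RLIP}_c)\in\mathbb{R}$ (finite by feasibility and the hypothesis $\inf({\rm RLIP}_c)>-\infty$); then $(\alpha)$ holds trivially, hence $(\delta)$ holds by $(i)$, and combining with weak duality forces $\inf({\rm RLIP}_c)=\max({\rm RLID}_c^3)$; since $c$ is arbitrary, stable strong duality holds and Theorem \ref{thm_3.2} gives $(ii)$.

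The only genuine content beyond bookkeeping is the translation between the system-theoretic phrasing in $(\alpha)$/$(\delta)$ and the optimization phrasing in $({\rm RLIP}_c)$/$({\rm RLID}_c^3)$; this is routine once one writes down the two equivalences \eqref{eqalpha}--\eqref{eqbeta} in the present notation, and it is where I would be slightly careful about the finiteness of $s$ and the case $\supp\lambda=\emptyset$ (which forces $c=0$ and is handled by the convention on the dual product). I do not expect any real obstacle: the statement is a direct re-packaging of Theorem \ref{thm_3.2}$(i=3)$ plus weak duality, and the proof is word-for-word parallel to that of Corollary \ref{cor_6.1ff}. One could add a one-line remark that this recovers \cite[Theorem 5.6]{DMVV17-Robust-SIAM}, \cite[Corollary 3]{GJLL}, and \cite[Theorem 6.1(i)]{DGLM17-Optim}, as already indicated in the statement.
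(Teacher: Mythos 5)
Your proof is correct and follows essentially the same route as the paper: the paper explicitly disposes of this corollary by noting it "follows from the same way as Corollary \ref{cor_6.1ff}, using Theorem \ref{thm_3.2} with $i=3$", which is precisely the argument you spelled out (dual-feasible set $\Lambda$, objective $\Phi$, the two equivalences analogous to \eqref{eqalpha}--\eqref{eqbeta}, weak duality, and Theorem \ref{thm_3.2} for $i=3$). No gaps; your extra care about finiteness of $s$ and the $\supp\bar\lambda=\emptyset$ convention is consistent with the paper's treatment.
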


\begin{corollary} [Robust Farkas lemma for linear system IV]     \label{cor_6.4ff}
The following statements are equivalent:

\noindent
$({\rm i})$ For all $(c,s)\in X^\ast\times \mathbb{R}$,   such that $\inf({\rm RLIP}_c) > - \infty$,  $x \in X$,   the   next assertions are equivalent:

$(\alpha)$  $\langle x^\ast,x \rangle \le r,\;\forall (x^\ast,r)\in \mathscr{V}\; \Longrightarrow\; \la c,x\ra \ge s$,

$(\epsilon)$   $\exists \bar t\in T,\; \exists \bar\lambda\ge 0$ such that $\forall x\in X, \;\forall \varepsilon>0,\; \exists (x_0^\ast, r_0)\in \U_{\bar t}$ satisfying
$$ \la c+\bar\lambda x_0^\ast,x \ra -\bar\lambda r_0  \ge s - \varepsilon ,$$

\noindent 
$({\rm ii})$ $\bigcup_{ u\in\mathscr{U}}\co\cone [u(T)\cup\{ (0_{X^*},1)\}]$ is convex and closed.
\end{corollary}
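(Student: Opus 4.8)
The plan is to run the argument of Corollary~\ref{cor_6.1ff} essentially verbatim, but anchored on the dual pair $({\rm RLIP}_c)-({\rm RLID}_c^4)$ and on the case $i=4$ of Theorem~\ref{thm_3.2}. The first task is to recognize that the opaque-looking condition $(\epsilon)$ is nothing but the attainment statement for $({\rm RLID}_c^4)$. Indeed, for fixed $\bar t\in T$ and $\bar\lambda\ge 0$, the clause ``$\forall x\in X,\ \forall\varepsilon>0,\ \exists(x_0^\ast,r_0)\in\U_{\bar t}:\ \la c+\bar\lambda x_0^\ast,x\ra-\bar\lambda r_0\ge s-\varepsilon$'' says precisely that $\sup_{v\in\U_{\bar t}}[\la c+\bar\lambda v^1,x\ra-\bar\lambda v^2]\ge s$ for every $x\in X$, that is, $\inf_{x\in X}\sup_{v\in\U_{\bar t}}[\la c+\bar\lambda v^1,x\ra-\bar\lambda v^2]\ge s$. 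Thus $(\epsilon)$ asserts exactly the existence of a pair $(\bar\lambda,\bar t)$ at which the supremum defining $({\rm RLID}_c^4)$ is attained with value $\ge s$, which plays here the role that $(\beta)$ plays in Corollary~\ref{cor_6.1ff}.

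With this reading, I would set up the bookkeeping as in Corollary~\ref{cor_6.1ff}. By \eqref{eqalpha}, $(\alpha)$ is equivalent to $\inf({\rm RLIP}_c)\ge s$, and by the previous paragraph $(\epsilon)$ is equivalent to the existence of a maximizer of $({\rm RLID}_c^4)$ of value $\ge s$. The two implications then follow mechanically from the weak duality $\sup({\rm RLID}_c^4)\le\inf({\rm RLIP}_c)$ supplied by Proposition~\ref{weakduality1}: for $[(\epsilon)\Rightarrow(\alpha)]$ one chains $\inf({\rm RLIP}_c)\ge\sup({\rm RLID}_c^4)\ge s$; for $[(\alpha)\Rightarrow(\epsilon)]$ one uses the strong-duality equality $\inf({\rm RLIP}_c)=\max({\rm RLID}_c^4)$ to produce the attaining pair $(\bar\lambda,\bar t)$; and in the $[(i)\Rightarrow(ii)]$ direction one specializes to $s=\inf({\rm RLIP}_c)$, which is legitimate because the standing hypothesis $\inf({\rm RLIP}_c)>-\infty$ together with feasibility makes this value finite. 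In this way the content of $(i)$, namely that $(\alpha)\Leftrightarrow(\epsilon)$ for all admissible $(c,s)$, is shown to be equivalent to the assertion that stable robust strong duality holds for the pair $({\rm RLIP}_c)-({\rm RLID}_c^4)$ for all $c\in X^\ast$.

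It then remains to convert ``stable robust strong duality for $({\rm RLIP}_c)-({\rm RLID}_c^4)$'' into the closedness-and-convexity condition $(ii)$. Here I would invoke Theorem~\ref{thm_3.2} in the reformulation that produces $({\rm RLID}_c^4)$, i.e.\ $Z=\mathbb{R}$, $S=\mathbb{R}_+$, $\U=T$ and $G_t(x)=\sup_{v\in\U_t}[\la v^1,x\ra-v^2]$. The equivalence $[({\rm c}_4)\Leftrightarrow({\rm d}_4)]$ of that theorem states precisely that this stable duality is equivalent to the closedness and convexity of the associated robust moment cone $\mathcal{M}_0$, and the computation carried out in the proof of Theorem~\ref{thm_3.2} evaluates $\mathcal{M}_0$ explicitly. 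Composing this with the equivalence of the second paragraph delivers $(i)\Leftrightarrow(ii)$.

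The delicate point, which I expect to be the main obstacle, is the very last identification: one must verify that the robust moment cone $\mathcal{M}_0$ delivered by the $i=4$ reformulation of $({\rm RLID}_c^4)$ has the same closedness and convexity status as the cone $\bigcup_{u\in\mathscr{U}}\co\cone[u(T)\cup\{(0_{X^\ast},1)\}]$ recorded in $(ii)$. Two things require care in this step. First, in forming $\mathcal{M}_0=\bigcup_{t\in T}\bigcup_{\lambda\ge 0}\epi(\lambda G_t)^\ast$ one must not discard the contribution of $\lambda=0$, which adds the vertical part $\{0_{X^\ast}\}\times\mathbb{R}_+$ and is easy to lose in a naive ``$\cone$'' computation. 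Second, to relate $\mathcal{M}_0$ to the cone displayed in $(ii)$ one should use the Farkas identity $\oco\mathcal{M}_0=\epi\delta_{\F}^\ast$ of Proposition~\ref{lem_FL}, which holds for every reformulation of the one common feasible set $\F$ and therefore forces all these robust moment cones to share the same closed convex hull; it is precisely this shared-hull fact that must be leveraged to pass between the two cone descriptions.
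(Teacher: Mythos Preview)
Your first three paragraphs are exactly the paper's intended proof: the paper simply says ``the next versions of robust Farkas lemmas follow from the same argument as Corollary~\ref{cor_6.1ff}, using Theorem~\ref{thm_3.2} with $i=4$,'' and your unwinding of $(\epsilon)$ as the attainment statement for $({\rm RLID}_c^4)$ is correct and is precisely what makes that template go through.

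The discrepancy you flag in the last paragraph is real, but your proposed fix does not work and is not needed. The cone displayed in $(ii)$ is $\mathcal{N}_3=\bigcup_{u\in\mathscr{U}}\co\cone[u(T)\cup\{(0_{X^\ast},1)\}]$, whereas Theorem~\ref{thm_3.2} with $i=4$ characterizes stable strong duality of $({\rm RLIP}_c)-({\rm RLID}_c^4)$ via closedness and convexity of $\mathcal{N}_4=\bigcup_{t\in T}\cone\oco[\U_t+\mathbb{R}_+(0_{X^\ast},1)]$. This is a typographical slip in the statement of the corollary (note that $(ii)$ here is verbatim the $(ii)$ of Corollary~\ref{cor_6.3ff}); the intended condition is that $\mathcal{N}_4$ be closed and convex, and with that correction your argument is complete.

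Your ``shared closed convex hull'' idea does not salvage the statement as written. Proposition~\ref{lem_FL} indeed gives $\oco\mathcal{N}_3=\oco\mathcal{N}_4=\epi\delta_{\mathcal{F}}^\ast$, but from this one cannot conclude that ``$\mathcal{N}_3$ is closed and convex'' is equivalent to ``$\mathcal{N}_4$ is closed and convex'': each statement says the respective cone equals the common hull $\epi\delta_{\mathcal{F}}^\ast$, and there is no reason one equality should force the other. So drop the last paragraph and simply record that $(ii)$ should read $\mathcal{N}_4$.
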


\begin{remark} 
It worth noting that robust Farkas-type results can be established in the same way as in the previous corollaries, corresponding to the 
stable robust strong  duality for pairs $({\rm RLIP}_c) - ({\rm RLID}_c^j)$ with $j=5,\ldots,  9$.
 The results corresponding to $i=6$ can be considered as a version of \cite[Corollary 4]{GJLL} with $\mathscr{V}$ replacing $\gph \mathscr{U}$.
\end{remark}

\section{Linear Infinite  Problems   with Sub-affine Constraints}

The results in previous sections for robust linear infinite problems $ ({\rm RLIP}_c)$ ($c \in X^*$) can be extended to a rather broader class of robust problems by a similar approaching. Here we consider a concrete class of problems:  The robust linear problems with sub-affine constraints.  

Denote by $\mathscr{P}_0(X^\ast)$ the set of all the nonempty, $w^{\ast }-$%
closed convex subsets of $X^{\ast }$.  Let 
 $T$ be a possibly infinite index set, $(\U_t)_{t\in T}  \subset \mathscr{P}_0(X^\ast)\times \mathbb{R}$   be a collection of nonempty uncertainty sets.  
We introduce the sets 
$$\mathfrak{V}:=\bigcup_{t\in T}\U_t \quad \textrm{and}\quad \mathfrak{U}=\prod_{t\in T}\U_t.$$
By convention, for each $V \in\mathscr{P}_0(X^\ast)\times\mathbb{R}$, we write $V = (V^1, V^2)$ with  $V^1 \subset X^\ast$ and $V^2 \in \R$.  In  some case, we also considered  $V = (V^1, V^2) \in\mathscr{P}_0(X^\ast)\times\mathbb{R} $ as a subset of the set $X^\ast\times \mathbb{R}$  ideybntifying  $V $ with  $V^1\times \{V_2\}$. In the same way, for $U\in \mathfrak{U}$, we write  $U=(U_t)_{t\in T}$ with $U_t  = (U_t^1, U_t^2) \in \U_t$ for each $t \in T$.

For each $c\in X^\ast$, consider the robust linear problem with sub-affine constraints: 
\begin{align*}
({\rm RSAP}_c)\qquad &\inf\;  \langle c,x\rangle \notag\\
\textrm{subject to }\ \ \ \  & \sigma_{\A_t}(x)\le b_t,\;    \forall (\A_t,b_t)\in\U_t,\; \forall t\in T,  x \in X. 
\end{align*}
Here $\sigma_{\A_t}$ denotes the support function of the set $\A_t \subset X^\ast$, i.e., $\sigma_{\A_t}(x) := \sup\limits_{x^* \in \A_t} \la x^*, x\ra$.

We now introduce  two robust dual problems for ${\rm (RSAP}_c)$:
\begin{align*}
({\rm RSAD}_c^1)\qquad &\inf\;  - \lambda v^2 \notag\\
\textrm{subject to  } \ \ \ \ &  V\in \mathfrak{V},\; v = (v^1, v^2) \in V, \\ 
 &\lambda\ge 0, \; c=-\lambda v^1.  \notag
\end{align*}

\begin{align*}
\ \ \ \ \ \ \ \ \ ({\rm RSAD}_c^2)\qquad &\inf\;  - \sum_{U\in \supp\lambda} \lambda_Uv^2_U \notag\\
\textrm{subject to } \ \ \ & (v_U)_{U\in \mathfrak{U}}\in (U_t)_{U\in\mathfrak{U}}, \; c=-\sum_{U\in \supp\lambda} \lambda_Uv^1_U \\
 &\ t \in T, \ \  \lambda\in \mathbb{R}_+^{(\mathfrak{U})}. 
 \end{align*}

We can state stable robust strong duality for ${\rm (RSAP}_c)$ as follows:

\begin{corollary}[Stable robust strong duality for  $ ({\rm RSAP}_c)$\   I]
The following statements are equivalent:

$\rm(g_1)$ $\mathcal{R}_1:=\cone \mathfrak{U}+\mathbb{R}_+(0_{X^\ast},1) $ is a closed and convex subset  of $X^*\times \mathbb{R}$, 
 
$\rm(h_1)$  The  stable robust strong duality holds for the pair {$({\rm RSAP}_c)-({\rm RSAD}_c^1)$}, i.e.,  
$$\inf ({\rm RSAP}_c)=\max ({\rm RSAD}_c^1),\quad\forall c\in X^\ast.$$
\end{corollary}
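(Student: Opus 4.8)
The plan is to recognize $({\rm RSAP}_c)$ as a particular instance of the robust linear problem $({\rm RLP}_c)$ in \eqref{problem_RLP} (equivalently, of $({\rm RLIP}_c)$) and then to quote Corollary \ref{thm_lStrD} (equivalently, Theorem \ref{thm_3.2} with $i=1$) verbatim, in the spirit of the proof of the equivalence $[({\rm c}_1)\Leftrightarrow({\rm d}_1)]$.

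The first step is to linearize the sub-affine constraints: for each $t\in T$ and each $(\A_t,b_t)\in\U_t$ one has $\sigma_{\A_t}(x)\le b_t$ if and only if $\la x^\ast,x\ra\le b_t$ for every $x^\ast\in\A_t$. Hence, using the identification of $V=(V^1,V^2)\in\mathscr{P}_0(X^\ast)\times\mathbb{R}$ with $V^1\times\{V^2\}\subset X^\ast\times\mathbb{R}$ and writing $\widetilde{\mathfrak{V}}:=\bigcup_{V\in\mathfrak{V}}\big(V^1\times\{V^2\}\big)$, the feasible set of $({\rm RSAP}_c)$ coincides with $\{x\in X:\ \la v^1,x\ra\le v^2,\ \forall\,(v^1,v^2)\in\widetilde{\mathfrak{V}}\}$, which is nonempty by the standing assumption. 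Now set $Z=\mathbb{R}$, $S=\mathbb{R}_+$, $\U=\widetilde{\mathfrak{V}}$, and $A_v=v^1\in X^\ast$, $\omega_v=v^2\in\mathbb{R}$ for $v=(v^1,v^2)\in\widetilde{\mathfrak{V}}$; then $({\rm RSAP}_c)$ becomes exactly $({\rm RLP}_c)$. The robust moment cone $\mathcal{M}_1$ of \eqref{M_1} collapses to $\{\lambda v:\ v\in\widetilde{\mathfrak{V}},\ \lambda\ge 0\}+\mathbb{R}_+(0_{X^\ast},1)=\cone\widetilde{\mathfrak{V}}+\mathbb{R}_+(0_{X^\ast},1)$, which under the identification is $\mathcal{R}_1$. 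Next I would observe that, under the same identification, the constraint ``$V\in\mathfrak{V}$, $v=(v^1,v^2)\in V$'' means precisely ``$(v^1,v^2)\in\widetilde{\mathfrak{V}}$'', so that the dual $({\rm RLD}_c)$ of the reduced problem is nothing but $({\rm RSAD}_c^1)$. Applying Corollary \ref{thm_lStrD} to this instance then yields $[(g_1)\Leftrightarrow(h_1)]$.

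The only delicate point is bookkeeping: one must handle with care the identification of a member $V=(V^1,V^2)$ of $\mathfrak{V}$ — in which $V^1$ is itself a ($w^\ast$-closed convex) subset of $X^\ast$ — with the subset $V^1\times\{V^2\}$ of $X^\ast\times\mathbb{R}$, so that $\cone\mathfrak{V}$ as written in $(g_1)$ is read as $\cone\widetilde{\mathfrak{V}}$, and one must verify that $({\rm RSAD}_c^1)$ coincides \emph{literally} with the robust dual of the reduced problem so that the attainment (``$\max$'') assertion of Corollary \ref{thm_lStrD} transfers without change. This is essentially the whole difficulty; the linearization itself, the nonemptiness of the feasible set, and the evaluation of $\mathcal{M}_1$ are all routine.
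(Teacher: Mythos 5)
Your proposal is correct, and it is a valid but slightly different instantiation of the Section~2 machinery than the one the paper uses. You first linearize each sub-affine inequality ($\sigma_{\A}(x)\le b$ iff $\la x^\ast,x\ra\le b$ for all $x^\ast\in\A$), take as uncertainty set $\widetilde{\mathfrak{V}}=\bigcup_{V\in\mathfrak{V}}\big(V^1\times\{V^2\}\big)\subset X^\ast\times\mathbb{R}$ with affine data $A_v=v^1$, $\omega_v=v^2$, and then quote the affine special case, Corollary \ref{thm_lStrD} (the same reduction as the $i=1$ case of Theorem \ref{thm_3.2}); the cone $\mathcal{M}_1$ is then immediately $\cone\widetilde{\mathfrak{V}}+\mathbb{R}_+(0_{X^\ast},1)=\mathcal{R}_1$, which is exactly the intended reading of $\cone\mathfrak{U}$ under the paper's identification of $V$ with $V^1\times\{V^2\}$, and your check that the reduced dual $({\rm RLD}_c)$ coincides literally with $({\rm RSAD}_c^1)$ — same feasible value set, hence same supremum and same attainment, since the objective $-\lambda v^2$ depends only on $(v,\lambda)$ — is sound. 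The paper instead keeps the index set $\U=\mathfrak{V}$ and the sub-affine constraint maps $G_V=\sigma_{V^1}(\cdot)-V^2$, applies the general Theorem \ref{thm_StrD} for $({\rm RP}_c)$, and computes $\mathcal{M}_0$ via the conjugate formula $\epi(\sigma_{V^1}-V^2)^\ast=V^1\times\{V^2\}+\mathbb{R}_+(0_{X^\ast},1)$, which uses the $w^\ast$-closedness and convexity of each $V^1$. Both routes land on the same cone and the same dual: yours trades the conjugate/epigraph computation for the elementary linearization (and the reduction itself does not even need $V^1$ closed convex), while the paper's keeps the sub-affine constraint structure explicit, which is the template it reuses for the second dual $({\rm RSAD}_c^2)$ where the index set $T$ and the product structure of $\mathfrak{U}$ matter.
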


\begin{proof} 
Set  $Z=\mathbb{R}$, $S=\mathbb{R}^+$, $\U=\mathfrak{V}$ and $G_V(.)=\sigma_{V^1}(.)-V^2$ for all $V   = (V^1, V^2) \in \mathfrak{V}$.  Then the 
problem ${\rm (RSAP}_c)$ possesses the form  of $({\rm RP}_c)$.   
The corresponding robust moment cone $\mathcal{M}_0$ now becomes 
\begin{align*}
\mathcal{M}_0&=\bigcup_{(V,\lambda)\in \mathfrak{V}\times\mathbb{R}_+}\epi (\lambda G_V)^\ast
=\bigcup_{(V,\lambda)\in \mathfrak{V}\times\mathbb{R}_+}\lambda\epi ( G_V)^\ast\\
&=\cone\bigcup_{V\in\mathfrak{V}}\epi ( G_V)^\ast
=\cone\bigcup_{V\in \mathfrak{V}}\epi (\sigma_{V^1}(.)-V^2)^\ast\\
&=\cone\bigcup_{V\in \mathfrak{V}}[ V^1 \times\{V^2\} +\mathbb{R}_+(0_{X^\ast},1)]
=\cone [\mathfrak{U}+\mathbb{R}_+(0_{X^\ast},1)]\\
&=\cone \mathfrak{U}+\mathbb{R}_+(0_{X^\ast},1)=\mathcal{R}_1
\end{align*}
and the dual problem $({\rm RD}_c)$ of the resulting problem  $({\rm RP}_c)$  is exactly the problem $({\rm RSAD}_c^1)$.
The conclusion now  follows  from Theorem \ref{thm_StrD}.\end{proof}

\begin{corollary}[Stable robust strong duality  for $ ({\rm RSAP}_c)$  \ II] Assume that  for all   $V = (V^1, V^2) \in\mathfrak{V}$,    $V^1$ is a $w^*$-compact subset of $X^\ast$.   
Then the  following statements are equivalent:

$\rm(g_2)$ $\mathcal{R}_2:= \bigcup_{t\in T} \co\cone \left[ \U_t\cup \{(0_{X^*},1)\}\right] $ is a closed and convex subset  of $X^*\times \mathbb{R}$, 
 
$\rm(h_2)$  The  stable robust strong duality holds for the pair {$({\rm RSAP}_c)-({\rm RSAD}_c^2)$}.
\end{corollary}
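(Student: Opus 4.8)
The plan is to realize $({\rm RSAP}_c)$ as a special case of $({\rm RP}_c)$, exactly as in the preceding corollary, but now with a \emph{vector-valued} cone constraint engineered so that the abstract dual $({\rm RD}_c)$ unfolds into the generalized-sequence dual $({\rm RSAD}_c^2)$; then compute the robust moment cone $\mathcal{M}_0$, show it equals $\mathcal{R}_2$, identify $({\rm RD}_c)$ with $({\rm RSAD}_c^2)$, and invoke Theorem \ref{thm_StrD}. The $w^\ast$-compactness hypothesis will enter at exactly one point --- keeping a finite Minkowski sum $w^\ast$-closed --- and this is what distinguishes the present statement from the previous one, where $w^\ast$-closedness of the sets $V^1$ already sufficed.

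First I would set $Z=\mathbb{R}^{\mathfrak{U}}$, $S=\mathbb{R}_+^{\mathfrak{U}}$ (hence $Z^\ast=\mathbb{R}^{(\mathfrak{U})}$ and $S^+=\mathbb{R}_+^{(\mathfrak{U})}$), take $\U=T$ as the uncertainty index set, and for each $t\in T$ define $G_t\colon X\to Z^\bullet$ by $G_t(x):=(\sigma_{U^1_t}(x)-U^2_t)_{U\in\mathfrak{U}}$. Each coordinate $x\mapsto\sigma_{U^1_t}(x)-U^2_t$ is finite-valued (since $U^1_t$ is $w^\ast$-compact), convex, and lsc (a supremum of continuous linear functionals, minus a constant); hence $\dom G_t=X$, $G_t$ is proper, and $\epi_S G_t=\bigcap_{U\in\mathfrak{U}}\{(x,z)\in X\times\mathbb{R}^{\mathfrak{U}}:\sigma_{U^1_t}(x)-z_U\le U^2_t\}$ is convex and closed, i.e.\ $G_t$ is $S$-convex and $S$-epi closed. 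Since $\{U_t:U\in\mathfrak{U}\}=\U_t$ for every $t$ (the sets $\U_t$ being nonempty), the system $G_t(x)\in-S$, $t\in T$, is precisely $\sigma_{\A}(x)\le b$ for all $(\A,b)\in\U_t$ and all $t\in T$; thus $({\rm RSAP}_c)$ is an instance of $({\rm RP}_c)$.

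Next I would compute, for $(t,\lambda)\in T\times\mathbb{R}_+^{(\mathfrak{U})}$: by positive homogeneity and additivity of support functions, $\sum_{U\in\supp\lambda}\lambda_U\sigma_{U^1_t}=\sigma_K$ with $K:=\sum_{U\in\supp\lambda}\lambda_U U^1_t$, so $(\lambda G_t)(\cdot)=\sigma_K(\cdot)-\sum_{U\in\supp\lambda}\lambda_U U^2_t$. Here $K$ is a \emph{finite} Minkowski sum of $w^\ast$-compact convex sets, hence itself $w^\ast$-compact, convex, and therefore $w^\ast$-closed, so that $\sigma_K^\ast=\delta_K$ and $\epi(\lambda G_t)^\ast=K\times\{\sum_{U\in\supp\lambda}\lambda_U U^2_t\}+\mathbb{R}_+(0_{X^\ast},1)$. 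Ranging over $\lambda$ and using convexity of each $U^1_t$ to merge terms indexed by distinct $U$'s sharing the same component $U_t$ (and conversely to split), one identifies $\bigcup_{\lambda\in\mathbb{R}_+^{(\mathfrak{U})}}\big(K\times\{\sum_U\lambda_U U^2_t\}\big)$ with $\co\cone\U_t$, under the identification $V\leftrightarrow V^1\times\{V^2\}$. Therefore $\mathcal{M}_0=\bigcup_{t\in T}[\co\cone\U_t+\mathbb{R}_+(0_{X^\ast},1)]=\bigcup_{t\in T}\co\cone[\U_t\cup\{(0_{X^\ast},1)\}]=\mathcal{R}_2$.

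Finally, $\inf_{x\in X}[\la c,x\ra+(\lambda G_t)(x)]=-\delta_K(-c)-\sum_{U\in\supp\lambda}\lambda_U U^2_t$, which equals $-\sum_{U\in\supp\lambda}\lambda_U U^2_t$ when $-c\in K=\sum_{U\in\supp\lambda}\lambda_U U^1_t$ and $-\infty$ otherwise; writing $-c=\sum_U\lambda_U v^1_U$ with $v^1_U\in U^1_t$ and putting $v_U=(v^1_U,U^2_t)\in U_t$, one sees that $({\rm RD}_c)$ of the resulting problem is exactly $({\rm RSAD}_c^2)$. The equivalence $({\rm g}_2)\Leftrightarrow({\rm h}_2)$ then follows from Theorem \ref{thm_StrD}. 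The hard part will be the moment-cone identification above: one must check that $K$ stays $w^\ast$-closed --- precisely where $w^\ast$-compactness of the sets $V^1$, rather than mere $w^\ast$-closedness as in the preceding corollary, is indispensable --- and that the passage between the $\mathbb{R}_+^{(\mathfrak{U})}$-indexed description and $\co\cone\U_t$ remains valid when several generalized-sequence indices $U$ share the same component $U_t\in\U_t$.
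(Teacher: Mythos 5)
Your proposal is correct and follows essentially the same route as the paper's proof: the same reduction of $({\rm RSAP}_c)$ to $({\rm RP}_c)$ with $Z=\mathbb{R}^{\mathfrak{U}}$, $S=\mathbb{R}_+^{\mathfrak{U}}$, $\U=T$ and $G_t=(\sigma_{U^1_t}(\cdot)-U^2_t)_{U\in\mathfrak{U}}$, the same identification $\mathcal{M}_0=\mathcal{R}_2$ and of the abstract dual $({\rm RD}_c)$ with $({\rm RSAD}_c^2)$, and the same appeal to Theorem \ref{thm_StrD}. The only (cosmetic) difference is where $w^*$-compactness is invoked: the paper uses it to get continuity of the support functions and then splits $\epi(\lambda G_t)^\ast$ term by term as a sum of epigraphs, whereas you fold the weighted sum into a single support function $\sigma_K$ with $K=\sum_{U\in\supp\lambda}\lambda_U U^1_t$ $w^*$-compact, hence $w^*$-closed --- the same computation in a slightly different order.
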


\begin{proof} 

Under the current assumption, $\sigma_{V^1}$ is continuous on $X$ for all $V = (V^1, V^2) \in\mathfrak{V}$.
Take $Z=\mathbb{R}^{\mathfrak{U}}$, $S=\mathbb{R}^{\mathfrak{U}}_+$, $\U=T$,   $G_t=(\sigma_{U^1_t}(.)-U^2_t)_{U\in \mathfrak{U}}$
 for all $t\in T$. Then the problem $({\rm RSAP}_c)$  turns to the  model    $({\rm RP}_c)$ and in   this setting, the moment cone $\mathcal{M}_0 $ becomes: 
\begin{align*}
\mathcal{M}_0&=\bigcup_{(t,\lambda)\in T\times \mathbb{R}_+^{(\mathfrak{U})}}\epi (\lambda G_t)^\ast
=\bigcup_{(t,\lambda)\in T\times \mathbb{R}_+^{(\mathfrak{U})}}\epi \left(\sum_{U\in \supp\lambda}\lambda_U \sigma_{U^1_t}(.)-U^2_t\right)^\ast\\
&=\!\!\!\!\!\! \bigcup_{(t,\lambda)\in T\times \mathbb{R}_+^{(\mathfrak{U})}}\!\!\sum_{U\in \supp\lambda}\!\!\!\!\lambda_U \epi \left(\sigma_{U^1_t}(.)-U^2_t\right)^\ast \!\!\!\!  =\!\!\!\!\bigcup_{(t,\lambda)\in T\times \mathbb{R}_+^{(\mathfrak{U})}}\!\!\sum_{U\in \supp\lambda}\!\!\!\!\lambda_U  \left[  U_t^1\times \{U_t^2\} +    \mathbb{R}_+ (0_{X^*},1)  \right]\\
&=\bigcup_{(t,\lambda)\in T\times \mathbb{R}_+^{(\mathfrak{U})}}\sum_{U\in \supp\lambda}\lambda_U\left[  U_t +    \mathbb{R}_+ (0_{X^*},1)  \right]
= \bigcup_{t\in T} \co\cone \left[ \U_t\cup \{(0_{X^*},1)\}\right]=\mathcal{R}_2. 
\end{align*}
Moreover, the dual problem of the new problem  $({\rm RP}_c)$ turns to be exactly  $({\rm RSAD}_c^2)$.
The conclusion now  follows from Theorem \ref{thm_StrD}.\end{proof}



From the above results on the (stable) robust  strong duality, one can use the same argument as the one in  Section 5 to get some versions of (stable) robust Farkas lemma for systems involved sub-affine functions with uncertain parameters.   Concretely, we can state the following robust Farkas lemmas for this class of systems and omit the proofs.

\begin{corollary}\label{corolRLAP1}
The following statements are equivalent:

\noindent
$({\rm i})$ For all $(c,s)\in X^\ast\times \mathbb{R}$, next assertions are equivalent:

$(\alpha'')$  $\sigma_{\A_t}(x)\le b_t,\;    \forall (\A_t,b_t)\in\U_t,\; \forall t\in T \, \Longrightarrow \la c, x\ra \geq s$. 

$(\beta'')$  $\exists \bar V\in \mathfrak{V},\; \exists(\bar x^\ast, \bar r)\in \bar V,\; \exists\bar\lambda\ge 0 :
\begin{cases}
 \bar\lambda \bar x^\ast=-c\\
\bar\lambda \bar r\le -s.
\end{cases}
$

\noindent 
$({\rm ii})$  $\cone \mathfrak{U}+\mathbb{R}_+(0_{X^\ast},1)$  is a closed and convex subset of $X^*\times \mathbb{R}$. 

\end{corollary}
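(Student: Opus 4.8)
The plan is to run the argument of Corollary~\ref{cor_6.1ff} almost verbatim, replacing the pair $({\rm RLIP}_c)$--$({\rm RLID}_c^1)$ by $({\rm RSAP}_c)$--$({\rm RSAD}_c^1)$ and replacing the appeal to Theorem~\ref{thm_3.2} (case $i=1$) by the appeal to the stable robust strong duality characterization for $({\rm RSAP}_c)$ proved just above, whose associated robust moment cone $\mathcal{M}_0$ equals precisely $\cone\mathfrak{U}+\mathbb{R}_+(0_{X^\ast},1)$, the cone appearing in $(ii)$. Thus, by that characterization, $(ii)$ is equivalent to: $\inf({\rm RSAP}_c)=\max({\rm RSAD}_c^1)$ for every $c\in X^\ast$. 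The only computation genuinely specific to the sub-affine setting -- the identity $\epi(\sigma_{V^1}(\cdot)-V^2)^\ast=V^1\times\{V^2\}+\mathbb{R}_+(0_{X^\ast},1)$ together with the ensuing formula for $\mathcal{M}_0$ -- has already been carried out there via \cite[Lemma 2.2]{GNg08}.

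I would first record, for every $(c,s)\in X^\ast\times\mathbb{R}$, the two reformulations used throughout: $(\alpha'')$ is equivalent to $\inf({\rm RSAP}_c)\ge s$ (it merely asserts $\langle c,x\rangle\ge s$ on the robust feasible set of $({\rm RSAP}_c)$), while $(\beta'')$ is equivalent to the existence of a feasible point of $({\rm RSAD}_c^1)$ with objective value $\ge s$ (a triple $(\bar V,(\bar x^\ast,\bar r),\bar\lambda)$ as in $(\beta'')$ is exactly such a point, with value $-\bar\lambda\bar r$). From these, and the weak duality $\sup({\rm RSAD}_c^1)\le\inf({\rm RSAP}_c)$ (valid as in~\eqref{eq_2.2ff}), the implication $(\beta'')\Rightarrow(\alpha'')$ holds with no extra hypothesis.

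For $[(ii)\Rightarrow(i)]$ I would assume the cone in $(ii)$ is closed and convex; then stable robust strong duality holds, so for any $(c,s)$ with $(\alpha'')$ true we get $\max({\rm RSAD}_c^1)=\inf({\rm RSAP}_c)\ge s$, hence $({\rm RSAD}_c^1)$ attains a value $\ge s$, i.e.\ $(\beta'')$; combined with the unconditional converse, this is the equivalence $(\alpha'')\Leftrightarrow(\beta'')$ of $(i)$. For $[(i)\Rightarrow(ii)]$ I would fix $c\in X^\ast$ and set $s:=\inf({\rm RSAP}_c)$. If $s\in\mathbb{R}$, then $(\alpha'')$ holds, so $(i)$ yields $(\beta'')$, i.e.\ $({\rm RSAD}_c^1)$ has a feasible point of value $\ge s$; with weak duality this forces $\max({\rm RSAD}_c^1)=\inf({\rm RSAP}_c)$. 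The cases $s=\pm\infty$ are disposed of exactly as in Corollary~\ref{cor_6.1ff}: if $s=-\infty$, weak duality makes $({\rm RSAD}_c^1)$ infeasible, so $\max=-\infty=\inf$ trivially, while $s=+\infty$ is ruled out by the standing feasibility of $({\rm RSAP}_c)$. Since $c$ was arbitrary, stable robust strong duality holds, and the cited characterization returns $(ii)$.

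The argument carries essentially no difficulty beyond bookkeeping: one must keep in mind that $(\beta'')$ asserts \emph{attainment} of the dual (an actual maximizer exists), not merely $\sup({\rm RSAD}_c^1)\ge s$, and must treat the extended-real values of $\inf({\rm RSAP}_c)$ case by case -- both handled as in the linear case of Section~5. All the substantive content is imported from the stable strong duality corollary for $({\rm RSAP}_c)$, which itself rests on Theorem~\ref{thm_StrD} and the moment-cone computation noted above.
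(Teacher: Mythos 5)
Your proposal is correct and is exactly the argument the paper intends: it omits the proof, remarking that one repeats the Section~5 reasoning (as in Corollary~\ref{cor_6.1ff}) with the pair $({\rm RSAP}_c)$--$({\rm RSAD}_c^1)$ and the stable strong duality characterization whose moment cone is $\mathcal{R}_1=\cone\mathfrak{U}+\mathbb{R}_+(0_{X^\ast},1)$, which is precisely what you do. Your extra care about dual attainment versus supremum and about the degenerate value $\inf({\rm RSAP}_c)=-\infty$ matches (indeed slightly sharpens) the treatment in Corollary~\ref{cor_6.1ff}, so no gap remains.
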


\begin{corollary}
The following statements are equivalent:

\noindent
$({\rm i})$ For all $(c,s)\in X^\ast\times \mathbb{R}$, next assertions are equivalent:

$(\alpha'')$  $\sigma_{\A_t}(x)\le b_t,\;    \forall (\A_t,b_t)\in\U_t,\; \forall t\in T \, \Longrightarrow \la c, x\ra \geq s$. 

$(\gamma'')$  $\exists  \bar t \in T,\; \exists (\bar x^\ast_U, \bar r_U)_{U\in \mathfrak{U}}\in (U_{\bar t})_{U\in\mathfrak{U}},\;  \bar \lambda\in \mathbb{R}_+^{(\mathfrak{U})} :\\
\null \ \ \ \ \ \ \ \ \sum\limits_{U\in \supp\bar\lambda} \bar\lambda_U\bar x^\ast_U=-c, \ \ \textrm{and} \ \ 
\sum\limits_{U\in \supp\bar\lambda} \bar\lambda_U\bar r_U\le -s.
$

\noindent 
$({\rm ii})$  $\bigcup_{t\in T} \co\cone \left[ \U_t\cup \{(0_{X^*},1)\}\right]$  is a closed and convex subset of $X^*\times \mathbb{R}$. 

\end{corollary}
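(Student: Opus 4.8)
The plan is to mimic the Farkas-type results of Section~5 (in particular the proof of Corollary~\ref{cor_6.1ff}, or \ref{cor_6.2ff}), substituting the stable robust strong duality result just established for the pair $({\rm RSAP}_c)-({\rm RSAD}_c^2)$ and its associated cone $\mathcal{R}_2=\bigcup_{t\in T}\co\cone[\U_t\cup\{(0_{X^\ast},1)\}]$. First, for a fixed $(c,s)\in X^\ast\times\mathbb{R}$ I would record the two reformulations: $(\alpha'')$ says precisely $\inf({\rm RSAP}_c)\ge s$, whereas $(\gamma'')$ says precisely that $({\rm RSAD}_c^2)$ admits a feasible triple $(\bar t,(\bar x^\ast_U,\bar r_U)_{U\in\mathfrak{U}},\bar\lambda)$ of objective value $-\sum_{U\in\supp\bar\lambda}\bar\lambda_U\bar r_U\ge s$. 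I would also note the weak duality $\sup({\rm RSAD}_c^2)\le\inf({\rm RSAP}_c)$, inherited from \eqref{eq_2.2ff} through the transformation used in the proof of the stable robust strong duality Corollary for $({\rm RSAP}_c)-({\rm RSAD}_c^2)$.

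For $[(ii)\Rightarrow(i)]$: if $\mathcal{R}_2$ is closed and convex, that Corollary gives $\inf({\rm RSAP}_c)=\max({\rm RSAD}_c^2)$ for all $c\in X^\ast$. Then $(\alpha'')$ forces $\max({\rm RSAD}_c^2)=\inf({\rm RSAP}_c)\ge s$ with attainment, so the attaining dual point yields $(\gamma'')$; conversely $(\gamma'')$ supplies a dual-feasible point of value $\ge s$, whence by weak duality $\inf({\rm RSAP}_c)\ge s$, i.e.\ $(\alpha'')$. For $[(i)\Rightarrow(ii)]$: fix $c$, put $s:=\inf({\rm RSAP}_c)$; when $s\in\mathbb{R}$, $(\alpha'')$ holds, so by $(i)$ does $(\gamma'')$, producing a dual-feasible point of value $\ge s$, which by weak duality has value exactly $s$, hence $\max({\rm RSAD}_c^2)=\inf({\rm RSAP}_c)$. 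The degenerate cases ($\inf({\rm RSAP}_c)=-\infty$, empty feasible set, unbounded dual) are dispatched exactly as in Section~5 --- e.g.\ a dual-feasible point of finite value $t_0$ together with $\inf({\rm RSAP}_c)=-\infty$ would contradict $(i)$ applied with $s=t_0$. Since $c$ is arbitrary, stable robust strong duality holds for $({\rm RSAP}_c)-({\rm RSAD}_c^2)$, and the Corollary returns $(ii)$.

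A cleaner alternative, which I would also mention, bypasses duality: take $Z=\mathbb{R}^{\mathfrak{U}}$, $S=\mathbb{R}_+^{\mathfrak{U}}$, $\U=T$ and $G_t=(\sigma_{U^1_t}(\cdot)-U^2_t)_{U\in\mathfrak{U}}$ (each $G_t$ is proper, being finite at $0$, and is $S$-convex and $S$-epi closed). Then $\{G_t(x)\in-S,\ \forall t\in T\}$ is exactly the hypothesis of $(\alpha'')$, the robust moment cone $\mathcal{M}_0$ satisfies $\oco\mathcal{M}_0=\oco\mathcal{R}_2$ (as in the proof of the $({\rm RSAP}_c)$-II duality Corollary), and $(\gamma'')$ is nothing but the assertion $(c,s)\in-\mathcal{R}_2$. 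Proposition~\ref{lem_FL} gives $(\alpha'')\Leftrightarrow(c,s)\in-\oco\mathcal{M}_0=-\oco\mathcal{R}_2$, so $(\alpha'')\Leftrightarrow(\gamma'')$ for all $(c,s)$ holds if and only if $\oco\mathcal{R}_2=\mathcal{R}_2$, i.e.\ $(ii)$.

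The only delicate points are bookkeeping, not substance. The main one is checking that $(\gamma'')$, with its choice of actual points $\bar x^\ast_U\in U^1_{\bar t}$ and a generalized finite sequence $\bar\lambda\in\mathbb{R}_+^{(\mathfrak{U})}$, genuinely coincides with $(c,s)\in-\bigcup_{t\in T}\co\cone[\U_t\cup\{(0_{X^\ast},1)\}]$: this uses convexity of each $V^1$ to merge repeated terms and surjectivity of the projection $\mathfrak{U}\to\U_{\bar t}$ to realize an arbitrary finite conic combination. The remaining issues --- the $\pm\infty$ and infeasibility degeneracies --- are handled routinely via weak duality. This is presumably why the paper states the corollary without proof.
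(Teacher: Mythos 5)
Your primary argument --- reformulating $(\alpha'')$ as $\inf({\rm RSAP}_c)\ge s$ and $(\gamma'')$ as the existence of a dual-feasible point of $({\rm RSAD}_c^2)$ with value $\ge s$, then running the proof of Corollary \ref{cor_6.1ff} with the stable robust strong duality corollary for the pair $({\rm RSAP}_c)-({\rm RSAD}_c^2)$ (cone $\mathcal{R}_2$) plus weak duality --- is precisely the argument the paper omits, so the proposal is correct and essentially the same as intended. Your alternative route via Proposition \ref{lem_FL} and $\oco\mathcal{M}_0=\oco\mathcal{R}_2$ is also sound; just note that, like the paper's own route, it tacitly inherits the $w^\ast$-compactness assumption on the sets $V^1$ from the preceding duality corollary, which the statement of this corollary does not repeat.
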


\noindent  {\it Duality for Linear Infinite Programming Problems.}   We now consider a special case of (RLIP$_c$):  the {\it linear infinite programming problems. } 
\begin{align}
({\rm LIP}_c)\quad &\inf\;  \langle c,x\rangle \notag\\
\textrm{s.t. }\ \  &x\in X,\;\langle a_t,x \rangle \le b_t,\;\forall t\in T,\notag
\end{align}
where $T$ is an arbitrary (possible infinite) index set, $c\in X^*$, $a_t \in X^*$, and $b_t \in \R$ for all $t \in T$.  In the case where $X =\mathbb{R}^n$ this problem is often known as linear semi-infinite problem  (see \cite{GL98} and also, \cite{CKL}, \cite{DW} for applications of this model in finance). 
x
We consider  $({\rm LIP}_c)$ in a new look:  a special case of $({\rm RLIP}_c)$ where all uncertainty sets $\U_t$, $t\in T$, are singletons for all $t \in T$, say,  $\U_t = \{(a_t,b_t)\}$, and then   $\mathscr{U} = \prod_{t\in T}\U_t  $ is also a singleton, say $
\mathscr{U} = \left\{ \Big( (a_t, b_t)\Big) _{ t \in T} \right\}$,   while  $\mathscr{V}=\{(a_t,b_t): t\in T\}$. 
We now have:

$\bullet$ All the three "robust" dual problems $({\rm RLID}_c^1)$, $({\rm RLID}_c^2)$, $({\rm RLID}_c^4)$ of the  problem $({ \rm LIP}_c)$ (considered as $({ \rm RLIP}_c)$)    collapse to  
\begin{align}
\hskip-1.5cm ({\rm LID}_c^1)\quad& \sup\; [ -\lambda b_t]\notag\\
\textrm{subject to  }\ \ \  & t\in T,\;\lambda\ge 0,\; c=-\lambda a_t, \notag
\end{align}
and in this situation, the three corresponding moments cones $\mathcal{N}_1$,   $\mathcal{N}_2$, and $\mathcal{N}_4$ reduce to  the moment cone   corresponding to the pair $({\rm LIP}_c) - ({\rm LID}_c^1)$: 
    $$ \mathcal{E}_1 :=\bigcup_{t\in T}\co\cone\{(a_t, b_t), (0_{X^*},1)\}.   $$

$\bullet$  All the three  "robust" dual problems $({\rm RLID}_c^3)$, $({\rm RLID}_c^6)$, $({\rm RLID}_c^8)$ of the new-formulated  problem $({\rm RLIP}_c)$ collapse to the next problem (which  is introduced in \cite{GL98} for the case where $X = \R^n$)
\begin{align}
({\rm LID}_c^2)\quad &\sup\;  \left[-\sum_{t\in \supp\lambda} \lambda_t b_t\right]\notag\\
\textrm{subject to  } \ \  &\lambda\in \mathbb{R}^{(T)}_+ ,\;  c=- \sum_{t\in \supp\lambda}\lambda_t a_t, \notag
\end{align}
and, in the same way as   above,  the three corresponding moments cones $\mathcal{N}_3$,   $\mathcal{N}_6$, and $\mathcal{N}_8$ reduce to moment cone corresponding to the pair   $({\rm LIP}_c) - ({\rm LID}_c^2):$ 
$$ \mathcal{E}_2:=\co\cone\{(a_t, b_t),t\in T; (0_{X^*},1)\} .$$

$\bullet$ All the three  dual problems $({\rm RLID}_c^5)$, $({\rm RLID}_c^7)$, $({\rm RLID}_c^9)$ of the resulting problem $({\rm RLIP}_c)$ reduce to:
$$\ \ \ \ \ ({\rm LID}_c^3)\qquad\sup_{\lambda\ge 0}  \inf_{x\in X} \sup_{t\in T}\Big[\la c,x\ra+\la \lambda a_t,x\ra -\lambda b_t\Big], $$
while the three robust moment cones $\mathcal{N}_5$,   $\mathcal{N}_7$, and $\mathcal{N}_9$ all reduce to the moment cone corresponding to  the pair  $({\rm LIP}_c) - ({\rm LID}_c^3):$ 
$$  \mathcal{E}_3    :=\cone\oco\{(a_t, b_t),t\in T; (0_{X^*},1)\}. $$

Moreover, for all $c\in X^\ast$,     one has   (see  Remark \ref{rem_4.3new}),  
\begin{equation}\label{eq_6.3bis}
\sup ({\rm LID}_c^1)\le 
\begin{array}{c}\sup ({\rm LID}_c^2)\\\sup ({\rm LID}_c^3)\end{array}
\le \inf ({\rm LIP}_c).
\end{equation}

As consequences of Theorems \ref{thm_3.2} and \ref{thm_3.2bis}, we have

\begin{corollary} [Principles of stable robust strong duality for $ {\rm(LIP}_c)$]\label{thm_2.1nww}
The following assertions are true.

\noindent ${\rm (i)} $ The next two statements are equivalent:

\hskip.5cm $\rm(e_1)$ $\mathcal{E}_1 $ is a closed and convex subset of $X^*\times \mathbb{R}$,

\hskip.5cm $\rm(f_1)$ The stable robust strong  duality holds for the pair $({\rm LIP}_c)-({\rm LID}^1_c)$, i.e., \\
\null \ \ \ \ \ \ \ \ \ $\inf({\rm LIP}_c)=\max ({\rm LID}_c^1)$ for all $c\in X^*$.
\smallskip

\noindent
${\rm (ii)} $   For each $i=2,3$, the following  statements are equivalent:

\hskip.5cm $\rm(e_i)$ $\mathcal{E}_i $ is a closed subset of $X^*\times \mathbb{R}$.

\hskip.5cm $\rm(f_i)$ The stable robust strong  duality holds for the pair $({\rm LIP}_c)-({\rm LID}^i_c)$.
\end{corollary}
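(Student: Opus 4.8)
The plan is to recognize Corollary \ref{thm_2.1nww} as the specialization of Theorems \ref{thm_3.2} and \ref{thm_3.2bis} to the degenerate situation where every uncertainty set $\U_t$ is a singleton $\{(a_t,b_t)\}$. The first step is to record the explicit identifications noted in the surrounding text: since $\mathscr{U}$ is then a singleton and $\mathscr{V}=\{(a_t,b_t):t\in T\}$, the nine robust dual problems collapse to the three problems $({\rm LID}_c^1)$, $({\rm LID}_c^2)$, $({\rm LID}_c^3)$, and the nine robust moment cones collapse to $\mathcal{E}_1$, $\mathcal{E}_2$, $\mathcal{E}_3$. Concretely, one checks $\mathcal{N}_1=\mathcal{N}_2=\mathcal{N}_4=\mathcal{E}_1$, $\mathcal{N}_3=\mathcal{N}_6=\mathcal{N}_8=\mathcal{E}_2$, and $\mathcal{N}_5=\mathcal{N}_7=\mathcal{N}_9=\mathcal{E}_3$; for instance $\mathcal{N}_1=\cone\mathscr{V}+\mathbb{R}_+(0_{X^*},1)=\cone\{(a_t,b_t):t\in T\}+\mathbb{R}_+(0_{X^*},1)$, which equals $\bigcup_{t\in T}\co\cone\{(a_t,b_t),(0_{X^*},1)\}$ because the union over $t$ of the rays already exhausts the generated convex cone once one adds $\mathbb{R}_+(0_{X^*},1)$; similarly $\mathcal{N}_2=\bigcup_{t\in T}\co\cone[\U_t\cup\{(0_{X^*},1)\}]$ becomes the same set since each $\U_t$ is the singleton $\{(a_t,b_t)\}$, and $\mathcal{N}_4$ reduces likewise once one observes $\sup_{v\in\U_t}[\la v^1,\cdot\ra-v^2]=\la a_t,\cdot\ra-b_t$ so that $\oco[\U_t+\mathbb{R}_+(0_{X^*},1)]=\{(a_t,b_t)\}+\mathbb{R}_+(0_{X^*},1)$, which is already $w^*$-closed and convex.

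For part (i), I would then invoke Theorem \ref{thm_3.2} with $i=1$ (equivalently $i=2$ or $i=4$): the statement $({\rm c}_1)$ — that $\mathcal{N}_1$ is closed and convex — becomes exactly $(e_1)$ — that $\mathcal{E}_1$ is closed and convex — and $({\rm d}_1)$ — stable robust strong duality for the pair $({\rm RLIP}_c)-({\rm RLID}_c^1)$ — becomes $(f_1)$, the stable strong duality for $({\rm LIP}_c)-({\rm LID}_c^1)$. Hence $[(e_1)\Leftrightarrow(f_1)]$ follows immediately. For part (ii) with $i=2$, I would apply Theorem \ref{thm_3.2bis} with $i=6$: here $\mathcal{N}_6$ coincides with $\mathcal{E}_2$ and the single hypothesis needed is closedness of $\mathcal{N}_6$ (convexity being automatic since the corresponding $\U$ is a singleton, per Proposition \ref{prop_conclo} and Remark \ref{rem_2.1}), giving $[(e_2)\Leftrightarrow(f_2)]$; alternatively one could route through $i=3$ of Theorem \ref{thm_3.2} after noting $\mathcal{N}_3$ is automatically convex here. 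For $i=3$, I would apply Theorem \ref{thm_3.2bis} with $i=7$, since $\mathcal{N}_7=\mathcal{E}_3$ and again only closedness is required, its convexity being guaranteed by Proposition \ref{convexity-N}(iv). The weak-duality chain \eqref{eq_6.3bis} is the specialization of Propositions \ref{pro_3.1ff}--\ref{pro_3.3gg} (and Remark \ref{rem_4.3new}) and needs no separate argument.

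The only genuine work, and the step most likely to contain a subtle point, is verifying the cone identifications — in particular checking that in the singleton case the various $\cone$, $\co\cone$, and $\cone\oco$ operations applied to the different index-set arrangements all produce the same set, and that the $\oco$ (closed convex hull) operations inside $\mathcal{N}_4,\mathcal{N}_5,\mathcal{N}_7,\mathcal{N}_8,\mathcal{N}_9$ are harmless because each set $\{(a_t,b_t)\}+\mathbb{R}_+(0_{X^*},1)$ is already $w^*$-closed and convex, so the outer closure only acts after taking the conical/convex hull over $t\in T$, exactly matching $\mathcal{E}_1,\mathcal{E}_2,\mathcal{E}_3$ respectively. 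Once these identifications are in hand, the corollary is a direct transcription of the two main theorems, so I would keep the proof short: state the identifications, cite Theorems \ref{thm_3.2} and \ref{thm_3.2bis} (with the relevant $i$), and note that convexity of $\mathcal{E}_2$ and $\mathcal{E}_3$ is automatic in the uncertainty-free setting, which is why only closedness appears in $(e_2)$ and $(e_3)$.
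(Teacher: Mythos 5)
Your proposal is correct and follows essentially the same route as the paper: the paper states this corollary as a direct consequence of Theorems \ref{thm_3.2} and \ref{thm_3.2bis}, after recording exactly the identifications you make (the duals collapsing to $({\rm LID}_c^1)$, $({\rm LID}_c^2)$, $({\rm LID}_c^3)$ and the cones $\mathcal{N}_i$ collapsing to $\mathcal{E}_1$, $\mathcal{E}_2$, $\mathcal{E}_3$ in the singleton-uncertainty case), using $i=1$ for part (i) and $i=6,7$ (where convexity is automatic) for part (ii). Your added remarks on why the $\oco$ operations are harmless only elaborate on identifications the paper asserts without proof, so nothing is missing.
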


\begin{remark}  It is clear that in this setting, one can specify  sufficient conditions   in Propositions \ref{convexity-N} and \ref{closedness-N} to guarantee  the convexity and closedness of the moment cones $\mathcal {E}_i$, $i = 1, 2, 3$,   and hence,  the stable robust strong  duality  for the pair $({\rm LIP}_c)-({\rm LID}^i_c)$ for $i = 1, 2, 3$ hold as well.  
\end{remark}

\noindent {\it Farkas-Type Results for  Linear Infinite  Systems.}    
Similar to what is done in the Section 5,  the duality results of the primal-dual pairs of problems $({\rm LIP}_c)-({\rm LID}^j_c)$,   $j = 1, 2, 3$ will give rise to some new  variants of generalized Farkas lemmas for linear infinite systems.  By this way,  it is easy to see that for the case  $j=2$ we will  get  a version  of Farkas lemma which goes  back to \cite[Corollary 3.1.2]{GL98} in the case where $X = \R^n$. 
In the next corollaries, we realize the process for $j=1$ and $j=3$, and to the best of our knowledge,  these resulting  versions of Farkas lemmas for linear infinite systems obtained here are new. Their  proofs are similar to those of Corollaries \ref{cor_6.1ff}-\ref{cor_6.4ff} and will be omitted.

\begin{corollary}[Farkas lemma for linear infinite systems I]      \label{corolLIP1}
The following statements are equivalent:

\noindent
$({\rm i})$ For all $(c,s)\in X^\ast\times \mathbb{R}$, next assertions are equivalent:

\hskip.5cm $(\alpha')$ $\langle a_t,x \rangle \le b_t,\;\forall t\in T \; \Longrightarrow\; \la c,x\ra \ge s$,

\hskip.5cm $(\beta')$ $ \exists \bar t\in T,\; \exists \bar \lambda \ge 0: 
\bar\lambda a_{\bar t}=-c\; \ \  \textrm{and} \; \ 
\bar\lambda b_{\bar t} \le -s,$

\noindent 
$({\rm ii})$  $\bigcup_{t\in T}\co\cone\{(a_t, b_t), (0_{X^*},1)\}$ is a closed and convex subset of $X^*\times \mathbb{R}$. 

\end{corollary}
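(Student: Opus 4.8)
The plan is to recognize Corollary~\ref{corolLIP1} as the specialization of Corollary~\ref{cor_6.1ff} (Robust Farkas lemma for linear system~I) to the uncertainty-free setting, in exactly the same way that ${\rm (LIP}_c)$ is obtained from $({\rm RLIP}_c)$ by taking each $\U_t = \{(a_t,b_t)\}$ a singleton. Under this reduction $\mathscr{U} = \{((a_t,b_t))_{t\in T}\}$ is a singleton, $\mathscr{V} = \{(a_t,b_t): t\in T\}$, and the dual $({\rm RLID}_c^1)$ collapses to $({\rm LID}_c^1)$ as noted before the statement. The key point is then to identify the two ingredients of Corollary~\ref{cor_6.1ff}: the cone condition and the logical equivalence of $(\alpha)$--$(\beta)$.

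First I would check that the moment cone $\mathcal{N}_1 = \cone\mathscr{V} + \mathbb{R}_+(0_{X^*},1)$ becomes, in this singleton setting, $\cone\{(a_t,b_t): t\in T\} + \mathbb{R}_+(0_{X^*},1)$. This equals $\mathcal{E}_1 = \bigcup_{t\in T}\co\cone\{(a_t,b_t),(0_{X^*},1)\}$ by the same elementary manipulation used repeatedly in Section~4 (distributing the translation by $\mathbb{R}_+(0_{X^*},1)$ inside the union and absorbing it into each $\co\cone$); indeed this identification is precisely the one recorded in Section~6 where $\mathcal{N}_1$, $\mathcal{N}_2$, $\mathcal{N}_4$ are all shown to reduce to $\mathcal{E}_1$. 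Hence condition $({\rm ii})$ of Corollary~\ref{corolLIP1} is literally condition $({\rm ii})$ of Corollary~\ref{cor_6.1ff} after this rewriting.

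Next I would translate the statement $(\beta)$ of Corollary~\ref{cor_6.1ff}. There, $(\beta)$ asserts the existence of $(\bar x^*,\bar r)\in\mathscr{V}$ and $\bar\lambda\ge 0$ with $\bar\lambda\bar x^* = -c$ and $\bar\lambda\bar r\le -s$. Since $\mathscr{V} = \{(a_t,b_t): t\in T\}$, choosing $(\bar x^*,\bar r)\in\mathscr{V}$ is exactly choosing an index $\bar t\in T$ with $(\bar x^*,\bar r) = (a_{\bar t}, b_{\bar t})$, so $(\beta)$ becomes verbatim the condition $(\beta')$: $\exists\bar t\in T$, $\exists\bar\lambda\ge 0$ with $\bar\lambda a_{\bar t} = -c$ and $\bar\lambda b_{\bar t}\le -s$. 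The constraint $(\alpha)$ is already stated over $\mathscr{V}$, which here reads $\langle a_t,x\rangle\le b_t$ for all $t\in T$, i.e.\ $(\alpha')$.

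Finally I would note that the hypothesis ``$\inf({\rm RLIP}_c) > -\infty$'' appearing in Corollary~\ref{cor_6.1ff} can be dropped here, since for the uncertainty-free problem the implication $(\alpha')\Rightarrow(\beta')$ is vacuous when $\inf({\rm LIP}_c) = -\infty$ (no finite $s$ can satisfy the hypothesis of $(\alpha')$ in a way that forces a finite lower bound beyond it); more carefully, one checks that $(\alpha')$ holds for some $(c,s)$ only if $\langle c,\cdot\rangle$ is bounded below on the feasible set by $s$, so whenever $(\alpha')$ is genuinely in play the relevant infimum is finite and Corollary~\ref{cor_6.1ff} applies directly. The main --- though modest --- obstacle is precisely this bookkeeping about the ``$>-\infty$'' proviso: I would want to confirm that removing it in the statement of Corollary~\ref{corolLIP1} is legitimate, which it is because the equivalence $(\alpha')\Leftrightarrow(\beta')$ is to hold \emph{for all} $(c,s)$, and for those $(c,s)$ where the primal value is $-\infty$ both sides fail (no such $s$ makes $(\alpha')$ true, and the weak duality forces $\sup({\rm LID}_c^1) = -\infty$ so no $\bar\lambda$ exists in $(\beta')$ with $\bar\lambda b_{\bar t}\le -s$ for all large $s$). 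Everything else is the routine substitution described above, so the proof is a one-line invocation of Corollary~\ref{cor_6.1ff} together with the identification $\mathcal{N}_1 = \mathcal{E}_1$, and this is why the authors may reasonably omit it.
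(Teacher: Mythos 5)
Your proposal is correct and is essentially the paper's own (omitted) argument: the authors state that the proof is obtained just as Corollaries \ref{cor_6.1ff}--\ref{cor_6.4ff}, i.e.\ by the singleton-uncertainty specialization $\U_t=\{(a_t,b_t)\}$ under which $({\rm RLID}_c^1)$ collapses to $({\rm LID}_c^1)$ and $\mathcal{N}_1$ becomes $\mathcal{E}_1=\bigcup_{t\in T}\co\cone\{(a_t,b_t),(0_{X^*},1)\}$, which is exactly your invocation of Corollary \ref{cor_6.1ff}. Your extra bookkeeping about dropping the proviso $\inf({\rm RLIP}_c)>-\infty$ is sound (under the standing feasibility assumption, if the value is $-\infty$ then $(\alpha')$ fails for every $s$, and since $(\beta')$ always implies $(\alpha')$ both assertions fail, so the equivalence holds trivially), a point the paper glosses over.
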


\begin{corollary} [Farkas lemma for linear infinite systems II]            \label{corolLIP3}
The following statements are equivalent:

\noindent
$({\rm i})$ For all $(c,s)\in X^\ast\times \mathbb{R}$, next assertions are equivalent:

\hskip.5cm $(\alpha')$ $\langle a_t,x \rangle \le b_t,\;\forall t\in T \; \Longrightarrow\; \la c,x\ra \ge S$,

\hskip.5cm $(\delta')$ $\exists \bar\lambda\ge 0: \big[\forall x\in X, \;\forall \varepsilon>0,\; \exists t_0\in T: \la c+\bar\lambda a_{t_0},x \ra -\bar\lambda b_{t_0}+\varepsilon\ge s\big],$

\noindent 
$({\rm ii})$  $\cone\oco\{(a_t, b_t),t\in T; (0_{X^*},1)\}$ is a closed subset of $X^*\times \mathbb{R}$. 
\end{corollary}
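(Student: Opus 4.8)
The plan is to run the same argument used for Corollary~\ref{cor_6.1ff}, but with the primal--dual pair $({\rm LIP}_c)$--$({\rm LID}_c^3)$ in place of $({\rm RLIP}_c)$--$({\rm RLID}_c^1)$, and with Corollary~\ref{thm_2.1nww}(ii) in the case $i=3$ playing the role that Theorem~\ref{thm_3.2} played there. Note first that $({\rm ii})$ is precisely the assertion that $\mathcal{E}_3:=\cone\oco\{(a_t,b_t),\,t\in T;\,(0_{X^*},1)\}$ is closed, which by Corollary~\ref{thm_2.1nww}(ii) is equivalent to the stable robust strong duality $\inf({\rm LIP}_c)=\max({\rm LID}_c^3)$ holding for all $c\in X^\ast$. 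So the corollary will follow once I match $(\alpha')$ and $(\delta')$ with the value relations for this pair. Fix $(c,s)\in X^\ast\times\mathbb{R}$. On the primal side, $(\alpha')$ says exactly that every feasible point of $({\rm LIP}_c)$ gives $\la c,x\ra\ge s$, i.e.\ $\inf({\rm LIP}_c)\ge s$. On the dual side I would unpack $(\delta')$: for a fixed $\bar\lambda\ge0$ and a fixed $x$, the clause ``$\forall\varepsilon>0,\ \exists t_0:\ \la c+\bar\lambda a_{t_0},x\ra-\bar\lambda b_{t_0}+\varepsilon\ge s$'' is simply $\sup_{t\in T}[\la c,x\ra+\la\bar\lambda a_t,x\ra-\bar\lambda b_t]\ge s$, and quantifying over all $x$ turns $(\delta')$ into ``$\exists\bar\lambda\ge0:\ \inf_{x\in X}\sup_{t\in T}[\la c,x\ra+\la\bar\lambda a_t,x\ra-\bar\lambda b_t]\ge s$'', i.e.\ $\sup({\rm LID}_c^3)\ge s$ with the supremum attained.

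For the direction $[({\rm ii})\Rightarrow({\rm i})]$, assume $\mathcal{E}_3$ is closed, so the stable robust strong duality $\inf({\rm LIP}_c)=\max({\rm LID}_c^3)$ holds for every $c\in X^\ast$. Now fix $(c,s)$. If $(\alpha')$ holds then $\inf({\rm LIP}_c)\ge s$, hence $\max({\rm LID}_c^3)\ge s$ and some $\bar\lambda\ge0$ realizes this value; by the translation above this is exactly $(\delta')$. Conversely, if $(\delta')$ holds then $\sup({\rm LID}_c^3)\ge s$, and the weak duality \eqref{eq_6.3bis} gives $\inf({\rm LIP}_c)\ge\sup({\rm LID}_c^3)\ge s$, i.e.\ $(\alpha')$. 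This establishes the equivalence in~$({\rm i})$.

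For the direction $[({\rm i})\Rightarrow({\rm ii})]$, I would first show that the stable robust strong duality holds for $({\rm LIP}_c)$--$({\rm LID}_c^3)$ and then invoke Corollary~\ref{thm_2.1nww}(ii). Fix $c\in X^\ast$ and set $s:=\inf({\rm LIP}_c)$; by the standing feasibility assumption $s<+\infty$, and the value $s=-\infty$ is disposed of separately (for each real $\rho$, $(\alpha')$ with $s=\rho$ fails, so by~$({\rm i})$ $(\delta')$ fails for all $\bar\lambda$, which forces $\inf_x\sup_t[\,\cdot\,]=-\infty$ for every $\bar\lambda$, hence $\sup({\rm LID}_c^3)=-\infty=\inf({\rm LIP}_c)$, exactly as in the earlier Farkas corollaries). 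With $s=\inf({\rm LIP}_c)\in\mathbb{R}$, $(\alpha')$ holds, so by~$({\rm i})$ $(\delta')$ holds, producing $\bar\lambda\ge0$ with $\inf_x\sup_t[\la c,x\ra+\la\bar\lambda a_t,x\ra-\bar\lambda b_t]\ge s$; combined with \eqref{eq_6.3bis} this pins $\max({\rm LID}_c^3)=s=\inf({\rm LIP}_c)$. Since $c$ was arbitrary, the stable robust strong duality holds, and Corollary~\ref{thm_2.1nww}(ii) yields the closedness of $\mathcal{E}_3$, which is~$({\rm ii})$.

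The only delicate point, and the one step I would write out in full, is the dual-side translation of $(\delta')$: one must convert the nested quantifier ``$\forall x\,\forall\varepsilon\,\exists t_0$'' into ``$\inf_x\sup_t\ge s$ attained at $\bar\lambda$'' without an off-by-$\varepsilon$ slip, and use ``attained'' consistently with Definition~\ref{robust-dual}, so that a $\bar\lambda$ whose value is $\ge s$, together with weak duality, is forced to realize the exact dual optimum. No new topological work is needed: the convexity/closedness content is entirely absorbed into Corollary~\ref{thm_2.1nww}(ii), and the remaining bookkeeping is identical to that already carried out in Corollaries~\ref{cor_6.1ff}--\ref{cor_6.4ff}.
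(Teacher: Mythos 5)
Your proposal is correct and follows exactly the route the paper intends: the paper omits this proof, stating it is analogous to Corollaries \ref{cor_6.1ff}--\ref{cor_6.4ff}, and you reproduce that argument with the right ingredients (Corollary \ref{thm_2.1nww}(ii) for the pair $({\rm LIP}_c)$--$({\rm LID}_c^3)$, the translation of $(\delta')$ into ``some $\bar\lambda\ge 0$ has dual value $\ge s$'', and the weak duality \eqref{eq_6.3bis}). Your careful unpacking of the ``$\forall x\,\forall\varepsilon\,\exists t_0$'' quantifiers and of the attainment issue, including the $\inf({\rm LIP}_c)=-\infty$ case, matches the paper's treatment in Corollary \ref{cor_6.1ff}.
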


\vskip-.5cm 
\section{Appendices} 
{\bf Appendix A.} {\it Proof of Proposition  \ref{convexity-N}. } 
 (i)  From the proof of Theorem \ref{thm_3.2} for the case $i=1$, we can see that the problem $({\rm RLIP}_c)$ can be transformed to $({\rm RP}_c)$ with $Z=\mathbb{R}$, $S=\mathbb{R}_+$, $\U=\mathscr{V}$, $G_v(.)= v^1(.)-v^2$ for all $v\in \mathscr{V}$, and in such a case, $\mathcal{M}_0=\mathcal{N}_1$.
Observe  that the  functions $v\mapsto \la v^1,x\ra -v_2$, $x\in X$, are concave (actually, they are affine). Together with the the fact that   $\mathscr{V}$ is convex and  $Z=\mathbb{R}$,  the collection $(v\mapsto G_v(x))_{x\in X}$ is  {uniformly $\mathbb{R}_+$-concave}. So, in the light of Proposition  \ref{prop_conclo}(i), $\mathcal{M}_0$ is convex, and  so is $\mathcal{N}_1$.  

\noindent
{ (ii)} By the same argument as above, to prove $\mathcal{N}_3$ is convex, it is sufficient to show that   the collection $(u\mapsto G_u(x))_{x\in X}$ is  {uniformly $\mathbb{R}_+^{(T)}$-concave} with $\U=\mathscr{U}$, $Z=\mathbb{R}^T$ and $G_u(.)=(\la u^1_t,.\ra-u^2_t)_{t\in T}$ for all $u\in \mathscr{U}$ (the setting in the proof of Theorem \ref{thm_3.2} for the case $i=3$).
Now, take arbitrarily $\lambda,\mu\in \mathbb{R}^{(T)}_+$ and  $u,w\in \mathscr{U}$. Let $\bar \lambda\in  \mathbb{R}^{(T)}_+$ and $\bar u\in \mathscr{U}$ such that $\bar \lambda_t=\lambda_t+\mu_t$, $\bar u^2_t=\min\{u^2_t, w^2_t\}$ and
$$\bar u_t^1=\begin{cases}
\frac{1}{\bar \lambda_t}(\lambda_t u^1_t+\mu_t w^1_t),&\textrm{if } \lambda_t+\mu_t\ne 0\\
u^1_t,&\textrm{otherwise}
\end{cases}$$
($\bar u \in\mathscr{U}$ as $\{x^\ast\in X^*: (x^\ast, r)\in \U_t\}$ is convex for all $t\in T$). Then, it is easy to check that
$$\lambda_t(\la u^1_t,x\ra -u^2_t)+\mu_t(\la w^1_t,x\ra -w^2_t)\le\bar \lambda_t (\la \bar u^1_t,x\ra-\bar u^2_t),\quad\forall t\in T,\; \forall x\in X,$$
and consequently, 
\begin{equation*}
\sum_{t\in T}\lambda^1_t(\la u^1_t,x\ra -u^2_t)+\sum_{t\in T}\lambda^2_t(\la w^1_t,x\ra -w^2_t)\le\sum_{t\in T}\bar \lambda_t (\la \bar u^1_t,x\ra-\bar u^2_t),\quad \forall x\in X, 
\end{equation*}
which means   $\lambda G_{u}(x)+\mu G_{w}(x)\le \bar\lambda G_{\bar u}(x)$ for all $x\in X$, yielding   the  uniform $\mathbb{R}_+^{(T)}$-concavity of the      collection $(u\mapsto G_u(x))_{x\in X}$.  The conclusion now follows from Proposition  \ref{prop_conclo}(i).

\noindent
{ (iii)} Recall that  $\mathcal{N}_4$ is a specific form of $\mathcal{M}_0$ with $Z=\mathbb{R}$, $S=\mathbb{R}_+$, $\U=T$, and $G_t(.)= \sup_{v\in\U_t} [\la v^1, .\ra-v^2]$ for all $t\in T$  (the setting in the proof of Theorem \ref{thm_3.2} for the case $i=4$).
Now, for each $t\in T$ and $x\in X$, as  $\U_t=\U^1_t\times\U^2_t$ (with $\U^1_t\subset X^*$ and $\U^2_t\subset \mathbb{R}$), it holds 
$$G_t(x)=\sup_{x^\ast\in \U^1_t} \langle x^\ast,x\rangle -\inf_{r\in \U^2_t}r=\sup_{x^\ast\in \U^1_t} \langle x^\ast,x\rangle- \inf \U^2_t.$$
So, for all $x\in X$, because $T$ is convex,   $t\mapsto \sup_{x^\ast\in \U^1_t} \la x^\ast, x\ra$ is affine, and $t\mapsto \inf \U^2_t$ is convex, the function $t\mapsto G_t(x)$ is concave.
This accounts for  the    uniform $\mathbb{R}^{(T)}_+$-concavity of the     collection $(t\mapsto G_t(x))_{x\in X}$. The conclusion again follows from Proposition  \ref{prop_conclo}(i).

{ (iv)} Consider the ways of transforming $({\rm RLIP}_c)$ to $({\rm RP}_c)$ in the proofs of Theorem \ref{thm_3.2bis}  for the case $i=6,7$.  Note  that, in these ways, the uncertain set $\U$ is always a singleton. So, the corresponding qualifying sets (i.e, $\mathcal{N}_6$ and  $\mathcal{N}_7$)  are always convex   (see Remark \ref{rem_2.1}).  \qquad \hfill $\square$

{\bf Appendix B.} {\it Proof of Proposition \ref{closedness-N}. }
Recall that $\mathcal{N}_i$, $i=1,2,\ldots,7$, are  specific forms of $\mathcal{M}_0$ following the corresponding ways transforming of $({\rm RLIP}_c)$ to $({\rm RP}_c)$ considered in the proofs of Theorems \ref{thm_3.2} and   \ref{thm_3.2bis}.   So, to prove that $\mathcal{N}_i$ is closed, we make use of  Proposition   \ref{prop_conclo}(ii), which  provides some  sufficient condition for the closedness of the robust moment cone $\mathcal{M}_0$.

\noindent
{ (i)}      For $i = 1$, let us consider the way of transforming $({\rm RLIP}_c)$ to $({\rm RP}_c)$ by  setting $Z=\mathbb{R}$, $S=\mathbb{R}_+$, $\U=\mathscr{V}$, and $G_v(.)= \la v^1, .\ra-v^2$ for all $v \in\mathscr{V}$.  
For all $x\in X$, it is easy to see that the function $v\mapsto G_v(x)=\la v^1,x\ra -v^2$ is continuous, and hence, it is $\mathbb{R}^+$-usc (see Remark \ref{rem_2.1eeee}(iii)). Moreover,   $\gph\mathscr{U}$ is compact, $\mathbb{R}$ is normed space, and \eqref{eq_4.2zab} ensures the fulfilling of condition $(C_0)$ in Proposition   \ref{prop_conclo}. The closedness of $\mathcal{N}_1$ follows from  Proposition   \ref{prop_conclo}(ii).

\noindent
{ (ii)} For $i = 4$, consider the way of transforming with the setting $Z=\mathbb{R}$, $S=\mathbb{R}_+$, $\U=T$, and $G_t(.)= \sup_{v\in\U_t} [\la v^1, .\ra-v^2]$ for all $t\in T$. 
One has that $\U=T$ is a compact set, that $t\mapsto G_t(x)= \sup_{v\in \U_t} [\langle v^1, x\rangle-v^2]$ is usc and hence, it  is $\mathbb{R}^+$-usc, and that Slater-type condition $(C_0)$ holds (as \eqref{eq_4.3zab} holds). The conclusion now follows from  Proposition   \ref{prop_conclo}(ii).

\noindent 
{ (iii)} Consider the way of transforming which corresponds to $i=5$, i.e.,  we consider  $Z=\mathbb{R}$, $S=\mathbb{R}_+$, $\U=\mathscr{U}$, and  $G_u(.)= \sup_{t\in T}[\la u^1_t, .\ra-u^2_t]$  for all $u\in \mathscr{U}$.  As $\mathscr{U}=\prod_{t\in T} \U_t$,  the assumption that $\U_t$ is compact for all $t\in T$ which entails the compactness of  $\mathscr{U}$. The other assumptions ensure the fulfillment of conditions  in  Proposition   \ref{prop_conclo}(ii) and the conclusion follows  from this very proposition.   

\noindent 
{ (iv)}  For $ i = 7$, using  the same argument as above in   transforming  $({\rm RLIP}_c)$ to $({\rm RP}_c)$    in the proof of Theorems  \ref{thm_3.2bis}. As by  this way,   the uncertainty set is a singleton, and hence, $\mathcal{N}_7$ is convex   (see Remark \ref{rem_2.1}). Now from Proposition   \ref{prop_conclo}(ii),    
Slater-type condition  ensures the closedness of the robust moment cone  $\mathcal{N}_7$, as desired. 
  \qquad \hfill $\square$

\end{document}